\documentclass[final]{siamart220329}
\usepackage{hyperref}
\usepackage{epsfig}
\usepackage{amsfonts}
\usepackage{algorithm}
\usepackage{algorithmic}
\usepackage{graphicx}
\usepackage{subfigure}
\usepackage{array}
\usepackage{enumerate}
\usepackage{enumitem}
\usepackage{rotating}
\usepackage[T1]{fontenc}
\AtBeginDocument{
    \hypersetup{colorlinks,urlcolor=black}
}

\usepackage{pgfplots,tikz}

\newdimen\prevdp
\def\leftlabel#1{\noalign{\prevdp=\prevdepth
		\kern-\prevdp\nointerlineskip\vbox to0pt{\vss\hbox{#1}}\kern\prevdp}}

\newcommand {\C}        {{\mathbb{C}}}
\newcommand {\R}        {{\mathbb{R}}}

\newcommand{\norm}[1]{\left\Vert#1\right\Vert}
\newcommand{\abs}[1]{\left\vert#1\right\vert}

\newcommand {\cL}        {{\mathcal{L}}}
\newcommand {\cV}        {{\mathcal{V}}}


\newtheorem{assumption}[theorem]{Assumption}

\newcommand {\ri}	{{\mathrm i}}

\DeclareMathOperator{\Col}{Col}

\DeclareMathOperator{\Real}{Re}
\DeclareMathOperator{\Imag}{Im}

\DeclareMathOperator{\orth}{orth}




\usepackage{todonotes}

\bibliographystyle{siamplain}

\headers{N. Aliyev and E. Mengi}{Large-Scale Pseudospectral Abscissa Minimization}

\title{Large-Scale Minimization of the Pseudospectral Abscissa}
\author{
Nicat Aliyev\thanks{Czech Technical University, Department of Instrumentation and Control, Technicka 4, 16607, Prague, Czech Republic  \href{mailto:nijat.aliyev@cvut.cz}{(nijat.aliyevt@gmail.com)}.} \and 
Emre Mengi\thanks{Ko\c{c} University, Department of Mathematics, Rumeli Feneri Yolu 34450, Sar{\i}yer, Istanbul, Turkey
\href{mailto:emengi@ku.edu.tr}{(emengi@ku.edu.tr)}.}}
\date{\today}

\pagestyle{myheadings}
\thispagestyle{plain}

\begin{document}
\maketitle

\begin{abstract}
\noindent
This work concerns the minimization of the pseudospectral abscissa of a matrix-valued function
dependent on parameters analytically. The problem is motivated by robust stability and 
transient behavior considerations for a linear control system that has optimization
parameters. We describe a subspace procedure to cope with the setting when the matrix-valued
function is of large size. The proposed subspace procedure solves a sequence of reduced problems
obtained by restricting the matrix-valued function to small subspaces, whose dimensions increase
gradually. It possesses desirable features such as a superlinear convergence
exhibited by the decay in the errors of the minimizers of the reduced problems. 
In mathematical terms, the problem we consider is a large-scale
nonconvex minimax eigenvalue optimization problem such that the eigenvalue function
appears in the constraint of the inner maximization problem. Devising and analyzing 
a subspace framework for the minimax eigenvalue optimization problem at hand with the eigenvalue
function in the constraint require special treatment that makes use of a Lagrangian and dual variables.
There are notable advantages in minimizing the pseudospectral abscissa over maximizing the
distance to instability or minimizing the $\mathcal{H}_\infty$ norm; the optimized pseudospectral
abscissa provides quantitative information about the worst-case transient growth, and the initial
guesses for the parameter values to optimize the pseudospectral abscissa can be arbitrary, unlike the case 
to optimize the distance to instability and $\mathcal{H}_\infty$ norm that would normally require initial 
guesses yielding asymptotically stable systems.

\end{abstract}

\begin{keywords}
pseudospectral abscissa, large scale, subspace framework, 
Lagrangian, robust stability, eigenvalue optimization, nonconvex optimization
\end{keywords}

\begin{AMS}
65F15, 93C05, 93D09, 90C26, 90C47
\end{AMS}

\medskip

\section{Introduction}\label{sec:introduction}
The minimization of the spectral abscissa of a linear control system has drawn interest
in the last couple of decades \cite{Burke2002, Apkarian2006}. A classical problem that can
be tackled using the spectral abscissa minimization is the stabilization by static output feedback (SOF) problem; 
given matrices $A \in {\mathbb C}^{n\times n}$, $B \in {\mathbb C}^{n\times m}$, $C\in {\mathbb C}^{p\times n}$, 
find a controller $K \in {\mathbb C}^{m\times p}$ such that the system $x'(t) = (A + BKC) x(t)$ is 
asymptotically stable, equivalently $A + BKC$ has all of its eigenvalues on the open left-half of
the complex plane. SOF is known to be a notoriously difficult problem \cite{Blondel1995}. Indeed,
it has been shown that SOF when the entries of $K$ are subject to box constraints is NP-hard
\cite{Blondel1997, Nemirovski1993}. Mathematically, the spectral abscissa minimization is a nonconvex
eigenvalue optimization problem that involves the minimization of the real part of the rightmost
eigenvalue. Nonsmoothness at a locally optimal point minimizing the spectral abscissa can 
occur due to possible nonsimplicity of the rightmost eigenvalue at the local optimizer \cite{Burke2001, Burke2001a},
as well as due to the existence of multiple rightmost eigenvalues with the same real part; see, e.g., 
the Turbo generator example in \cite{Burke2003b}.
Numerically speaking a bigger challenge is the non-Lipschitz nature of the rightmost eigenvalue at 
a local optimizer; when the rightmost eigenvalue is not simple at a local optimizer,
it can change rapidly near the optimizer. In more formal terms, the spectral abscissa does not
have to be Lipschitz continuous, not even locally, at a local optimizer, and this causes numerical
difficulties to numerical algorithms, which are at least prone to rounding errors. A second difficulty
with minimizing the spectral abscissa is that, even if a negative spectral abscissa guarantees an asymptotic
decay, the system can still exhibit significant transient growth before the eventual decay.

As a remedy to these problems with the spectral abscissa minimization, the pseudospectral abscissa
minimization has been considered in the last two decades \cite{Burke2003b,Apkarian2006b}.
Recall that the spectral abscissa of a matrix $A \in {\mathbb C}^{n\times n}$ is given by
$\alpha(A) := \max \{ \text{Re}(z) \; | \; z \in \Lambda(A) \}$, where $\Lambda(\cdot)$ denotes the spectrum
(i.e., the set of all eigenvalues) of its matrix argument. 
On the other hand, for a given real number $\epsilon>0$, the \emph{$\epsilon$-pseudospectrum} 
of $A$, which we denote by $\Lambda_{\epsilon}(A)$, 
consists of eigenvalues of all matrices within an $\epsilon$-neighborhood of $A$, formally defined as
\begin{equation*}
	\Lambda_{\epsilon}(A)
			\;	:=	\;
	\left\{
			z	\in	\C 
				\; | \; 
		z \in \Lambda(A+\Delta) \;\; 
				\exists \Delta\in\C^{n \times n} \; \text{ s.t. } \norm{\Delta}_2 \leq \epsilon \right\},
\end{equation*}
and the \emph{$\epsilon$-pseudospectral abscissa} of $A$ is defined as 
$\alpha_{\epsilon}(A) := \max\left\{\Real(z) \; | \; z\in \Lambda_{\epsilon}(A) \right\}$,
that is as the real part of the rightmost point in $\Lambda_{\epsilon}(A)$ \cite{Trefethen1999, Trefethen2005}.

Unlike the spectral abscissa, the pseudospectral map $A \mapsto \alpha_{\epsilon}(A)$ is locally Lipschitz 
continuous \cite[Corollary 7.2]{Lewis2008}, \cite[Corollary 3.4]{Gurbuzbalaban2012}. 
Moreover, if $\alpha_{\epsilon}(A)$ is negative, the system at hand as well as all nearby
systems at a distance of $\epsilon$ are asymptotically stable. 
This is significant especially if the entries of the matrix are uncertain.
A second virtue in using the pseudospectral
abscissa rather than the spectral abscissa is that, if it is sufficiently small, not only the asymptotic decay
but also a nice transient behavior of the solution of the autonomous system $x'(t) = A x(t)$ 
may be possible, thanks to the Kreiss-matrix theorem \cite{Trefethen2005, Kreiss1962}. 
Equivalent and computationally plausible 
characterizations for $\Lambda_\epsilon(A)$ and $\alpha_\epsilon(A)$ are given by \cite{Trefethen2005}
\begin{equation}\label{eq:psa_chars}
\begin{split}
	& \Lambda_{\epsilon}(A)	\;	=	\;	\left\{z\in\C \; | \; \sigma_{\min}(A-zI)\leq \epsilon \right\},	\\[.1em]
	& \alpha_{\epsilon}(A)	\;	=	\;	\max\left\{\Real(z) \; | \; z \in {\mathbb C}  \;\: \text{s.t.} 	\;\:
														\sigma_{\min}(A-zI)\leq \epsilon \right\},
\end{split}
\end{equation}
where $\sigma_{\min}(\cdot)$ denotes the smallest singular value of its matrix argument.
We remark that $\alpha_{\epsilon}(A)$, just like the spectral abscissa,
is usually a nonsmooth function of the entries of $A$
when the rightmost point in $\Lambda_{\epsilon}(A)$ is not unique, or the singular
value $\sigma_{\min}(A-\widetilde{z}I)$ is not simple at a rightmost point $\widetilde{z}$
of $\Lambda_{\epsilon}(A)$.

The computation of the $\epsilon$-pseudospectral abscissa and its derivatives require more work compared
to that for the spectral abscissa, yet there are very good algorithms to compute
the $\epsilon$-pseudospectral abscissa. The criss-cross algorithm developed by
Burke, Lewis and Overton is globally convergent at a quadratic rate, hence computes
the pseudospectral abscissa very reliably and efficiently for small- to medium-size matrices 
\cite{Burke2003, Overton2003}. For the pseudospectral abscissa of larger-size matrices, 
there is a fixed-point iteration developed by Guglielmi et al. \cite{Guglielmi2011}, which can
further be accelerated with the subspace framework in \cite{Kressner2014}.
We also refer to \cite{Benner2019} for improvements on the quadratically convergent
criss-cross algorithm, and its extension to spectral value sets.

\subsection{Problem and Contributions}
In this work, 
we assume we are given a matrix-valued function  $A : \Omega \rightarrow \C^{n \times n} $ of the form
\begin{equation}\label{eq:affine_mvf}
	A(x) = f_1(x)A_1+\dots+f_{\kappa}(x)A_{\kappa}
\end{equation}
dependent on the parameters $x$,
where $A_1,\dots,A_{\kappa}\in\C^{n \times n}$, the size of the matrices $n$ is very large, and 
$f_1,\dots,f_{\kappa}:\Omega\rightarrow \R$ are real-analytic on $\Omega$, a nonempty, open subset of $\R^d$
representing the permissible values for the parameters. We deal with the $\epsilon$-pseudospectral abscissa
minimization problem, for a prescribed $\epsilon > 0$, which is the minimax problem
\begin{equation}\label{min_full}
	\begin{split}
		\min_{x\in\underline{\Omega}} \: \alpha_{\epsilon}(A(x)) 
			& \;\; = \;\; \min_{x\in\underline{\Omega}} \: \max\left\{\Real(z) \; | \;   z\in \Lambda_{\epsilon}(A(x)) \right\} \\
			&\;\; = \;\; \min_{x\in\underline{\Omega}} \: \max\left\{\Real(z) \;  |  \;  
								\; z \in {\mathbb C}  \;\: \text{s.t.} 	\;\:   \sigma_{\min}(A(x)-zI)\leq \epsilon \right\}
	\end{split}
\end{equation}
over a compact, convex subset $\underline{\Omega}$ of $\Omega$ with nonempty interior. 
We describe a subspace procedure 
that reduces the size of $A$ considerably but without altering the optimal parameter values, and argue
that the proposed subspace procedure possesses desirable convergence properties in theory such as global convergence
and a superlinear rate of convergence. As a result, it enables us to solve the $\epsilon$-pseudospectral
abscissa minimization problems involving parameter dependent matrices with sizes on the order of
thousands.

The stabilization by static output feedback problem
for given $A\in\C^{n\times n}$, $B\in\C^{n\times m}$, $C\in\C^{p\times n}$ can be treated in a robust way
by  minimizing $\alpha_{\epsilon}(A + BKC)$ 
over $K \in \C^{m\times p}$ with entries constrained to lie in prescribed intervals. 
If the minimal value of $\alpha_{\epsilon}(A + BKC)$ is negative, not only the system but also nearby systems
are stabilizable.
Such a stabilization
problem falls into the setting of (\ref{min_full}), as $A + BKC$ can be represented in the form (\ref{eq:affine_mvf}). 
Variants of Newton's Method, especially BFGS, with proper line-search have been successfully
applied to such nonsmooth problems recently. The difficulty is that these smooth optimization techniques 
converge at a linear rate at best on nonsmooth problems. They would require the computation of
$\alpha_{\epsilon}(A + BKC)$ in the objective quite a few times, which makes them prohibitively
expensive in the large-scale setting. The package HANSO  \cite{Hanso} is based on a hybrid method that
makes use of both BFGS and a gradient sampling algorithm \cite{Burke2002}.
A specialization of this hybrid method for ${\mathcal H}_\infty$ controller design 
called HIFOO \cite{Burke2006, Gumussoy2009, Arzelier2011}
is applicable for the minimization of the spectral abscissa and pseudospectral
abscissa, as well as maximizing the distance to instability of $A + BKC$ over $K$.  
The more recent package GRANSO \cite{Curtis2017} is also based on BFGS, and is 
a variant of HANSO that can cope with box constraints. The downsides of all of these
methods are that they converge locally, and they are not well-suited for large-scale problems.
Locally-convergent bundling techniques and spectral bundle methods 
\cite{Apkarian2006b, Apkarian2008, Apkarian2006} are also used for solving problems 
related to SOF.

The problem at hand is a nonconvex large-scale minimax eigenvalue optimization problem. What makes
it peculiar compared to our previous works \cite{Mengi2018, Aliyev2020} is that the eigenvalue function
appears in the constraint of the inner maximization problem. This is in contrast to \cite{Mengi2018} and \cite{Aliyev2020} 
that introduce subspace frameworks for large-scale minimax eigenvalue optimization problems -
specifically for the maximization of the distance to instability and minimization of the ${\mathcal H}_\infty$-norm,
respectively - where the eigenvalue functions appear in the objective. Designing a subspace framework
for a minimax problem with the eigenvalue function in the constraint, and analyzing its convergence require 
a special treatment. For instance, when establishing the superlinear convergence of the proposed framework,
we work on the Lagrangian as well as the dual variable as much as the primal variables. 

Minimizing the $\epsilon$-pseudospectral abscissa of $A(x)$, that is the problem in (\ref{min_full}), 
and maximizing the distance to instability of $A(x)$ (more generally minimizing the
${\mathcal H}_\infty$-norm for a linear time invariant system depending on parameters) 
are motivated by similar robust stability and transient behavior considerations. 
However, there are advantages in 
optimizing the pseudospectral abscissa. First, by minimizing the pseudospectral abscissa, 
we simultaneously minimize a concrete lower bound on the largest transient growth possible. 
Secondly, when maximizing the distance to instability
or minimizing the ${\mathcal H}_\infty$ norm, the system with the initial guess 
for the parameter values should 
ideally be asymptotically stable, and finding such a guess may be a challenge. When 
minimizing the pseudospectral abscissa, it does not matter to start with 
parameter values leading to systems not asymptotically stable.

\subsection{Outline}
We introduce the subspace framework in the next section, then investigate its properties such
as the derivatives of the original and reduced problems, and, consequently, deduce Hermite interpolation
properties between the original and reduced problems. The proposed framework produces a
sequence of reduced problems with sizes increasing gradually. In Section \ref{sec:rate_conv},
we show that if the minimizers of the reduced problems stagnate,
then the point of stagnation is actually a global minimizer of the original problem, and
provide a rate-of-convergence analysis of the proposed framework. 
In this section, under mild assumptions, we prove a superlinear convergence result for the errors of the reduced problems.
Section \ref{sec:r_psa} discusses the extensions of the proposed framework to minimize the
real $\epsilon$-pseudospectral abscissa, the real part of the rightmost point
in the real $\epsilon$-pseudospectrum when the perturbations are constrained to be real matrices.
A Matlab implementation of the proposed framework is made publicly available. In Section \ref{sec:num_results},
we perform numerical experiments with this implementation on synthetic as well as benchmark examples
from the \emph{COMP}$l_e ib$ collection \cite{Leibfritz2004}, and observe that the deduced theoretical 
global convergence and superlinear convergence results hold in practice. The numerical results illustrate 
the efficiency and accuracy of the subspace framework in practice on large-scale pseudospectral 
abscissa minimization problems.

\section{Subspace Framework}\label{sec:sf}
In this section, we present a subspace framework for the minimization of the pseudospectral abscissa of a 
large-scale parameter dependent matrix $A(x)$ of the form (\ref{eq:affine_mvf}).
We resort to one-sided projections to deal with the large size of $A(x)$. Specifically, 
given a subspace ${\mathcal V} \subseteq \mathbb{C}^n$  
of dimension $k \ll n,$ and a matrix $V \in {\mathbb C}^{n\times k}$ whose columns form an orthonormal basis 
for ${\mathcal V}$, we minimize the pseudospectral abscissa of the reduced matrix-valued function
\begin{equation}\label{eq:red_mvf}
	A^V(x) := A(x)V =  f_1(x)A_1 V +\dots+f_{\kappa}(x)A_{\kappa} V
\end{equation}
instead of minimizing the pseudospectral abscissa of $A(x)$.
Formally, we define the $\epsilon$-pseudospectrum of the reduced matrix-valued function by
\begin{equation*}
	\begin{split}
	\Lambda_\epsilon (A^V(x)) \;
		 := \;
			\left\{  z \in \mathbb{C} \;\; | \;\; \sigma_{\min} (A^V(x)-zV)\leq \epsilon
			 \right\}	.
	\end{split}
\end{equation*}
A significant difference compared to the square case is that the rectangular matrix pencil 
$L(\lambda) = A^V(x) - \lambda V$ does not have to have an eigenvalue, i.e., it is possible
that $(A^V(x) - \lambda V) w \neq 0$ for every nonzero vector $w \in {\mathbb C}^k$ for 
every $\lambda \in {\mathbb C}$ \cite{Stewart1994, Gantmacher1959}. A consequence is
it may turn out to be the case that for $\epsilon > 0$ sufficiently small 
$\sigma_{\min}(A^V(x) - z V) > \epsilon$ for all $z \in {\mathbb C}$, 
equivalently $\Lambda_\epsilon (A^V(x)) =\emptyset $. Ruling this possibility out, 
under the assumption that $\Lambda_\epsilon (A^V(x)) \neq \emptyset$,
we define the $\epsilon$-pseudospectral abscissa of the reduced matrix-valued function in (\ref{eq:red_mvf}) by
\begin{equation}\label{eq:rec_psa}
	\begin{split}
	\alpha_\epsilon (A^V(x)) \;
		 				& :=  \;	\max \left\{  \text{Re}(z) \;\; | \; \; z\in \Lambda_\epsilon (A^V(x)) \right\} 		
						\\
				& \phantom{:}=  \;  \max \left\{  \text{Re}(z) \;\; | \;\; z \in {\mathbb C} \,\, \text{ s.t. } \, \sigma_{\min}( A^V(x) - zV) \leq \epsilon  \right\},
	\end{split}
\end{equation} 
and, under the assumption that $\Lambda_\epsilon (A^V(x)) \neq \emptyset$ for all $x \in \underline{\Omega}$, 
solve the minimization problem
\begin{equation}\label{min_pseudo_spectral}
	\min_{x\in\underline\Omega} \: \alpha_{\epsilon}(A^V(x)) \;\: = \;\:  \min_{x\in \underline\Omega} \max \left\{  \text{Re}(z) \;\; | \; \; z \in {\mathbb C} \, \text{ s.t. } \, \sigma_{\min}( A^V(x) - zV) \leq \epsilon  \right\}
\end{equation}
rather than (\ref{min_full}).

Employing two-sided projections for the reduction of $A(x)$ may appear as a plausible strategy since $A(x)$ is a 
non-Hermitian matrix-valued function, and since two-sided projections lead to square matrices with 
pseudospectra guaranteed to be nonempty. Yet, two-sided projections cause convergence problems, 
such as the loss of a monotonicity property discussed in the next section, crucial to set up an interpolatory
framework that converges globally and quickly.

We remark that $\Lambda_\epsilon (A^V(x))$ is independent 
of the choice of the orthonormal basis used for ${\mathcal V}$, that is 
$\Lambda_{\epsilon}(A^{V_1}(x)) = \Lambda_{\epsilon}(A^{V_2}(x))$  
for two different matrices $V_1$, $V_2$ whose columns form orthonormal bases 
for ${\mathcal V}$, as $\sigma_{\min}( A^{V_1}(x) - z V_1 ) = \sigma_{\min}( A^{V_2} (x) - zV_2)$
for any $z\in \C$ for such $V_1, V_2$. 
Accordingly, letting $\cV:=\Col(V)$, and hiding the dependence of the 
pseudospectra on $A(x)$, we use the shorthand notations
\begin{equation*}
	\Lambda_\epsilon (x) :=  \Lambda_\epsilon (A(x)), 
						\quad \Lambda^{\mathcal V}_\epsilon (x):= \Lambda_\epsilon (A^V(x)) \, .
\end{equation*}	
It follows that, when $\Lambda^{\mathcal V}_\epsilon (x) = \Lambda_\epsilon (A^V(x)) \neq \emptyset$, 
the $\epsilon$-pseudospectral abscissa $\alpha_\epsilon (A^V(x))$ is independent of the choice of
the orthonormal basis, so we also use the shorthands			
\begin{equation*}
	\alpha_\epsilon (x):= \alpha_\epsilon (A(x)), 
						\quad\, \alpha^{\mathcal V}_\epsilon (x):= \alpha_\epsilon (A^V(x)) \, 		
\end{equation*}
throughout the rest of this text.

The basic subspace framework for the minimization of $\alpha_{\epsilon}(x)$ is described 
in Algorithm \ref{alg1}.  Throughout, we always keep the assumption below regarding Algorithm \ref{alg1},
even though we do not state it explicitly. 
\begin{assumption}\label{ass:nonempty_psa}
The subspace ${\mathcal V}_1$ generated by Algorithm \ref{alg1} satisfies
$\Lambda_\epsilon^{{\mathcal V}_1}(x) \neq \emptyset$ for all $x \in \underline{\Omega}$. 
\end{assumption}
Under Assumption \ref{ass:nonempty_psa},
the monotonicity result in Section \ref{sec:basic_results}, i.e., part \textbf{(ii)} of Lemma \ref{monotonicity}, implies 
$\Lambda^{{\mathcal V}_k}_\epsilon(x) \neq \emptyset$ for all $x \in \underline{\Omega}$ for all 
$k \geq 2$ as well.
A justification for Assumption \ref{ass:nonempty_psa} is given towards the end of Section \ref{sec:basic_results};
see in particular Theorem \ref{thm:notempty_psa} and its implications. It turns out choosing
$x^{(1)}_1, \dots x^{(1)}_\eta$ in line \ref{alg:init_points} of Algorithm \ref{alg1} on a sufficiently fine 
uniform grid for $\underline{\Omega}$ typically guarantees the satisfaction of Assumption \ref{ass:nonempty_psa}. 
Moreover, letting $L := \inf_{z \in {\mathbb C}} \sigma_{\min}( A^{V_1}(x) - zV_1)$, for any $\epsilon > L$
we have $\Lambda^{{\mathcal V}_1}_{\epsilon}(x) \neq \emptyset$. Thus, provided $\epsilon > 0$ is large enough, 
$\Lambda^{{\mathcal V}_1}_{\epsilon}(x) \neq \emptyset$ for all $x \in \underline{\Omega}$. That is if $\epsilon$ is
large enough, Assumption \ref{ass:nonempty_psa} is again guaranteed to be satisfied.
\begin{algorithm}
\begin{algorithmic}[1]
	\REQUIRE{The matrix-valued function $A(x)$, the feasible region $\underline{\Omega} \,$, and $\epsilon > 0$.}
	\ENSURE{An estimate $\widehat{x}$ for $\arg\min_{x\in\underline{\Omega}}\alpha_\epsilon (x)$, and $\widehat{z} \in {\mathbb C}$
				that is an estimate for a globally rightmost point in $\Lambda_{\epsilon}(\widehat{x})$}	
	\vskip .6ex
	\STATE{$x^{(1)}_1, \dots x^{(1)}_\eta \gets$ initially chosen points in $\underline{\Omega}$.}\label{alg:init_points}
	\STATE{$z^{(1)}_j \gets \arg\max\left\{\text{Re}(z) \; | \; z \in {\mathbb C} \, \text{ s.t. } \,  \sigma_{\min}(A(x^{(1)}_j)-zI)\leq \epsilon \right\}$ for $j = 1, \dots , \eta$.}\label{line:abs1}
	\STATE{$v^{(1)}_{j} \gets$ a right singular vector corr. to $\sigma_{\min}(A(x^{(1)}_j)-z^{(1)}_jI)$ for $j = 1, \dots, \eta$.} \label{line:sv1}	
	\STATE{$\mathcal{V}_1 \gets$ span$\left\{v^{(1)}_{1} , \dots v^{(1)}_{\eta}\right\} \quad$ and $\quad V_1 \gets$ an orthonormal basis for ${\mathcal V}_1$.}\label{defn:V0} 
	\vskip .4ex
	\FOR{$k=2,3,\dots$}
	\vskip .3ex
	\STATE{$x^{(k)}\gets\arg\min_{x\in\underline{\Omega}}\alpha_\epsilon^{{\mathcal V}_{k-1}} (x)$.} \label{siter_start}
	\vskip .3ex
	\STATE{$z^{(k)} \gets \arg\max\left\{\text{Re}(z) \; |  \; z \in {\mathbb C} \, \text{ s.t. } \, 
										\sigma_{\min}(A(x^{(k)})-zI)\leq \epsilon \right\}$.}\label{line:absk}
	\vskip .4ex
	\STATE{\textbf{Return} $\widehat{x} \gets x^{(k)}$, $\widehat{z} \gets z^{(k)}$ if convergence occurred.} \label{alg:terminate}
	\vskip .75ex
	\STATE{$v^{(k)} \gets$ a unit right singular vector corresponding to $\sigma_{\min}(A(x^{(k)})-z^{(k)}I)$.}\label{line:svk} 
	\vskip .6ex
	\STATE
	{$V_k\gets \orth\left( [V_{k-1} \;\; v^{(k)}]\right) \quad$ {and}  
	$\quad \mathcal{V}_k \gets \Col(V_k)$.} \label{siter_end}
	\vskip .5ex
	\ENDFOR
\end{algorithmic}
\caption{The subspace framework to minimize $\alpha_{\epsilon}(x)$ over $\underline{\Omega}$}
\label{alg1}
\end{algorithm}

At each subspace iteration in Algorithm \ref{alg1} in lines \ref{siter_start}--\ref{siter_end}, 
first a small-scale reduced problem is solved in line \ref{siter_start}, in particular, a global minimizer $\widetilde{x}$ 
is found for a reduced problem. Then a rightmost point $\widetilde{z}$ of $\Lambda_{\epsilon}(\widetilde{x})$
is determined in line \ref{line:absk}. Finally, the subspace is expanded with the inclusion of a right singular vector 
corresponding to $\sigma_{\min}(A(\widetilde{x}) - \widetilde{z} I)$ in line \ref{siter_end}.
Note that the optimization problems in lines \ref{line:abs1}, \ref{siter_start} and \ref{line:absk}
are nonconvex, and can have more than one global optimizers; argmin and argmax in these lines
refer to any global minimizer and any global maximizer.

Determining the rightmost point of $\Lambda_{\epsilon}(\widetilde{x})$ involves the large-scale matrix-valued
function $A(x)$, and is usually the most expensive step computationally in a subspace iteration.
For this task, we usually benefit in practice from the approach in \cite{Kressner2014}, an approach that combines the
globally-convergent criss-cross algorithm \cite{Overton2003} for computing the pseudospectral
abscissa with a subspace framework. On the other hand, the small-scale reduced pseudspectral abscissa 
minimization problem in line \ref{siter_start} is usually cheap to solve. For this task, we employ  ``eigopt'', the globally convergent algorithm 
in \cite{Mengi2014}, if there is only one parameter, or otherwise, if the matrix-valued function depends on multiple parameters, 
we use ``GRANSO''\,\cite{Curtis2017}. The latter approach ``GRANSO'' is based on an adaptation
of BFGS with a proper line search for nonsmooth optimization problems, so can converge to local
minimizers rather than global minimizers, yet better suited for problems with several parameters.
In contrast, ``eigopt'' based on piecewise quadratic approximations of the objective
is guaranteed to converge globally if a lower bound on the second derivatives is
provided accurately as a parameter, but prohibitively expensive when there are more than
a few parameters.  
Last but not the least, let us also note that the right singular vector of the large-scale 
matrix in line \ref{line:svk} should normally be calculated iteratively, for instance by means of ``ARPACK''
\cite{Lehoucq1998}. 
Further implementation details of the subspace framework, including the condition to check convergence
used in practice in line \ref{alg:terminate}, are described in Section \ref{sec:num_implement}.

\subsection{Basic Results Regarding the Subspace Framework}\label{sec:basic_results}
Next, we present two basic results concerning the subspace framework that will be crucial 
in the convergence analysis, followed by a result that supports Assumption \ref{ass:nonempty_psa},
that is the assumption $\Lambda^{{\mathcal V}_1}_{\epsilon}(x) \neq \emptyset$ for all 
$x \in \underline{\Omega}$ regarding Algorithm \ref{alg1}.

The first result is the monotonicity property with respect to the subspace $\mathcal{V}$. 
We refer to Lemma 3.1 and succeeding arguments in \cite{Kressner2014} for a proof.

\begin{lemma}[Monotonicity]\label{monotonicity}
Let $\, \mathcal{V},\mathcal{W}$ be two subspaces of $\, \C^n$ such that 
$\mathcal{V} \subseteq \mathcal{W},$ and $V,W$ be matrices 
whose columns form orthonormal bases for $\, \mathcal{V}, \mathcal{W}$. 
Then the following assertions hold:
\begin{enumerate}
\item[\bf (i)]
$\sigma_{\min}(A(x)-zI) \, \leq \, \sigma_{\min}(A^{{W}}(x)-zW) 
				\, \leq \, 
\sigma_{\min}(A^{{V}}(x)-zV) \;\; \forall z\in\C, \, \forall x\in\Omega$.

\item[\bf (ii)]
$\Lambda_{\epsilon}^{\mathcal{V}}(x) \, \subseteq \, \Lambda_{\epsilon}^{\mathcal{W}}(x) \, \subseteq \, \Lambda_{\epsilon}(x)	
						\;\;\;		\forall x\in\Omega$.

\item[\bf (iii)]
$\alpha_{\epsilon}^{\mathcal{V}}(x) \, \leq \, \alpha_{\epsilon}^{\mathcal{W}}(x) \,  \leq  \, \alpha_{\epsilon}(x)  \;\;\; \forall x\in\Omega \;$ such that $\; \Lambda_{\epsilon}^{\mathcal V}(x) \neq \emptyset$.
\end{enumerate}
\end{lemma}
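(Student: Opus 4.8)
The plan is to reduce all three parts to a single variational identity for the smallest singular value of the tall matrix $(A(x)-zI)U$, where $U$ is any matrix whose columns form an orthonormal basis for a subspace $\mathcal{U}\subseteq\C^n$. First I would record the identity
\[
	\sigma_{\min}\bigl((A(x)-zI)U\bigr)
		\;=\;
	\min_{w\in\mathcal{U},\,\norm{w}_2=1}\norm{(A(x)-zI)w}_2 ,
\]
which follows from the standard characterization $\sigma_{\min}(M)=\min_{\norm{u}_2=1}\norm{Mu}_2$, valid for any matrix $M$ with at least as many rows as columns (here $(A(x)-zI)U\in\C^{n\times k}$ with $k=\dim\mathcal{U}\le n$), together with the fact that $U$ is an isometry: since $\norm{Uu}_2=\norm{u}_2$, the vector $w=Uu$ sweeps out the entire unit sphere of $\mathcal{U}$ as $u$ sweeps the unit sphere of $\C^k$. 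The key observation enabling this is that $A^U(x)-zU=A(x)U-zU=(A(x)-zI)U$, so the identity applies verbatim to the quantities in the lemma; moreover it exhibits the value as depending only on the subspace $\mathcal{U}$, not the chosen basis $U$, which is consistent with the basis-independence remarked just before the lemma.

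With this identity in hand, part \textbf{(i)} is immediate. Applying it with $\mathcal{U}=\mathcal{V}$, with $\mathcal{U}=\mathcal{W}$, and with $\mathcal{U}=\C^n$ (taking $U=I$ in the last case) expresses the three quantities $\sigma_{\min}(A^V(x)-zV)$, $\sigma_{\min}(A^W(x)-zW)$, and $\sigma_{\min}(A(x)-zI)$ as minima of the \emph{common} function $w\mapsto\norm{(A(x)-zI)w}_2$ over the unit spheres of $\mathcal{V}$, $\mathcal{W}$, and $\C^n$, respectively. Because $\mathcal{V}\subseteq\mathcal{W}\subseteq\C^n$, the feasible regions are nested, and minimizing a fixed function over a larger set can only decrease the optimal value; reversing the chain of inclusions yields the two inequalities of \textbf{(i)}, for every $z\in\C$ and $x\in\Omega$.

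Part \textbf{(ii)} then follows by unfolding the definition of the $\epsilon$-pseudospectrum: if $z\in\Lambda_\epsilon^{\mathcal{V}}(x)$, so $\sigma_{\min}(A^V(x)-zV)\le\epsilon$, then \textbf{(i)} gives $\sigma_{\min}(A^W(x)-zW)\le\epsilon$, i.e.\ $z\in\Lambda_\epsilon^{\mathcal{W}}(x)$, and applying the second inequality of \textbf{(i)} the same way places $z$ in $\Lambda_\epsilon(x)$. Finally, part \textbf{(iii)} is a consequence of \textbf{(ii)} and the elementary monotonicity of the maximum: since $\alpha_\epsilon^{\mathcal{V}}(x)$, $\alpha_\epsilon^{\mathcal{W}}(x)$, and $\alpha_\epsilon(x)$ are the suprema of $\Real(z)$ over the nested sets in \textbf{(ii)}, each supremum is bounded above by the next. (Should a reduced pseudospectrum be empty, the corresponding abscissa is $-\infty$ and the inequality is trivial; otherwise the sets are compact and the suprema are attained.) I do not anticipate a genuine obstacle here — the whole argument rests on the single reformulation of $\sigma_{\min}$ as a constrained minimum; the only points that warrant care are checking that $U$ being an isometry makes the minimum depend on the subspace rather than on the basis, and that the tall-matrix condition $k\le n$ legitimizes the variational formula for $\sigma_{\min}$.
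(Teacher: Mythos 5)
Your proof is correct, and it follows essentially the same route as the argument the paper points to (the paper cites Lemma~3.1 of the Kressner--Vandereycken reference rather than giving its own proof): the variational characterization $\sigma_{\min}\bigl((A(x)-zI)U\bigr)=\min\{\|(A(x)-zI)w\|_2 : w\in\mathcal{U},\ \|w\|_2=1\}$ over nested subspaces, from which \textbf{(ii)} and \textbf{(iii)} follow by taking sublevel sets and suprema. Your explicit attention to the tall-matrix condition, basis independence via the isometry property, and the possibly empty reduced pseudospectrum are all sound and add useful care.
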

The next result concerns the interpolation properties between the full and reduced problems, and
is borrowed from \cite[Lemma 3.2]{Kressner2014}, where a proof is also provided.
\begin{lemma}\label{lemma:interpol} 
For a given $\widetilde{x} \in {\Omega}$, a given subspace $\mathcal{V}$, the following are equivalent:
\begin{enumerate}
	\item[\bf (i)] $\Lambda^{\mathcal V}_\epsilon (\widetilde{x}) \neq \emptyset$ and
	$\alpha^{\mathcal V}_\epsilon (\widetilde{x}) = \alpha_\epsilon (\widetilde{x})$.
	\item[\bf (ii)] The subspace  $\mathcal{V}$ contains a right singular vector corresponding
	to $\sigma_{\min} (A(\widetilde{x})-\widetilde{z}I)$ for some $\widetilde{z} \in \Lambda_{\epsilon}(\widetilde{x})$ with
	$\mathrm{Re}(\widetilde{z}) = \alpha_\epsilon (\widetilde{x})$.
\end{enumerate}
\end{lemma}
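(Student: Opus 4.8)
The plan is to establish the two implications (ii) $\Rightarrow$ (i) and (i) $\Rightarrow$ (ii) separately, exploiting throughout the elementary but crucial identity $(A(\widetilde{x})-\widetilde{z}I)Vc = (A^V(\widetilde{x})-\widetilde{z}V)c$, valid for every $c\in\C^k$ and every $\widetilde{z}\in\C$, which links the full and reduced residuals once we set $v=Vc$. Since the columns of $V$ are orthonormal, this change of variables preserves norms, $\|Vc\|=\|c\|$, so that a unit vector on one side corresponds to a unit vector on the other. I would also record at the outset the boundary characterization of rightmost points: because $z\mapsto\sigma_{\min}(A(\widetilde{x})-zI)$ is continuous and $\Lambda_\epsilon(\widetilde{x})$ is closed, a rightmost point $\widetilde{z}$ (one with $\Real(\widetilde{z})=\alpha_\epsilon(\widetilde{x})$) cannot lie in the interior of $\Lambda_\epsilon(\widetilde{x})$, whence $\sigma_{\min}(A(\widetilde{x})-\widetilde{z}I)=\epsilon$; the same argument applied to $z\mapsto\sigma_{\min}(A^V(\widetilde{x})-zV)$ shows that a rightmost point of $\Lambda^{\mathcal V}_\epsilon(\widetilde{x})$ attains the value $\epsilon$ as well.

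For (ii) $\Rightarrow$ (i), I would take the singular vector $v$ guaranteed by (ii), write $v=Vc$ with $\|c\|=1$, and use the residual identity to obtain $\|(A^V(\widetilde{x})-\widetilde{z}V)c\| = \|(A(\widetilde{x})-\widetilde{z}I)v\| = \sigma_{\min}(A(\widetilde{x})-\widetilde{z}I)\le\epsilon$. This forces $\sigma_{\min}(A^V(\widetilde{x})-\widetilde{z}V)\le\epsilon$, i.e.\ $\widetilde{z}\in\Lambda^{\mathcal V}_\epsilon(\widetilde{x})$, so that $\alpha^{\mathcal V}_\epsilon(\widetilde{x})\ge\Real(\widetilde{z})=\alpha_\epsilon(\widetilde{x})$; combining this with the reverse inequality $\alpha^{\mathcal V}_\epsilon(\widetilde{x})\le\alpha_\epsilon(\widetilde{x})$ from Lemma \ref{monotonicity}(iii) yields (i).

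For the converse (i) $\Rightarrow$ (ii), I would let $\widetilde{z}$ be a rightmost point of the reduced pseudospectrum $\Lambda^{\mathcal V}_\epsilon(\widetilde{x})$ and $c$ a corresponding unit right singular vector of $A^V(\widetilde{x})-\widetilde{z}V$, so that $\|(A^V(\widetilde{x})-\widetilde{z}V)c\|=\sigma_{\min}(A^V(\widetilde{x})-\widetilde{z}V)=\epsilon$ by the boundary remark. Setting $v=Vc\in\mathcal{V}$ with $\|v\|=1$, the residual identity gives $\|(A(\widetilde{x})-\widetilde{z}I)v\|=\epsilon$, so in particular $\widetilde{z}\in\Lambda_\epsilon(\widetilde{x})$; moreover assumption (i) gives $\Real(\widetilde{z})=\alpha^{\mathcal V}_\epsilon(\widetilde{x})=\alpha_\epsilon(\widetilde{x})$, making $\widetilde{z}$ a rightmost point of the full pseudospectrum as well. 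The boundary remark then applies to the full problem, $\sigma_{\min}(A(\widetilde{x})-\widetilde{z}I)=\epsilon=\|(A(\widetilde{x})-\widetilde{z}I)v\|$, and since $v$ is a unit vector achieving the minimum $\min_{\|w\|=1}\|(A(\widetilde{x})-\widetilde{z}I)w\|$, it is a right singular vector of $A(\widetilde{x})-\widetilde{z}I$ associated with $\sigma_{\min}$, which is precisely (ii).

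I expect the single delicate point to be the identification step at the end of the converse: the monotonicity inequality of Lemma \ref{monotonicity}(i) alone only yields $\sigma_{\min}(A(\widetilde{x})-\widetilde{z}I)\le\|(A(\widetilde{x})-\widetilde{z}I)v\|$, and it is exactly the fact that both quantities equal $\epsilon$ at a rightmost point that upgrades this to equality and certifies $v$ as a minimal right singular vector. Everything else reduces to routine manipulation of the residual identity together with the norm-preservation property of the orthonormal basis $V$.
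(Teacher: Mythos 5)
Your proposal is correct and follows essentially the same route as the proof the paper relies on (the paper defers to \cite[Lemma 3.2]{Kressner2014}, and the same mechanism reappears in its proof of Lemma \ref{thm:hermite}): the residual identity $(A(\widetilde{x})-\widetilde{z}I)Vc=(A^V(\widetilde{x})-\widetilde{z}V)c$ combined with monotonicity (Lemma \ref{monotonicity}) and the fact that a rightmost point lies on the level set $\sigma_{\min}=\epsilon$. The only cosmetic difference is that you invoke the boundary characterization for the reduced pseudospectrum as well, which is dispensable, since in the converse direction the squeeze $\epsilon \geq \sigma_{\min}(A^V(\widetilde{x})-\widetilde{z}V) = \bigl\| (A(\widetilde{x})-\widetilde{z}I)v \bigr\|_2 \geq \sigma_{\min}(A(\widetilde{x})-\widetilde{z}I) = \epsilon$ already forces all the equalities you need.
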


The two results above pave the way for the conclusion that $\Lambda_{\epsilon}^{\mathcal V}(\widetilde{x}) \neq \emptyset$
at a given $\widetilde{x} \in \Omega$ provided proper vectors are included in the subspace ${\mathcal V}$. 
The next result has a stronger assertion. Its proof is included 
in Appendix \ref{app:nonempty_psa_proof}. In this result and in the subsequent sections, 
${\mathcal B}(\widetilde{c}, r)$ for a given $\widetilde{c} \in {\mathbb R}^q$ and $r \in {\mathbb R}$ denotes the closed ball
\[
	{\mathcal B}(\widetilde{c}, r)
		\;	:=	\;
	\{ \,	c \in {\mathbb R}^q	\;\;	|	\;\;	\| c - \widetilde{c} \|_2 \; \leq \; r	 \, \},
\]
which reduces to the closed interval $[\widetilde{c}-r,\widetilde{c}+r]$ in case $q = 1$. 
\begin{theorem}\label{thm:notempty_psa}
Let $\widetilde{x} \in \underline{\Omega}$, and suppose ${\mathcal V}$ is a subspace containing $v$, which is a 
right singular vector corresponding
to $\sigma_{\min} (A(\widetilde{x})-\widetilde{z}I)$ for some $\widetilde{z} \in \Lambda_{\epsilon}(\widetilde{x})$ with
$\mathrm{Re}(\widetilde{z}) = \alpha_\epsilon (\widetilde{x})$. Then 
$\widetilde{z} \in \Lambda^{\mathcal V}_{\epsilon}(\widetilde{x})$. Moreover,
if the singular value $\sigma_{\min} (A(\widetilde{x})-\widetilde{z}I)$ is simple,
there exists $\nu \in {\mathbb R}$, $\nu > 0$ independent of the subspace ${\mathcal V}$ such that
\begin{equation}\label{eq:nonempty_ball}
	\Lambda^{\mathcal V}_{\epsilon}(x) 	\;	 \neq 	\;	\emptyset		\quad\quad	
		\forall x \in {\mathcal B}(\widetilde{x}, \nu) \cap \Omega	\:	.
\end{equation}
Additionally, suppose for every $x \in \underline{\Omega}$ there is a corresponding rightmost point $z$
in $\Lambda_{\epsilon}(x)$ such that $\sigma_{\min} (A(x) - zI)$ is simple.
Then $\nu$ in (\ref{eq:nonempty_ball}) is also independent of $\widetilde{x} \in \underline{\Omega}$.
\end{theorem}
An implication of Theorem \ref{thm:notempty_psa} regarding Algorithm \ref{alg1} is that,
under the simplicity assumption on the smallest singular value at the rightmost points
of $\epsilon$-pseudospectra,
if $x^{(1)}_1, \dots , x^{(1)}_\eta$ are grid points on a sufficiently fine uniform grid for $\underline{\Omega}$,
then
$\Lambda^{{\mathcal V}_1}_\epsilon(x) \neq \emptyset$ for all $x \in \underline{\Omega}$. 
Hence, Assumption \ref{ass:nonempty_psa} holds.

Global convergence of the proposed subspace framework as well as its rapid convergence can be attributed to 
the interpolation properties between the full and the reduced problems. In a subsequent subsection, we establish 
interpolation properties between $\alpha_\epsilon(\cdot)$ and $\alpha_\epsilon^{\cV_k}(\cdot)$ as well as between their 
first derivatives for the subspaces ${\mathcal V}_k$ generated by Algorithm \ref{alg1}.
Before stating the result formally, we first focus on the formulas for the derivatives of $\alpha_\epsilon(\cdot)$
and $\alpha_\epsilon^{\cV}(\cdot)$ for a given subspace $\cV$ in the next subsection.

\subsection{Derivatives of the Pseudospectral Abscissa}\label{sec:psa_derivatives}
We view the pseudospectral abscissa as a constrained optimization problem.
Formally, letting $A(x,z): = A(x)-(z_1 + \ri z_2)I$ and $\sigma(x,z):=\sigma_{\min}(A(x,z))$ for 
$z= (z_1,z_2)\in\R^2$,  the $\epsilon$-pseudospectral abscissa $\alpha_\epsilon(x)$ can 
be expressed as
\begin{equation}\label{eq:opt1}
	\alpha_{\epsilon}(x) 		\;	=	\;	
			\max \{ z_1	\;\;		|	\;\; 	 z =  (z_1,z_2)\in\R^2, \;\;
								 \sigma(x, z )- \epsilon \leq 0	\}.						
\end{equation}
We consider the Langrangian function
\begin{equation}\label{eq:Lagrange}
	\mathcal{L} (x,z,\mu)		\;	:=		\;	 z_1-\mu(\sigma(x,z)-\epsilon)  
\end{equation}
associated with \eqref{eq:opt1}, where $\mu \geq 0$ is the Lagrange multiplier corresponding 
to the constraint $\sigma(x, z )- \epsilon \leq 0$. 
For a given $x$, we denote the optimal $z = (z_1 , z_2)$ and the corresponding $\mu$ for 
the optimization problem in (\ref{eq:opt1}), assuming they are unique, with $z(x) = (z_1(x), z_2(x))$ 
and $\mu(x)$, respectively. Moreover, we make use of the notations $y = (z, \mu)$ and $y(x) =(z(x),\mu(x))$. 
In the subsequent discussions, $\sigma_{z_1}(\cdot), \sigma_{z_2}(\cdot), \cL_{z_1}(\cdot), \cL_{z_2}(\cdot)$ represent 
the partial derivatives of $\sigma(\cdot)$, $\cL(\cdot)$
with respect to $z_1$, $z_2$. For $u, w \in \{ x , y \}$, the notations 
$\nabla_u \cL(\cdot)$ and $\nabla^2_{uw} \cL(\cdot)$ correspond to the gradient of $\cL(\cdot)$ with respect to $u$ and
Hessian of $\cL(\cdot)$ for which the partial derivatives are first taken with respect to $u$ then 
with respect to $w$. 
\begin{definition}\label{assump1}
We call $x \in \Omega$ a nondegenerate point if
\begin{itemize}
\item[\bf (i)] the optimal $z$ of (\ref{eq:opt1}), denoted as $z(x)$, is unique,
\item[\bf (ii)] the singular value $\sigma(x,z(x))$ of $A(x,z(x))$ 
is simple, and
\item[\bf (iii)]$\nabla^2_{yy}\cL(x,y(x))$
is nonsingular.
\end{itemize} 
\end{definition}
For part \textbf{(iii)} of the definition above, we remark that its parts \textbf{(i)-(ii)} ensure
the uniqueness of $\mu(x)$, and so the uniqueness of $y(x)$ (i.e., see the proof of Theorem \ref{eq:psa_derivs}
in Appendix \ref{app:psa_der_proof}).

The next theorem below can be regarded as a generalization of the part of \cite[Theorem 8.1]{Burke2003}
that concerns the differentiability and first derivative of the map $X \in {\mathbb C}^{n\times n} \mapsto \alpha_{\epsilon}(X)$. 
Its proof exploits the optimality conditions for the constrained optimization
problem (\ref{eq:opt1}), as well as the implicit function theorem, and is given in Appendix \ref{app:psa_der_proof}.


\begin{theorem}[Derivatives of the Pseudospectral Abscissa Function]\label{eq:psa_derivs}
Let $\widetilde{x} \in \Omega$ be a nondegenerate point. Then the function $x \mapsto \alpha_{\epsilon}(x)$ is real-analytic
at $x = \widetilde{x}$ with
	\begin{equation}\label{eq:formula_1der}
				\nabla \alpha_\epsilon(\widetilde{x})	\;	=	\; 
				\left[	
					\begin{array}{ccc}
										\Real\left(\frac{ u^*  \left[  \frac{\partial A}{\:\, \partial x_1} (\widetilde{x}) \right]  v}{u^*v} \right)
											&		\cdots		&
										\Real\left(\frac{ u^* \left[ \frac{\partial A}{\:\, \partial x_d} (\widetilde{x}) \right]  v}{u^*v} \right)
					\end{array}
				\right]^T	\;
	\end{equation}
where $u$, $v$ consist of a pair of consistent unit left, right singular vectors, respectively, 
of $A(\widetilde{x}, z(\widetilde{x}))$ corresponding to $\sigma(\widetilde{x}, z(\widetilde{x}))$. Moreover,
\begin{equation}\label{eq:psa_2der}
	\nabla^2\alpha_\epsilon(\widetilde{x}) = \nabla^2_{xx}\cL(\widetilde{x},y(\widetilde{x})) - \nabla^2_{xy}\cL(\widetilde{x},y(\widetilde{x}))
				\cdot [\nabla^2_{yy}\cL(\widetilde{x},y(\widetilde{x}))]^{-1}[\nabla^2_{xy}\cL(\widetilde{x},y(\widetilde{x}))]^T
\end{equation}
where
\begin{equation*}
\begin{split}
	&	[\nabla^2_{xx}\cL(\widetilde{x},y(\widetilde{x}))]_{j\ell}  \; =  \;  
		-\mu(\widetilde{x}) \: 
		\left\{ \frac{\partial^2}{\partial x_j\partial x_\ell} [ \sigma(x,z) ] \bigg\vert_{\widetilde{x},z(\widetilde{x})} \right\}  \quad \quad j,\ell = 1,2,\cdots, d,		\\
	&	[\nabla^2_{xy}\cL(\widetilde{x},y(\widetilde{x}))]_{j\ell} \;  =  \; -\mu(\widetilde{x}) \: \left\{ \frac{\partial^2}{\partial x_j\partial y_\ell} [ \sigma(x,z) ] \bigg\vert_{\widetilde{x},z(\widetilde{x})} \right\} \quad \quad j = 1,2,\cdots d, \; \; \ell = 1,2,3,	\\
	&	[\nabla^2_{yy}\cL(\widetilde{x},y(\widetilde{x}))]_{j\ell}  \; =  \;  -\frac{\partial^2 }{\partial y_j\partial y_\ell} [ \mu \cdot \sigma(x,z) ] \bigg\vert_{\widetilde{x}, z(\widetilde{x}), \mu(\widetilde{x})} \quad \,    j,\ell = 1,2,3,
\end{split}
\end{equation*}
$(y_1,y_2,y_3) = (z_1,z_2,\mu)$, and $\, \mu(\widetilde{x}) = -1/ (u^\ast v)$. 
\end{theorem}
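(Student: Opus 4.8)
The plan is to view \eqref{eq:opt1} as a smooth constrained maximization in the variable $z$ and to differentiate its optimal value through the associated stationarity (KKT) system, invoking the implicit function theorem. I would first record the consequences of nondegeneracy. By part \textbf{(ii)} of Definition~\ref{assump1}, the smallest singular value $\sigma(x,z) = \sigma_{\min}(A(x)-(z_1+\ri z_2)I)$ is simple at $(\widetilde{x}, z(\widetilde{x}))$, hence real-analytic in a neighborhood, with $\partial\sigma/\partial x_j = \Real(u^\ast [\partial A/\partial x_j] v)$, $\sigma_{z_1} = -\Real(u^\ast v)$ and $\sigma_{z_2} = \Imag(u^\ast v)$ for a consistent pair $u,v$ of unit left/right singular vectors corresponding to $\sigma$; these standard first-order perturbation formulas for a simple singular value I would assume. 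The constraint must be active at the maximizer, since were $\sigma(\widetilde{x}, z(\widetilde{x})) < \epsilon$, increasing $z_1$ slightly would preserve feasibility while enlarging the objective. Consequently $\mu(\widetilde{x}) > 0$ (otherwise $\nabla_z\cL = (1,0)^T \neq 0$), and the maximizer together with its multiplier solves $\nabla_y \cL(x,y) = 0$, where $y = (z_1,z_2,\mu)$ and $\partial\cL/\partial\mu = -(\sigma-\epsilon)$ recovers the active constraint. The stationarity relations $1 - \mu\sigma_{z_1} = 0$ and $\mu\sigma_{z_2} = 0$ force $\sigma_{z_2} = 0$, so $u^\ast v$ is real; hence $\mu(\widetilde{x}) = 1/\sigma_{z_1} = -1/\Real(u^\ast v) = -1/(u^\ast v)$, and in particular $u^\ast v \neq 0$, which is the constraint qualification.

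With these in hand, I would apply the implicit function theorem to $F(x,y) := \nabla_y \cL(x,y)=0$. Since $F$ is real-analytic near $(\widetilde{x}, y(\widetilde{x}))$, vanishes there, and has $y$-Jacobian $\nabla^2_{yy}\cL(\widetilde{x}, y(\widetilde{x}))$ nonsingular by part \textbf{(iii)}, there is a unique real-analytic branch $x \mapsto y(x) = (z(x), \mu(x))$ of solutions through $(\widetilde{x}, y(\widetilde{x}))$. The delicate point is to confirm that this branch delivers the \emph{global} maximizer of \eqref{eq:opt1}, not merely a critical point. Here I would use part \textbf{(i)}: since $z(\widetilde{x})$ is the unique rightmost point of the compact set $\Lambda_\epsilon(\widetilde{x})$ and the pseudospectrum varies continuously (in the Hausdorff metric) with $x$, the globally rightmost point $z(x)$ stays close to $z(\widetilde{x})$ for $x$ near $\widetilde{x}$; by the constraint qualification it satisfies the stationarity system, and by the local uniqueness from the implicit function theorem it therefore coincides with the analytic branch. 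Thus $\alpha_\epsilon(x) = z_1(x)$ equals the first component of $y(x)$ and is real-analytic at $\widetilde{x}$; equivalently $\alpha_\epsilon(x) = \cL(x, y(x))$, as the constraint is active along the branch.

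For the gradient \eqref{eq:formula_1der} I would differentiate $\alpha_\epsilon(x) = \cL(x, y(x))$ and invoke the envelope identity: the total derivative equals $\nabla_x \cL(\widetilde{x}, y(\widetilde{x})) + [\nabla_x y(\widetilde{x})]^T \nabla_y \cL(\widetilde{x}, y(\widetilde{x}))$, and the second term vanishes because $\nabla_y \cL = 0$ on the branch. Since $\nabla_x \cL = -\mu\,\nabla_x \sigma$, combining this with the singular-value derivative formula and $\mu(\widetilde{x}) = -1/(u^\ast v)$ (using that $u^\ast v$ is real, so $\Real$ may be moved across the quotient) yields exactly \eqref{eq:formula_1der}.

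Finally, for the Hessian \eqref{eq:psa_2der} I would differentiate the reduced-gradient identity $\nabla\alpha_\epsilon(x) = \nabla_x \cL(x, y(x))$, valid on a whole neighborhood, obtaining $\nabla^2\alpha_\epsilon = \nabla^2_{xx}\cL + \nabla^2_{xy}\cL \cdot J_y$ with $J_y := \nabla_x y(\widetilde{x})$. Differentiating $\nabla_y\cL(x,y(x))=0$ gives $\nabla^2_{yx}\cL + \nabla^2_{yy}\cL\, J_y = 0$, so $J_y = -[\nabla^2_{yy}\cL]^{-1}\nabla^2_{yx}\cL = -[\nabla^2_{yy}\cL]^{-1}[\nabla^2_{xy}\cL]^T$ by symmetry of the mixed second derivatives; substitution produces the Schur-complement expression \eqref{eq:psa_2der}. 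The explicit block formulas then follow by direct differentiation of $\cL = z_1 - \mu(\sigma-\epsilon)$, since the $z_1$ and $\mu\epsilon$ terms are annihilated by the relevant second derivatives, leaving contributions only from $-\mu\sigma$. The main obstacle throughout is the global-maximizer argument of the second paragraph: the implicit function theorem produces only a local critical branch, and promoting it to the genuine pseudospectral abscissa requires the uniqueness in part \textbf{(i)} together with continuity of the optimal solution map.
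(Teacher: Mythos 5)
Your proposal is correct and takes essentially the same route as the paper's proof in Appendix A: first-order (KKT) conditions for the constrained characterization (\ref{eq:opt1}) yielding $\mu(\widetilde{x}) = -1/(u^\ast v)$, the analytic implicit function theorem applied to the stationarity system $\nabla_y \cL = 0$ via the nonsingularity of $\nabla^2_{yy}\cL$, the envelope/sensitivity identity $\nabla\alpha_\epsilon(x) = \nabla_x\cL(x,y(x))$ for the gradient, and differentiation of that identity together with the stationarity system for the Hessian. The only differences are presentational: where the paper appeals to ``standard sensitivity analysis'' to conclude that the optimizer map is locally unique and coincides with the implicit-function branch, you argue this directly through the Hausdorff continuity of the pseudospectrum and the uniqueness of the rightmost point, and you derive the envelope identity rather than citing it.
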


If we repeat the arguments above for the reduced pseudospectral abscissa function $\alpha_\epsilon^\cV(\cdot)$
for a given subspace $\cV$,  assuming the corresponding reduced $\epsilon$-pseudospectrum
is not empty in a neighborhood of the point under consideration,
analogous formulas for the derivatives of $\alpha_\epsilon^\cV(\cdot)$ can be 
obtained in terms of a matrix $V$ whose columns form an orthonormal basis for $\cV$. 
Now  $\alpha_\epsilon^\cV(\cdot)$ can be expressed as the optimization problem
\begin{equation}\label{eq:opt_red}
	\alpha_{\epsilon}^\cV (x) 		\;	=	\;	
			\max \{ z_1	\;\;		|	\;\; 	 z =  (z_1,z_2)\in\R^2, \;\;
								 \sigma^\cV (x, z )- \epsilon \leq 0	\}	,						
\end{equation}
and the Lagrangian for the optimization problem associated with $\alpha_\epsilon^\cV(\cdot)$
takes the form
\begin{equation}\label{eq:Lagrangian_red}
	\cL^\cV(x,z,\mu)  \;  :=  \;  z_1-\mu(\sigma^\cV(x,z)-\epsilon)	, 
\end{equation}
where $\sigma^\cV(x,z):= \sigma_{\min}(A^V(x,z))$ and $A^V(x,z):= A^V(x)-(z_1 + \ri z_2)V$.
We denote the optimal $z$ and the corresponding Lagrange multiplier $\mu$ 
for the optimization problem in (\ref{eq:opt_red}), assuming their uniqueness, 
by $z^\cV(x)$ and $\mu^\cV(x)$, and also let $y^\cV(x) = (z^\cV(x) , \mu^\cV(x))$ 
at a given $x \in \Omega$.
\begin{definition}\label{assump2}
Let ${\mathcal V}$ be a subspace of $\, {\mathbb C}^n$. 
We call $x \in \Omega$ a nondegenerate point of the restriction of $A(\cdot)$ 
to the subspace ${\mathcal V}$ if
\begin{itemize}
\item[\bf (i)] there is $\nu \in {\mathbb R}, \nu > 0$ such that
$\Lambda^{\mathcal V}_{\epsilon}(\widehat{x}) \neq \emptyset$ for all $\, \widehat{x} \in {\mathcal B}(x, \nu) \cap \Omega$, 
\item[\bf (ii)] the optimal $z$ of (\ref{eq:opt_red}), denoted as $z^\cV (x)$, is unique,
\item[\bf (iii)] the singular value $\sigma^\cV (x,z^\cV (x))$ of $A^V (x,z^\cV (x))$ 
is simple for any matrix $V$ whose columns form an orthonormal basis for $\cV$, and
\item[\bf (iv)]$\nabla^2_{yy}\cL^\cV (x,y^\cV (x))$
is nonsingular.
\end{itemize} 
\end{definition}

Applying steps analogous to the ones in the proof of Theorem \ref{eq:psa_derivs} yield
the following result.
\begin{theorem}[Derivatives of the Reduced Pseudospectral Abscissa Function]\label{eq:psa_derivs_red}
Let $\cV$ be a subspace of $\, {\mathbb C}^n$, 
and $\widetilde{x} \in \Omega$ be a nondegenerate point of the restriction of $A(\cdot)$ to $\cV$. 
Then the function $x \mapsto \alpha^\cV_{\epsilon}(x)$ is real-analytic at $x = \widetilde{x}$ with
	\begin{equation}\label{eq:formula_1der_red}
				\nabla \alpha^{\cV}_\epsilon(\widetilde{x})	\;	=	\; 
				\left[	
					\begin{array}{ccc}
			\Real\left(\frac{ (u^V)^\ast  \left[  \frac{\partial A^V}{\:\, \partial x_1} (\widetilde{x}) \right]  v^V }{(u^V)^\ast V v^V} \right)
											&		\cdots		&
			\Real\left(\frac{ (u^V)^\ast \left[ \frac{\partial A^V}{\:\, \partial x_d} (\widetilde{x}) \right]  v^V }{(u^V)^* V v^V } \right)
					\end{array}
				\right]^T	\;
	\end{equation}
where $V$ is a matrix whose columns form an orthonormal basis for $\cV$, and
$u^V, v^V$ form a pair of consistent unit left, unit right singular vectors of
$A^V(\widetilde{x},z^\cV(\widetilde{x}))$
corresponding to $\sigma^\cV(\widetilde{x},z^\cV(\widetilde{x}))$.
Furthermore,
\begin{equation}\label{eq:secondder2}
\begin{split}
\nabla^2\alpha_\epsilon^\cV(\widetilde{x}) 	\;	=	\;	 \nabla^2_{xx}\cL^\cV(\widetilde{x},y^\cV(\widetilde{x})) \;\,	- 	\hskip 44ex		\\
	\hskip 5ex		
	\nabla^2_{xy}\cL^\cV(\widetilde{x},y^\cV(\widetilde{x})) [\nabla^2_{yy}\cL^\cV(\widetilde{x},y^\cV(\widetilde{x}))]^{-1}[\nabla^2_{xy}\cL^\cV(\widetilde{x},y^\cV(\widetilde{x}))]^T ,
\end{split}
\end{equation}
where
\begin{equation*}
\begin{split}
	&	[\nabla^2_{xx}\cL^\cV(\widetilde{x},y^\cV(\widetilde{x}))]_{j\ell} \; = \;  
	-\mu^\cV (\widetilde{x}) \: 
		\left\{ \frac{\partial^2}{\partial x_j\partial x_\ell} [ \sigma^\cV (x,z) ] \bigg\vert_{\widetilde{x},z^\cV(\widetilde{x})} \right\}  \quad  j,\ell = 1,2,\cdots, d,	\\
	&	[\nabla^2_{xy}\cL^\cV(\widetilde{x},y^\cV(\widetilde{x}))]_{j\ell} \; = \; -\mu^\cV (\widetilde{x}) 
	\: \left\{ \frac{\partial^2}{\partial x_j\partial y_\ell} [ \sigma^\cV (x,z) ] \bigg\vert_{\widetilde{x},z^\cV (\widetilde{x})} \right\} \quad  \,  j = 1,2,\cdots d, \; \; \ell = 1,2,3, \\
	&	[\nabla^2_{yy}\cL^\cV(\widetilde{x},y^\cV(\widetilde{x}))]_{j\ell}  \; =  \;  -\frac{\partial^2 }{\partial y_j\partial y_\ell} 
	[ \mu \cdot \sigma^\cV (x,z) ] \bigg\vert_{\widetilde{x}, z^\cV (\widetilde{x}), \mu^\cV (\widetilde{x})}  \quad \;\;\:    j,\ell = 1,2,3,
\end{split}
\end{equation*}
and $\, \mu^\cV (\widetilde{x}) = -1/  \big( (u^V)^\ast \, V \, v^V \big)$.
\end{theorem}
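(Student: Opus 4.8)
The plan is to reproduce the proof of Theorem~\ref{eq:psa_derivs} almost verbatim, replacing the full-matrix objects $A(x)$, $I$, $\sigma$, $u$, $v$, $\mu$ throughout by their reduced counterparts $A^V(x)$, $V$, $\sigma^\cV$, $u^\cV$, $v^\cV$, $\mu^\cV$. This is legitimate because the reduced maximization problem (\ref{eq:opt_red}) with its Lagrangian (\ref{eq:Lagrangian_red}) has precisely the same constrained-optimization structure as (\ref{eq:opt1})--(\ref{eq:Lagrange}); the only structural change is that in the reduced pencil $A^V(x,z) = A^V(x) - zV = (A(x) - zI)V$ the identity $I$ is replaced by the tall matrix $V$, so I would track where this substitution enters and confirm it is harmless. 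I would start by writing down the first-order optimality conditions $\nabla_y\cL^\cV(\widetilde{x},y^\cV(\widetilde{x})) = 0$ for (\ref{eq:opt_red}). Parts \textbf{(i)}--\textbf{(ii)} of Definition~\ref{assump2} ensure that, near $\widetilde{x}$, the maximizer $z^\cV(x)$ is locally unique and $\sigma^\cV(\cdot,\cdot)$ is a simple, hence real-analytic, smallest singular value, which together single out $\mu^\cV(\widetilde{x})$.

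For the first derivative I would invoke standard first-order perturbation theory for a simple smallest singular value: with $M(x,z) := A^V(x,z)$ one has $\partial\sigma^\cV/\partial t = \Real\big((u^\cV)^*(\partial M/\partial t)v^\cV\big)$ for any real scalar coordinate $t$. Since $\partial M/\partial z_1 = -V$, $\partial M/\partial z_2 = -\ri V$ and $\partial M/\partial x_j = \partial A^V/\partial x_j$, the stationarity condition $\cL^\cV_{z_2} = -\mu^\cV\sigma^\cV_{z_2} = 0$ forces $\Imag\big((u^\cV)^* V v^\cV\big) = 0$, i.e.\ $(u^\cV)^* V v^\cV \in \R$, while $\cL^\cV_{z_1} = 1 - \mu^\cV\sigma^\cV_{z_1} = 0$ gives $\mu^\cV(\widetilde{x}) = -1/\big((u^\cV)^* V v^\cV\big)$. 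The envelope identity $\nabla\alpha_\epsilon^\cV(\widetilde{x}) = \nabla_x\cL^\cV(\widetilde{x},y^\cV(\widetilde{x})) = -\mu^\cV(\widetilde{x})\,\nabla_x\sigma^\cV$ then yields (\ref{eq:formula_1der_red}); here the reality of $(u^\cV)^* V v^\cV$ is exactly what permits pulling the denominator inside $\Real(\cdot)$.

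For the second derivative I would view the first-order system $F(x,y) := \nabla_y\cL^\cV(x,y) = 0$ as an implicit equation for $y^\cV(x)$. Part \textbf{(iii)} of Definition~\ref{assump2} makes $\partial F/\partial y = \nabla^2_{yy}\cL^\cV(\widetilde{x},y^\cV(\widetilde{x}))$ nonsingular, so the implicit function theorem delivers the local real-analyticity of $y^\cV(\cdot)$ --- and hence of $\alpha_\epsilon^\cV = z_1^\cV$ --- together with $\,\mathrm{d}y^\cV/\mathrm{d}x = -[\nabla^2_{yy}\cL^\cV]^{-1}[\nabla^2_{xy}\cL^\cV]^T$. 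Differentiating the envelope identity once more produces the Schur-complement expression (\ref{eq:secondder2}), and the three Hessian blocks follow by differentiating $\cL^\cV = z_1 - \mu(\sigma^\cV - \epsilon)$ directly in $x$ and $y = (z_1,z_2,\mu)$.

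I expect the only genuinely delicate point, rather than routine transcription, to be the legitimacy of the singular-value expansion used above: I must guarantee that the simple smallest singular value $\sigma^\cV(x,z)$ and a \emph{consistent} pair of unit left/right singular vectors $u^\cV \in \C^n$, $v^\cV \in \C^k$ vary real-analytically in a neighborhood of $(\widetilde{x},z^\cV(\widetilde{x}))$, and that their phase and normalization are fixed so that the resulting formulas are independent of the chosen orthonormal basis $V$ of $\cV$ (consistent with $\alpha_\epsilon^\cV$ itself being basis-independent). The accompanying computation to verify is simply that, after the substitution $I \mapsto V$, the quantity $(u^\cV)^* V v^\cV$ remains real and nonzero so that $\mu^\cV(\widetilde{x})$ is well defined; every remaining step is identical to the full-matrix case.
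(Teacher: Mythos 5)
Your proposal is correct and takes essentially the same route as the paper: the paper proves this theorem precisely by noting that "steps analogous to the ones in the proof of Theorem~\ref{eq:psa_derivs}" apply, and your systematic transcription of that appendix proof with $A(x)\mapsto A^V(x)$, $I\mapsto V$, $\sigma\mapsto\sigma^\cV$, $u,v\mapsto u^\cV,v^\cV$, $\mu\mapsto\mu^\cV$ is exactly that argument. In particular, your use of the first-order conditions $\cL^\cV_{z_1}=\cL^\cV_{z_2}=0$ to show $(u^\cV)^\ast V v^\cV$ is real and nonzero, and of the analytic implicit function theorem via part \textbf{(iii)} of Definition~\ref{assump2} to obtain real-analyticity and the Schur-complement Hessian, matches the paper's reasoning step for step.
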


\medskip

\medskip

\noindent
\textbf{Remark.} Expressions for the third and higher-order derivatives of $x \mapsto \alpha_\epsilon (x)$ 
at a nondegenerate point $\widetilde{x} \in \Omega$ can be derived by differentiating (\ref{eq:secondder}) further. 
Similarly, the higher-order derivatives of $x \mapsto \alpha^\cV_\epsilon (x)$ 
at a nondegenerate point of the restriction of $A(\cdot)$ to $\cV$ can be obtained
by differentiating the Lagrangian in (\ref{eq:Lagrangian_red}) repeatedly.

\subsection{Hermite Interpolation of the Pseudospectral Abscissa}\label{sec:Hermit_interpolate}
The next result concerns the Hermite interpolation properties between the full pseudospectral abscissa
function $\alpha_\epsilon(\cdot)$ and its reduced counterpart 
$\alpha^{\cV_k}_\epsilon(\cdot)$ for a subspace $\cV_k$ generated by Algorithm \ref{alg1}.
\begin{lemma}[Hermite Interpolation]\label{thm:hermite}
	The following assertions hold regarding the subspace $\cV_k$ for every integer $k \geq 2$
	and the points $x^{(\ell)}$, $z^{(\ell)}$ for $\ell = 2, \dots , k$ generated by Algorithm \ref{alg1}:
	\begin{itemize}
		\item[\bf{(i)}] $\Lambda^{{\mathcal V}_k}_{\epsilon}(x^{(\ell)}) \neq \emptyset \,$ and
		$\, \alpha_{\epsilon}(x^{(\ell)})=\alpha_{\epsilon}^{\mathcal{V}_k}(x^{(\ell)})$. 
		
		\item[\bf (ii)] The point $z^{(\ell)}$ is a rightmost point of $\Lambda^{\cV_k}_\epsilon(x^{(\ell)})$.
		
		\item[\bf{(iii)}] If $x^{(\ell)}$ is a nondegenerate point, then 		
		$\alpha_{\epsilon}(x)$ and $\alpha_{\epsilon}^{\mathcal{V}_k}(x)$  are differentiable 
		at $x=x^{(\ell)}$, and 
		$ \;
			\nabla\alpha_{\epsilon}(x^{(\ell)})=\nabla\alpha_{\epsilon}^{\mathcal{V}_k}(x^{(\ell)}). 
		$
	\end{itemize}
\end{lemma}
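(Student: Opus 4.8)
The plan is to take the three assertions in order, using the two basic results (Lemma \ref{monotonicity} and Lemma \ref{lemma:interpol}) for parts \textbf{(i)} and \textbf{(ii)}, and the derivative formulas (Theorems \ref{eq:psa_derivs} and \ref{eq:psa_derivs_red}) for part \textbf{(iii)}. The single structural fact driving everything is that, by the update in line \ref{siter_end} of Algorithm \ref{alg1}, the generated subspaces are nested, $\cV_\ell \subseteq \cV_k$ for $\ell \leq k$, so each singular vector $v^{(\ell)}$ belongs to $\cV_k$. For part \textbf{(i)}, I would simply verify condition \textbf{(ii)} of Lemma \ref{lemma:interpol} at $\widetilde{x} = x^{(\ell)}$ with $\mathcal{V} = \cV_k$: by line \ref{line:absk}, $z^{(\ell)}$ is a rightmost point of $\Lambda_{\epsilon}(x^{(\ell)})$, so $\Real(z^{(\ell)}) = \alpha_\epsilon(x^{(\ell)})$, and by line \ref{line:svk}, $v^{(\ell)} \in \cV_\ell \subseteq \cV_k$ is a right singular vector for $\sigma_{\min}(A(x^{(\ell)}) - z^{(\ell)} I)$. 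The equivalence in Lemma \ref{lemma:interpol} then gives $\alpha_\epsilon^{\cV_k}(x^{(\ell)}) = \alpha_\epsilon(x^{(\ell)})$.

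For part \textbf{(ii)}, the idea is to sandwich the reduced smallest singular value. Writing $M := A(x^{(\ell)}) - z^{(\ell)} I$, part \textbf{(i)} of Lemma \ref{monotonicity} (with $\mathcal{W} = \C^n$) gives $\sigma_{\min}(A^{V_k}(x^{(\ell)}) - z^{(\ell)} V_k) \geq \sigma_{\min}(M) = \epsilon$. Conversely, writing $v^{(\ell)} = V_k c$ with $\norm{c} = 1$ and using $M v^{(\ell)} = \epsilon u$ for the consistent unit left singular vector $u$, I obtain $\norm{(A^{V_k}(x^{(\ell)}) - z^{(\ell)} V_k)\, c} = \norm{M v^{(\ell)}} = \epsilon$, so the reduced smallest singular value is also $\leq \epsilon$, hence equal to $\epsilon$; thus $z^{(\ell)} \in \Lambda_\epsilon^{\cV_k}(x^{(\ell)})$. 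Combining this with $\Real(z^{(\ell)}) = \alpha_\epsilon(x^{(\ell)}) = \alpha_\epsilon^{\cV_k}(x^{(\ell)})$ from part \textbf{(i)} and the inclusion $\Lambda_\epsilon^{\cV_k}(x^{(\ell)}) \subseteq \Lambda_\epsilon(x^{(\ell)})$ of Lemma \ref{monotonicity}, it follows that $z^{(\ell)}$ attains the rightmost real part in $\Lambda_\epsilon^{\cV_k}(x^{(\ell)})$.

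For part \textbf{(iii)}, since $x^{(\ell)}$ is nondegenerate, Theorem \ref{eq:psa_derivs} already yields real-analyticity of $\alpha_\epsilon$ at $x^{(\ell)}$ together with the gradient formula (\ref{eq:formula_1der}) in terms of the consistent pair $u, v$ for $M$. The first task is to transfer nondegeneracy to the restriction of $A(\cdot)$ to $\cV_k$ so that Theorem \ref{eq:psa_derivs_red} applies. Uniqueness of $z^{\cV_k}(x^{(\ell)})$ follows because any rightmost point of $\Lambda_\epsilon^{\cV_k}(x^{(\ell)}) \subseteq \Lambda_\epsilon(x^{(\ell)})$ would also be rightmost for the full pseudospectrum and hence equal the unique $z(x^{(\ell)}) = z^{(\ell)}$ guaranteed by Definition \ref{assump1}. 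Simplicity of $\sigma^{\cV_k}(x^{(\ell)}, z^{(\ell)}) = \epsilon$ follows since a second unit right singular vector $V_k c'$ of $M V_k$ at the value $\epsilon$, with $c' \perp c$, would give a unit right singular vector $V_k c'$ of $M$ at $\sigma_{\min}(M) = \epsilon$ orthogonal to $v$, contradicting the simplicity in Definition \ref{assump1}.

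The remaining condition, nonsingularity of the reduced Lagrangian Hessian $\nabla_{yy}\cL^{\cV_k}(x^{(\ell)}, y^{\cV_k}(x^{(\ell)}))$, is the step I expect to be the main obstacle: $\sigma^{\cV_k}$ is a genuinely different function from $\sigma$, so its $y$-Hessian is not inherited from the full problem, and I would either retain this as a standing nondegeneracy hypothesis on the restriction or argue it from the relation between the two singular-value functions at $(x^{(\ell)}, z^{(\ell)})$. Granting it, Theorem \ref{eq:psa_derivs_red} applies, and the gradients match by direct substitution: with $v^{\cV_k} = c$ so that $V_k v^{\cV_k} = v^{(\ell)} = v$, the consistency relation $(M V_k)\, v^{\cV_k} = M v^{(\ell)} = \epsilon u$ forces $u^{\cV_k} = u$, while $\partial A^{V_k}/\partial x_j = [\partial A/\partial x_j]\, V_k$ gives $(u^{\cV_k})^\ast [\partial A^{V_k}/\partial x_j]\, v^{\cV_k} = u^\ast [\partial A/\partial x_j]\, v$ and $(u^{\cV_k})^\ast V_k v^{\cV_k} = u^\ast v$. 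Comparing (\ref{eq:formula_1der_red}) with (\ref{eq:formula_1der}) termwise then yields $\nabla\alpha_\epsilon^{\cV_k}(x^{(\ell)}) = \nabla\alpha_\epsilon(x^{(\ell)})$.
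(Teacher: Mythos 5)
Your parts \textbf{(i)} and \textbf{(ii)} coincide in substance with the paper's proof: \textbf{(i)} is exactly Lemma \ref{lemma:interpol} applied with $v^{(\ell)}\in\cV_\ell\subseteq\cV_k$, and \textbf{(ii)} is the same sandwich $\sigma_{\min}(A^{V_k}(x^{(\ell)})-z^{(\ell)}V_k)\leq\|(A(x^{(\ell)})-z^{(\ell)}I)v^{(\ell)}\|_2\leq\epsilon$ combined with monotonicity (the paper only needs the inequality, not the activeness $\sigma_{\min}(A(x^{(\ell)})-z^{(\ell)}I)=\epsilon$ you invoke, though your continuity argument for activeness is standard and correct). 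In part \textbf{(iii)}, your transfer of uniqueness and simplicity to the reduced problem, and the gradient matching via $v=V_k a$ and $u^{\cV_k}=u$, are again exactly the paper's steps (for completeness, the left-singular-vector relation $u^\ast(A^{V_k}(x^{(\ell)})-z^{(\ell)}V_k)=\sigma a^\ast$ follows by multiplying $u^\ast(A(x^{(\ell)})-z^{(\ell)}I)=\sigma v^\ast$ on the right by $V_k$).

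The single divergence is the step you flag as the main obstacle: nonsingularity of $\nabla_{yy}\cL^{\cV_k}$ at $(x^{(\ell)},y^{\cV_k}(x^{(\ell)}))$. Two observations. First, the paper does not verify this condition either: its proof concludes differentiability of $\alpha^{\cV_k}_\epsilon$ at $x^{(\ell)}$ directly from uniqueness of the rightmost point and simplicity of the reduced singular value. That is coherent with the role the Hessian condition plays in the paper: it drives the implicit-function-theorem argument behind real-analyticity and the \emph{second}-derivative formulas in Theorems \ref{eq:psa_derivs} and \ref{eq:psa_derivs_red}, whereas part \textbf{(iii)} of the lemma asserts only differentiability and equality of gradients, for which uniqueness and simplicity suffice --- this is precisely ``the part of \cite[Theorem 8.1]{Burke2003}'' that Theorem \ref{eq:psa_derivs} is said to generalize. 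Second, and more satisfyingly, your own alternative suggestion --- arguing ``from the relation between the two singular-value functions'' --- does work and removes the extra hypothesis entirely. At the optimizer, stationarity gives $\sigma_{z_2}=0$ and $\sigma_{z_1}=1/\mu$, and a direct computation then yields $\det\nabla_{yy}\cL=\sigma_{z_2 z_2}/\mu$; condition \textbf{(iii)} of Definition \ref{assump1}, the second-order necessary condition $\sigma_{z_2 z_2}\geq 0$ for (\ref{eq:opt1}), and $\mu(x^{(\ell)})=-1/(u^\ast v)>0$ together force $\sigma_{z_2 z_2}>0$. Now by Lemma \ref{monotonicity}\textbf{(i)}, $\sigma^{\cV_k}-\sigma\geq 0$ everywhere, and this difference vanishes at $(x^{(\ell)},z^{(\ell)})$ by your part \textbf{(ii)} computation; both functions are real-analytic near that point (both singular values are simple), so the difference attains an interior minimum there, its Hessian is positive semidefinite, and in particular $\sigma^{\cV_k}_{z_2 z_2}\geq\sigma_{z_2 z_2}>0$. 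Since $\mu^{\cV_k}(x^{(\ell)})=-1/(u^\ast V_k a)=-1/(u^\ast v)=\mu(x^{(\ell)})>0$, it follows that $\det\nabla_{yy}\cL^{\cV_k}=\sigma^{\cV_k}_{z_2 z_2}/\mu^{\cV_k}>0$, i.e., $x^{(\ell)}$ is a nondegenerate point of the restriction of $A(\cdot)$ to $\cV_k$, and Theorem \ref{eq:psa_derivs_red} applies exactly as you intended. With that inserted, your proposal is complete and, in all other respects, is the paper's own argument.
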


\begin{proof} 
\begin{itemize}
\item[\bf (i)]
From lines \ref{line:svk}-\ref{siter_end} of Algorithm \ref{alg1}, we have $v^{(\ell)} \in {\mathcal V}_k$, where $v^{(\ell)}$ is
a right singular vector corresponding to $\sigma_{\min}(A(x^{(\ell)})-z^{(\ell)}I)$ for the rightmost point $z^{(\ell)}$ in $\Lambda_\epsilon(x^{({\ell})})$,
for $\ell = 2, \dots k$. 
Thus, the assertion follows from Lemma \ref{lemma:interpol}.

\medskip

\item[\bf (ii)]
From Theorem \ref{thm:notempty_psa},
as $v^{(\ell)} \in {\mathcal V}_k$, we have $z^{(\ell)} \in \Lambda^{\cV_k}_\epsilon (x^{(\ell)})$, and so $\alpha_{\epsilon}(x^{(\ell)})  =  \Real(z^{(\ell)})  \leq   \alpha^{\cV_k}_{\epsilon}(x^{(\ell)})$.
But the reverse inequality   $\alpha_{\epsilon}(x^{(\ell)})  \geq   \alpha^{\cV_k}_{\epsilon}(x^{(\ell)})$  also holds due to Lemma \ref{monotonicity},
implying $\Real(z^{(\ell)})  =  \alpha^{\cV_k}_{\epsilon}(x^{(\ell)})$, i.e., $z^{(\ell)}$ is a rightmost point in $\Lambda^{\cV_k}_\epsilon(x^{(\ell)})$.

\medskip

\item[\bf (iii)]	
Differentiability of $\alpha_\epsilon (x)$ at $x = x^{(\ell)}$ is immediate, i.e., $x^{(\ell)}$ is nondegenerate,
so, from Definition \ref{assump1}, the rightmost point $z^{(\ell)}$ in $\Lambda_{\epsilon}(x^{(\ell)})$
is unique, and the smallest singular value of $\, A(x^{(\ell)}) - z^{(\ell)} I \,$ is simple. 

\medskip

\noindent
Let us show that $\alpha^{\cV_k}_{\epsilon}(x)$ is also differentiable at $x = x^{(\ell)}$. 
From part \textbf{(ii)}, $z^{(\ell)}$ is a rightmost point in $\Lambda^{\cV_k}_\epsilon(x^{(\ell)})$.
We claim that $z^{(\ell)}$ is the unique rightmost point in $\Lambda^{\cV_k}_\epsilon(x^{(\ell)})$. Suppose otherwise for the sake of contradiction,
that is there exists $\widetilde{z}\neq z^{(\ell)}$ such that  $\widetilde{z} \in \Lambda^{\cV_k}_\epsilon (x^{(\ell)})$ and
$\alpha_{\epsilon}^{\mathcal{V}_k}(x^{(\ell)}) = \Real(z^{(\ell)}) = \Real(\widetilde{z})$. 
But then, by Lemma \ref{monotonicity}, we have $\epsilon \geq \sigma_{\min}(A^{V_k}(x^{(\ell)})-\widetilde{z} \, V_k)\geq \sigma_{\min}(A(x^{(\ell)})-\widetilde{z}I)$, 
so $\widetilde{z} \in\Lambda_\epsilon(A(x^\ell))$. Consequently, $\widetilde{z}$ and $z^{(\ell)}$ are rightmost points 
in $\Lambda_\epsilon(x^{(\ell)})$, contradicting the assumption that $x^{(\ell)}$ is nondegenerate.
Additionally, following the arguments in the proof of Theorem \ref{thm:notempty_psa} 
in Appendix \ref{app:nonempty_psa_proof},
the simplicity of $\sigma_{\min}(A(x^{(\ell)})-z^{(\ell)}I)$ implies the simplicity
of $\sigma_{\min}(A^{{V}_k}(x^{(\ell)})-z^{(\ell)}V_k)$. Also, the 
right singular vector $v^{(\ell)}$ corresponding to $\sigma_{\min}(A(x^{(\ell)})-z^{(\ell)}I)$ is in ${\mathcal V}_k$,
so there is $\nu \in {\mathbb R}, \nu > 0$
such that $\Lambda^{{\mathcal V}_k}_{\epsilon}(x) \neq \emptyset$ for all $x \in {\mathcal B}(x^{(\ell)},\nu) \cap \Omega$
due to Theorem \ref{thm:notempty_psa}. 
Since $z^{(\ell)}$ is the unique rightmost point in $\Lambda^{\cV_k}_\epsilon(x^{(\ell)})$,
the singular value $\sigma_{\min}(A^{{V}_k}(x^{(\ell)})-z^{(\ell)}V_k)$ is simple, and 
$\Lambda^{{\mathcal V}_k}_{\epsilon}(x) \neq \emptyset$ for all $x$ in an open neighborhood of $x^{(\ell)}$,
we conclude that $\alpha_{\epsilon}^{\mathcal{V}_k}(x)$ is differentiable at $x=x^{(\ell)}$.
	
\medskip	
	
Finally, we show that the gradients of $\alpha_\epsilon(x)$ and $\alpha^{\cV_k}_\epsilon(x)$
are equal at $x=x^{(\ell)}$. Let $u,v$ be a consistent pair of unit left, unit right singular vectors corresponding to 
$\sigma_{\min}(A(x^{(\ell)})-z^{(\ell)}I)$. Since $v\in\mathcal{V}_k$, there is a unit vector $a$ such 
that $v=V_k a$. From the arguments in the proof of Theorem \ref{thm:notempty_psa}, the vectors $u,a$ 
must be a consistent pair of unit left, unit right singular vectors 
corresponding to $\sigma_{\min}(A^{V_k}(x^{(\ell)})-z^{(\ell)}V_k)$. 
Using the analytical formulas derived in the previous subsection for the derivatives of
$\alpha_\epsilon(\cdot)$ and $\alpha^{\cV_k}_\epsilon()$, specifically
using (\ref{eq:formula_1der})  and (\ref{eq:formula_1der_red}), we obtain
\begin{equation*}
\begin{split}
	\frac{\partial\alpha_{\epsilon}}{\partial x_j}( x^{(\ell)} )
		&	=		\Real\left(\frac{u^*  \left[ \frac{\partial \, A}{\partial x_j} (x^{(\ell)}) \right] v}{u^* v}\right) \\
	 	&	=	 	\Real\left(\frac{u^*  \left[ \frac{\partial \, A^{{V}_k}}{\partial x_j}(x^{(\ell)}) \right] a}{u^* V_k a}\right) 
							\;	=	\; 	\frac{\partial\alpha_{\epsilon}^{\mathcal{V}_k}}{\partial x_j}( x^{(\ell)} )
\end{split}							
\end{equation*}
for $j = 1, \dots, d$.
\end{itemize}
\end{proof}

\section{Convergence of the Subspace Framework}\label{sec:rate_conv}
This section concerns the convergence of the sequence $x^{(2)}, x^{(3)}, x^{(4)}, \dots $ generated 
by Algorithm \ref{alg1}. The first result below asserts that if any two of these iterates are equal, 
then global convergence is achieved.

\begin{theorem}\label{thm:gconv1}
If two points $x^{(\ell)}, x^{(k)}$ with $2 \leq \ell < k$ generated by Algorithm \ref{alg1}
are equal, then $x^{(\ell)}$ is a global minimizer of $\alpha_\epsilon(x)$ over all $x\in \underline{\Omega}$.
\end{theorem}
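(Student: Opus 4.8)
The plan is to exploit the monotonicity property (Lemma \ref{monotonicity}) together with the Hermite interpolation property (Lemma \ref{thm:hermite}) in order to collapse the chain of inequalities that governs the reduced minimization into an equality anchored at $x^{(\ell)}$. The central observation is that, by line \ref{siter_start} of Algorithm \ref{alg1}, the iterate $x^{(k)}$ is a global minimizer of the reduced objective $\alpha_\epsilon^{\mathcal{V}_{k-1}}(\cdot)$ over $\underline{\Omega}$, while the hypothesis $x^{(\ell)} = x^{(k)}$ transfers this minimizing property to $x^{(\ell)}$, a point at which the reduced and full objectives are already known to coincide.

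First I would record that $\ell < k$ together with $\ell \geq 2$ forces $k \geq 3$, hence $k-1 \geq 2$, so that Lemma \ref{thm:hermite} may be applied to the subspace $\mathcal{V}_{k-1}$ and to the index $\ell \in \{2, \dots, k-1\}$; this yields the interpolation identity $\alpha_\epsilon^{\mathcal{V}_{k-1}}(x^{(\ell)}) = \alpha_\epsilon(x^{(\ell)})$. Next, the defining property of $x^{(k)}$ as a global minimizer of $\alpha_\epsilon^{\mathcal{V}_{k-1}}(\cdot)$ gives $\alpha_\epsilon^{\mathcal{V}_{k-1}}(x^{(k)}) \leq \alpha_\epsilon^{\mathcal{V}_{k-1}}(x)$ for every $x \in \underline{\Omega}$, and Lemma \ref{monotonicity}(iii) supplies the uniform upper bound $\alpha_\epsilon^{\mathcal{V}_{k-1}}(x) \leq \alpha_\epsilon(x)$.

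Assembling these ingredients with $x^{(\ell)} = x^{(k)}$ produces the chain
\[
	\alpha_\epsilon(x^{(\ell)}) = \alpha_\epsilon^{\mathcal{V}_{k-1}}(x^{(\ell)}) = \alpha_\epsilon^{\mathcal{V}_{k-1}}(x^{(k)}) \leq \alpha_\epsilon^{\mathcal{V}_{k-1}}(x) \leq \alpha_\epsilon(x),
\]
valid for all $x \in \underline{\Omega}$, which is exactly the assertion that $x^{(\ell)}$ globally minimizes $\alpha_\epsilon(\cdot)$ over $\underline{\Omega}$ (noting that $x^{(\ell)} \in \underline{\Omega}$ by construction). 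The argument is short precisely because all of the analytic content is packaged into the two preceding lemmas; the one point that I would flag as the main obstacle is verifying that the interpolation lemma may legitimately be invoked at the \emph{smaller} subspace $\mathcal{V}_{k-1}$ rather than $\mathcal{V}_k$. This is where the inequality $\ell \leq k-1$ is essential: the right singular vector $v^{(\ell)}$ certifying interpolation at $x^{(\ell)}$ was already incorporated into $\mathcal{V}_{k-1}$ through the nesting $\mathcal{V}_\ell \subseteq \mathcal{V}_{k-1}$, so interpolation holds at the very subspace over which $x^{(k)}$ was optimized, and no auxiliary monotonicity step is needed to bridge $\mathcal{V}_{k-1}$ and $\mathcal{V}_k$.
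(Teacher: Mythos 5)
Your proof is correct and takes essentially the same route as the paper: both collapse a chain of inequalities assembled from the monotonicity lemma (Lemma \ref{monotonicity}), the Hermite interpolation property (Lemma \ref{thm:hermite}, part (i)), and the defining minimality of $x^{(k)}$ over $\underline{\Omega}$ in line \ref{siter_start}. The only difference is organizational: the paper interpolates at $\mathcal{V}_{\ell}$ and then bridges to $\mathcal{V}_{k-1}$ via subspace monotonicity, whereas you interpolate directly at $\mathcal{V}_{k-1}$ (legitimate, as you note, since $\ell \leq k-1$ guarantees $v^{(\ell)} \in \mathcal{V}_{k-1}$), which saves one application of the monotonicity lemma.
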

\begin{proof}
It follows from the monotonicity property (i.e., part \textbf{(iii)} of Lemma \ref{monotonicity}) that
\[
	\alpha^{\cV_{\ell}}_\epsilon (x^{(k)})	\;	\leq	\;	
	\alpha^{\cV_{k-1}}_\epsilon (x^{(k)})	\;	=	\;	\min_{x \in \underline{\Omega}}	\, \alpha^{\cV_{k-1}}_\epsilon(x)
		\;	\leq		\;
	\min_{x \in \underline{\Omega}}	\, \alpha_\epsilon(x).
\]
Moreover, from the interpolation property (i.e., part \textbf{(i)} of Lemma \ref{thm:hermite}), we have
\[
	\min_{x \in \underline{\Omega}}	\, \alpha_\epsilon(x) 
		\;	\leq	\;
	\alpha_\epsilon (x^{(\ell)})	 	\;	=	\;	\alpha^{\cV_{\ell}}_\epsilon (x^{(\ell)})
							\;	=	\;	\alpha^{\cV_\ell}_\epsilon (x^{(k)}).
\]
Combining the inequalities above,
$\min_{x \in \underline{\Omega}}	\, \alpha_\epsilon(x) \,	=	\,	\alpha^{\cV_\ell}_\epsilon (x^{(k)})	
				\,	=	\,		\alpha_\epsilon (x^{(\ell)})$
as desired.
\end{proof}
\smallskip
An important conclusion about the global convergence of Algorithm \ref{alg1} that can be drawn from
Theorem \ref{thm:gconv1} is as follows.
The subspace $\cV_{\ell}$ is a subspace of ${\mathbb C}^n$ and contains $\cV_{\ell-1}$ 
for every $\ell > 1$. As a result, we must have $\cV_{\ell} = \cV_{\ell-1}$ (so that $x^{(\ell+1)} = x^{(\ell)}$)
for some $\ell > 1$. Theorem \ref{thm:gconv1} implies $x^{(\ell)}$ for such an $\ell$ is a global 
minimizer of $\alpha_\epsilon(x)$ over $x \in \underline{\Omega}$. 

\smallskip

The rest of this section is devoted to proving that the rate of convergence of 
Algorithm \ref{alg1} when $d=1$ is superlinear under mild assumptions. 
We can generalize the arguments and the result when $d > 1$ provided that additional singular vectors 
are put into the subspace at every iteration at points close to the interpolation point employed by Algorithm \ref{alg1};
this extension is similar to the extension of Algorithm 1 to Algorithm 2 in \cite{Kangal2018} in
the context of minimizing the $j$th largest eigenvalue for a prescribed $j$. We first focus on
the case $d = 1$ for the sake of simplicity. In a subsection at the end of the section, we formally 
introduce its extension to attain superlinear convergence in theory when $d > 1$, and outline 
why the convergence results for Algorithm \ref{alg1} applies to this extension

Throughout, it is assumed that three consecutive iterates $ x^{(k-1)}, x^{(k)}, x^{(k+1)} $
generated by Algorithm \ref{alg1} for $d=1$ are sufficiently close to $x_*$,
a global minimizer of $\alpha_\epsilon(x)$ over $x \in \Omega$. 
An additional assumption that is kept throughout is that $x_\ast$ is strictly in the
interior of $\underline{\Omega}$. Moreover, by $z_\ast$
we denote a point in $\Lambda_{\epsilon}(x_\ast)$ satisfying $\alpha_\epsilon(x_\ast) = \Real(z_\ast)$,
whereas ${\mathcal R}(z_\ast) := (\Real(z_\ast) , \Imag(z_\ast)) \in {\mathbb R}^2$.
Furthermore, we assume the nondegeneracy of the point $x_\ast$, which
is stated formally below.

\begin{assumption}[Nondegeneracy of Optimizer]\label{ass:optimality}
The point $x_\ast$ is in the interior of $\underline{\Omega}$, and
is a nondegenerate point, that is the following conditions are satisfied by 
$x_\ast$, $z_\ast$:
\begin{itemize}
	\item [\bf (i)] $z_\ast$ is the unique point in $\Lambda_\epsilon(x_\ast)$
	such that  $\Real(z_\ast) = \alpha_\epsilon(x_\ast)$.
	\item [\bf (ii)] the smallest singular value $\sigma(x_\ast,{\mathcal R}(z_\ast))$ of $A(x_\ast, {\mathcal R}(z_\ast))$
	is simple.
	\item[\bf (iii)] $\nabla^2_{yy} {\mathcal L}(x_\ast , y_\ast)$ is invertible, where
	${\mathcal L}(x, y)$ is the Lagrangian as in (\ref{eq:Lagrange}) with $y = (z, \mu)$,
	and $y_\ast = ({\mathcal R}(z_\ast) , \mu_\ast)$ with $\mu_\ast \in {\mathbb R}$
	represents the optimal point satisfying $\nabla_{y}\cL(x_\ast, y_\ast) = 0$.
\end{itemize}
\end{assumption}

By Theorem \ref{eq:psa_derivs}, nondegeneracy assumption above for $x_\ast$
guarantees that the pseudospectral abscissa map $x \mapsto \alpha_{\epsilon}(x)$ is 
real analytic at $x = x_\ast$. 
We keep another assumption throughout this section,
which concerns the second derivatives of the Lagrangian and its reduced
counterpart. Note that, throughout this section, $y_\ast =  (z_\ast , \mu_\ast)$
with $\mu_\ast \in {\mathbb R}$ is as in Assumption \ref{ass:optimality}, that is it represents
the unique $y_\ast$ satisfying $\nabla_{y}\cL(x_\ast, y_\ast) = 0$.

\begin{assumption}[Robust Nondegeneracy]\label{ass:optimality2} 
For a given constant $\gamma>0$, the following assertions hold:
\[
	\sigma_{\min}\left(\nabla_{yy}^2\cL(x_\ast, y_\ast) \right) \;		\geq	\;	 \gamma 
		\quad\;\;	 \text{and} 	\quad\;\; 
	\sigma_{\min}\left(\nabla_{yy}^2\cL^{\cV_k}(x_\ast , y_\ast) \right) \geq \gamma.
\]
\end{assumption}

We start our derivation of the rate of convergence with a lemma that asserts that the pseudospectral 
abscissa functions of the full and the reduced problems are attained at a unique smooth optimizer (i.e., a unique 
right-most point in pseudospectra, that is smooth with respect to $x$) around $x_\ast$ 
under Assumptions \ref{ass:optimality} and \ref{ass:optimality2}. We omit its proof, as the proof of its part \textbf{(i)}
is similar to \cite[Proposition 2.9]{Kangal2018}, and proofs of parts \textbf{(ii)}-\textbf{(iii)} to \cite[Lemma 15]{Mengi2018}.
\begin{lemma}\label{lemma:uniqness}
Suppose that Assumption \ref{ass:optimality} and \ref{ass:optimality2} hold. There exist 
$\nu_x, \nu_z,\nu_\mu>0$ independent of $k$ such that $\mathcal{B}(x_\ast,\nu_x) \subseteq \underline{\Omega}$
that satisfy the following:

\begin{itemize}
\item[\bf (i)] The singular value functions $\sigma(x,z)$ and  $\sigma^{\cV_k}(x,z)$ are 
simple, and their first three derivatives in absolute value are bounded above by constants 
uniformly for all $x \in \mathcal B(x_\ast, \nu_x)$, $z \in \mathcal B({\mathcal R}(z_\ast), \nu_z)$,  
where the constants are independent of $k$.

\item[\bf (ii)] There exists a real-analytic function 
\[
	x \in \mathcal{B}(x_\ast , \nu_x) \: \mapsto \: 
		y(x) = (z(x),\mu(x)) \in \mathcal{B}({\mathcal R}(z_\ast) , \nu_z) \times \mathcal{B}(\mu_\ast, \nu_\mu)
\]
such that $\alpha_\epsilon(x)=z_1(x)$ with $z_1(x)$ denoting the first component of $z(x) \in {\mathbb R}^2$, as well as
\begin{equation}
	\nabla_y\cL(x,y(x)) \; = \; 0 	\quad\;\;	 \text{and} 	\quad\;\; 
			\sigma_{\min}(\nabla_{yy}^2\cL(x,y(x)) ) \; \geq \; \gamma/2
	\end{equation}
for all $x \in  \mathcal{B}(x_\ast,\nu_x)$.

\item[\bf (iii)] 
There exists a real-analytic function 
\[						
	x \in \mathcal{B}(x_\ast , \nu_x) \: \mapsto \: 
  y^{\cV_k}(x) = (z^{\cV_k}(x),\mu^{\cV_k}(x)) \in  \mathcal B({\mathcal R}(z_\ast), \nu_z) \times \mathcal B(\mu_\ast, \nu_\mu)
\] 
such that $\alpha_\epsilon^{\cV_k}(x)=z^{\cV_k}_1(x)$ with
$z^{\cV_k}_1(x)$ denoting the first component of $z^{\cV_k}(x) \in {\mathbb R}^2$, and
\begin{equation}
	\nabla_y\cL^{\cV_k}(x,y^{\cV_k}(x)) \;	 = \; 0 	\quad\;\;	 \text{and} 	\quad\;\; 
		\sigma_{\min}(\nabla_{yy}^2\cL^{\cV_k}(x,y^{\cV_k}(x)) ) \; \geq \; \gamma/2
\end{equation}
for all $x \in  \mathcal{B}(x_\ast,\nu_x)$.

\end{itemize}
\end{lemma}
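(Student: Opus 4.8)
The plan is to treat part \textbf{(i)} as a regularity prerequisite, and then obtain parts \textbf{(ii)}--\textbf{(iii)} by applying the analytic implicit function theorem to the optimality system $\nabla_y\cL=0$, with the essential twist that part \textbf{(iii)} must be carried out uniformly in $k$.

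For part \textbf{(i)}, I would begin from Assumption \ref{ass:optimality}\textbf{(ii)}, which makes $\sigma(x_\ast,z_\ast)$ simple, so there is a gap $g := \sigma_2(x_\ast,z_\ast)-\sigma(x_\ast,z_\ast)>0$ between the two smallest singular values of $A(x_\ast)-z_\ast I$. Since the entries of $A(x)-zI$ depend real-analytically on $(x,z)$ and singular values are $1$-Lipschitz in the matrix, this gap persists (say, at least $g/2$) on a ball $\mathcal{B}(x_\ast,\nu_x)\times\mathcal{B}(z_\ast,\nu_z)$; there $\sigma(x,z)$ is simple, hence real-analytic by \cite[Theorem 1.8]{Kato1995}, and its first three derivatives, being continuous, are bounded on a slightly smaller closed ball. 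The reduced case is where uniformity in $k$ enters. I would combine monotonicity (Lemma \ref{monotonicity}), giving $\sigma^{\cV_k}(x,z)\geq\sigma(x,z)$, with Cauchy interlacing for the compression $V_k^\ast(A(x)-zI)^\ast(A(x)-zI)V_k$, which yields $\sigma^{\cV_k}_2(x,z)\geq\sigma_2(x,z)$. The interpolation property of Algorithm \ref{alg1} forces $\sigma^{\cV_k}(x^{(k)},z^{(k)})=\sigma(x^{(k)},z^{(k)})=\epsilon$ at the base point, and since $x^{(k)}\to x_\ast$, $z^{(k)}\to z_\ast$, a Lipschitz estimate gives $\sigma^{\cV_k}(x_\ast,z_\ast)\to\sigma(x_\ast,z_\ast)$; hence the reduced gap is bounded below by $g/2$ uniformly in $k$. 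Simplicity and the uniform derivative bounds for $\sigma^{\cV_k}$ then follow exactly as in the full case, with constants governed only by the full-problem gap and by $\|A_1\|_2,\dots,\|A_\kappa\|_2$.

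For part \textbf{(ii)}, consider the map $F(x,y):=\nabla_y\cL(x,y)$. By the first-order optimality conditions at the optimizer, $F(x_\ast,y_\ast)=0$, and its Jacobian in $y$ is $\nabla^2_{yy}\cL(x_\ast,y_\ast)$, invertible with $\sigma_{\min}\geq\gamma$ by Assumption \ref{ass:optimality2}. Since $\cL$ is real-analytic near $(x_\ast,y_\ast)$ by part \textbf{(i)}, the analytic implicit function theorem produces a unique real-analytic $x\mapsto y(x)=(z(x),\mu(x))$ on some $\mathcal{B}(x_\ast,\nu_x)$ with $y(x_\ast)=y_\ast$ and $\nabla_y\cL(x,y(x))=0$; shrinking $\nu_x$ and using continuity of the singular values of the Hessian gives $\sigma_{\min}(\nabla^2_{yy}\cL(x,y(x)))\geq\gamma/2$. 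The remaining point is to certify that this critical branch is the global inner maximizer, i.e.\ $\alpha_\epsilon(x)=\Real(z(x))$. I would argue this by continuity and uniqueness: the true rightmost point $\widehat z(x)$ of $\Lambda_\epsilon(x)$ is itself a KKT point converging to $z_\ast$ as $x\to x_\ast$ (by upper semicontinuity of the pseudospectrum and the uniqueness in Assumption \ref{ass:optimality}\textbf{(i)}), so for $x$ near $x_\ast$ it must coincide with the unique nearby critical point $z(x)$, giving $\alpha_\epsilon(x)=\Real(\widehat z(x))=\Real(z(x))$.

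Part \textbf{(iii)} repeats this scheme for $\cL^{\cV_k}$, and the main obstacle is making every constant, in particular the radius $\nu_x$, independent of $k$. The key observation is that $y_\ast$ need not be a reduced critical point, but the interpolation point supplies one: by Lemma \ref{thm:hermite}, the pair $(x^{(k)},y^{(k)})$ with $y^{(k)}=(z^{(k)},\mu^{(k)})$ satisfies $\nabla_y\cL^{\cV_k}(x^{(k)},y^{(k)})=0$, lies arbitrarily close to $(x_\ast,y_\ast)$, and has a simple reduced singular value by part \textbf{(i)}. Anchoring the implicit function theorem at this base point and invoking its quantitative version, whose domain of validity is controlled only through a lower bound on $\sigma_{\min}(\nabla^2_{yy}\cL^{\cV_k})$ (from Assumption \ref{ass:optimality2}, propagated off $x_\ast$ via the uniform Hessian-Lipschitz bound of part \textbf{(i)}) and an upper bound on the second derivatives of $\cL^{\cV_k}$ (again part \textbf{(i)}), yields a real-analytic branch $y^{\cV_k}(x)$ on a ball of $k$-independent radius; the certification $\alpha^{\cV_k}_\epsilon(x)=\Real(z^{\cV_k}(x))$ and the bound $\sigma_{\min}(\nabla^2_{yy}\cL^{\cV_k}(x,y^{\cV_k}(x)))\geq\gamma/2$ then follow as in part \textbf{(ii)}, uniformly in $k$. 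I expect the genuinely delicate steps to be the uniform spectral-gap estimate in part \textbf{(i)} and the $k$-independent radius coming out of the quantitative implicit function theorem in part \textbf{(iii)}.
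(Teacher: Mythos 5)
The paper itself omits the proof of Lemma \ref{lemma:uniqness}, deferring to the analogous \cite[Proposition 2.9]{Kangal2018} for part \textbf{(i)} and \cite[Lemma 15]{Mengi2018} for parts \textbf{(ii)}--\textbf{(iii)}; your reconstruction is correct and takes essentially that same route. Your part \textbf{(i)} (Weyl/Lipschitz persistence of the singular-value gap, Cauchy interlacing of the compression $V_k^\ast(A(x)-zI)^\ast(A(x)-zI)V_k$ for a $k$-uniform gap, and the interpolation identity $\sigma^{\cV_k}(x^{(k)},z^{(k)})=\sigma(x^{(k)},z^{(k)})=\epsilon$) and your parts \textbf{(ii)}--\textbf{(iii)} (analytic implicit function theorem applied to the KKT system $\nabla_y\cL=0$, resp.\ $\nabla_y\cL^{\cV_k}=0$, anchored for the reduced problem at the interpolation point $(x^{(k)},y^{(k)})$ --- which is an exact reduced KKT point by Lemma \ref{thm:hermite} --- with $k$-independent radius supplied by Assumption \ref{ass:optimality2} and the part-\textbf{(i)} bounds) are precisely the arguments used in those references and in the paper's Appendix A, and the few steps you leave implicit (that $z^{(k)}\rightarrow z_\ast$, via compactness and the uniqueness in Assumption \ref{ass:optimality}\textbf{(i)}, and the $k$-uniform identification of the reduced rightmost point with the implicit-function branch, via $\Lambda^{\cV_k}_\epsilon(x)\subseteq\Lambda_\epsilon(x)$ and the interpolation-based lower bound on $\alpha^{\cV_k}_\epsilon$) are routine consequences of Lemmas \ref{monotonicity} and \ref{thm:hermite}.
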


For the main rate-of-convergence result, as stated formally below, we also assume that the angle 
between the left and right singular vectors of $\sigma^{\cV_k}(x, z^{\cV_k}(x))$ obeys a certain 
bound on ${\mathcal B}(x_\ast, \nu_x)$, the interval in Lemma \ref{lemma:uniqness}. 
Specifically, letting $u(x)$ and $v(x)$ be a pair of consistent unit left and right singular vectors corresponding to 
$\sigma(x, z(x))$, following the arguments in Section \ref{sec:psa_derivatives}, in particular by an application of 
the first order optimality conditions to the constrained optimization characterization in (\ref{eq:opt1}) of $\alpha_{\epsilon}(x)$,
we have $u(x)^\ast v(x) < 0$ for all $x \in {\mathcal B}(x_\ast, \nu_x)$ so that
\begin{equation}\label{eq:bound_svecs}
	\underline{\beta}	:=		\max_{x \in {\mathcal B}(x_\ast, \nu_x)} u(x)^\ast v(x)	\;	<	\;	0.
\end{equation}
Similarly, it can be shown that $u^{V_k}(x)^\ast V_k v^{V_k}(x) < 0$ for 
all $x \in {\mathcal B}(x_\ast, \nu_x)$, where $u^{V_k}(x)$ and $v^{V_k}(x)$ denote a consistent
pair of unit left and right singular vectors of $A^{V_k}(x, z^{\cV_k}(x))$ corresponding to 
$\sigma^{\cV_k}(x, z^{\cV_k}(x))$.
Indeed, let $\widetilde{x} \in {\mathcal B}(x_\ast, \nu_x)$.
Exploiting $\nabla_y\cL^{\cV_k}(\widetilde{x},y^{\cV_k}(\widetilde{x})) \;	 = \; 0$ with
$\cL^{\cV_k}(x,z,\mu)  \;  :=  \;  z_1-\mu(\sigma^{\cV_k}(x,z)-\epsilon)$, we deduce
 \begin{equation*}
 	\begin{split}
0	\;	&	=	\;
1 - \mu^{\cV_k}(\widetilde{x}) \Real\left(  u^{V_k}(\widetilde{x})^\ast 
\frac{\partial}{\partial z_1}  \left[ A^{V_k}(x,z) \right] \bigg|_{\widetilde{x} , z^{\cV_k}(\widetilde{x}) } v^{V_k}(\widetilde{x})  \right) \\
	\;	&	=	\;	 1+\mu^{\cV_k} (\widetilde{x}) \Real\left(  u^{V_k}(\widetilde{x})^\ast V_k v^{V_k}(\widetilde{x}) \right)
	\end{split}
\end{equation*}
and 
\begin{equation*}
\begin{split}
0	\;	&	=	\;	 - \mu^{\cV_k}(\widetilde{x}) \Real\left(  u^{V_k}(\widetilde{x})^\ast , \frac{\partial}{\partial z_2}  \left[ A^{V_k}(x,z) \right] \bigg|_{\widetilde{x} , z^{\cV_k}(\widetilde{x}) } v^{V_k}(\widetilde{x})  \right) 	\\
		&	=	\;	 -\mu^{\cV_k}(\widetilde{x}) \Imag(  u^{V_k}(\widetilde{x})^\ast V_k v^{V_k}(\widetilde{x})  ).
\end{split}
\end{equation*}
From the first equation, we have $\mu^{\cV_k} (\widetilde{x}) \neq 0$. As a result, from the second equation, 
$\Imag(  u^{V_k}(\widetilde{x})^\ast V_k v^{V_k}(\widetilde{x})  ) = 0$,
which shows $ u^{V_k}(\widetilde{x})^\ast V_k v^{V_k}(\widetilde{x}) $ is real. 
Additionally, $ u^{V_k}(\widetilde{x})^\ast V_k v^{V_k}(\widetilde{x}) \neq 0$ due to the first equation.
Moreover, from the complementary conditions as $\mu^{\cV_k} (\widetilde{x}) \neq 0$,
we have $\sigma^{\cV_k}(\widetilde{x},z^{{\mathcal V}_k}(\widetilde{x})) = \epsilon$, so
$z^{{\mathcal V}_k}(\widetilde{x})$ is the rightmost point in $\Lambda^{{\mathcal V}_k}_\epsilon(\widetilde{x})$, 
which in turn implies
\[
	0  \; \leq  \; 
	\frac{ \partial}{\partial z_1} 
			\left[ \sigma_{\min} \left( A^{V_k}(x,z) \right) \right] \bigg|_{\widetilde{x}, z^{\cV_k}(\widetilde{x})}
							\;	=	\;	 
		 -u^{V_k}(\widetilde{x})^\ast V_k v^{V_k}(\widetilde{x}) .
\]
Combining this with $u^{V_k}(\widetilde{x})^\ast V_k v^{V_k}(\widetilde{x})  \neq 0$ yields 
$0 >  u^{V_k}(\widetilde{x})^\ast V_k v^{V_k}(\widetilde{x})$ as desired.

Below, we require slightly more, i.e., there is a negative upper bound independent of $k$
on $u^{V_k}(x)^\ast V_k v^{V_k}(x)$ over all $x \in {\mathcal B}(x_\ast, \nu_x)$.

\begin{assumption}\label{ass:optimality3}
For a given constant $\beta$ small enough in absolute value and such that $\underline{\beta} \leq \beta < 0$, 
the subspace $\cV_k$ is such that
\[
	\beta 	\;  \geq  \;		\max_{x \in {\mathcal B}(x_\ast, \nu_x)} u^{V_k}(x)^\ast V_k v^{V_k}(x),
\]
where ${\mathcal B}(x_\ast, \nu_x)$ is as in Lemma \ref{lemma:uniqness}.
\end{assumption}


The next result concerns the uniform boundedness of the derivatives of 
$\alpha_\epsilon(x)$ and $\alpha^{\cV_k}_\epsilon(x)$.

\begin{lemma}\label{lemma:diff}
Suppose that the conditions of Assumptions \ref{ass:optimality}, \ref{ass:optimality2} and \ref{ass:optimality3} hold. 
There exists $\nu_x$ independent of $k$ such that ${\mathcal B}(x_\ast, \nu_x) \subseteq \underline{\Omega}$,
and the following assertions hold:
\begin{enumerate}
	\item[\bf (i)] The pseudospectral functions $\alpha_\epsilon(x)$, 
	$\alpha_\epsilon^{\cV_k}(x)$ are real analytic at all $x\in \mathcal B(x_\ast, \nu_x)$. 
	\item[\bf (ii)] The first three derivatives of $\alpha_\epsilon(x)$ and $\alpha_\epsilon^{\cV_k}(x)$
	in absolute value are bounded above by constants uniformly for all $x\in \mathcal B(x_\ast, \nu_x)$, 
	where the constants are independent of $k$.
\end{enumerate}
\end{lemma}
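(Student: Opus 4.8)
The plan is to read off both claims from the implicitly defined, real-analytic optimizer maps furnished by Lemma \ref{lemma:uniqness}, and then to bound their derivatives uniformly in $k$ by implicit differentiation. Recall that $d=1$ throughout this section, so all derivatives below are ordinary scalar derivatives. Claim \textbf{(i)} is essentially immediate: by parts \textbf{(ii)} and \textbf{(iii)} of Lemma \ref{lemma:uniqness}, on a common ball $\mathcal{B}(x_\ast,\nu_x) \subseteq \underline{\Omega}$ with $\nu_x$ independent of $k$ there are real-analytic maps $x \mapsto y(x) = (z(x),\mu(x))$ and $x \mapsto y^{\cV_k}(x) = (z^{\cV_k}(x),\mu^{\cV_k}(x))$ satisfying $\alpha_\epsilon(x) = \Real(z(x)) = z_1(x)$ and $\alpha_\epsilon^{\cV_k}(x) = \Real(z^{\cV_k}(x)) = z_1^{\cV_k}(x)$. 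Since $z_1(x)$ is a coordinate of the real-analytic map $x \mapsto z(x)$, and similarly for the reduced case, real analyticity of $\alpha_\epsilon$ and $\alpha_\epsilon^{\cV_k}$ on $\mathcal{B}(x_\ast,\nu_x)$ follows at once.

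For claim \textbf{(ii)}, since $\alpha_\epsilon(x) = z_1(x)$ is a single coordinate of $y(x)$, it suffices to bound the first three derivatives of $y(x)$ uniformly, and likewise for $y^{\cV_k}(x)$. I would differentiate the stationarity relation $\nabla_y\cL(x,y(x)) = 0$ from Lemma \ref{lemma:uniqness}\textbf{(ii)}. A first differentiation gives $y'(x) = -[\nabla^2_{yy}\cL]^{-1}\,\nabla^2_{yx}\cL$, evaluated at $(x,y(x))$, and differentiating once and then twice more expresses $y''(x)$ and $y'''(x)$ as finite sums of products of: (a) the inverse Hessian $[\nabla^2_{yy}\cL]^{-1}$, whose norm is at most $2/\gamma$ by Lemma \ref{lemma:uniqness}\textbf{(ii)} together with Assumption \ref{ass:optimality2}; (b) partial derivatives of $\cL$ up to third order, which by the entries displayed in Theorem \ref{eq:psa_derivs} are of the form $\mu$ times a partial derivative of $\sigma$ up to third order; and (c) the already-bounded lower-order derivatives of $y$. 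Derivatives of the inverse Hessian are handled by $\frac{d}{dx}M^{-1} = -M^{-1}M' M^{-1}$, so they too reduce to the inverse-Hessian bound and third-order derivatives of $\sigma$. All these ingredients are bounded uniformly in $k$: the derivatives of $\sigma$ up to order three are uniformly bounded by Lemma \ref{lemma:uniqness}\textbf{(i)}, while $\mu(x)$ stays in $\mathcal{B}(\mu_\ast,\nu_\mu)$ with $\mu_\ast = -1/\big(u(x_\ast)^\ast v(x_\ast)\big) > 0$ (recall $u(x_\ast)^\ast v(x_\ast) < 0$ by (\ref{eq:bound_svecs})), so $\mu(x)$ is bounded away from $0$ and $\infty$ for $\nu_\mu$ small. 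A straightforward induction on the derivative order then yields uniform bounds on $y'(x),y''(x),y'''(x)$, hence on the first three derivatives of $\alpha_\epsilon$.

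The reduced case is identical after replacing $\cL,\sigma,y$ by $\cL^{\cV_k},\sigma^{\cV_k},y^{\cV_k}$ and invoking the reduced derivative formulas of Theorem \ref{eq:psa_derivs_red}. The only point requiring care, and where Assumption \ref{ass:optimality3} is used, is the uniform-in-$k$ control of the reduced multiplier and of the denominators in (\ref{eq:formula_1der_red}): Assumption \ref{ass:optimality3} gives $u^{\cV_k}(x)^\ast V_k v^{\cV_k}(x) \leq \beta < 0$ uniformly for $x \in \mathcal{B}(x_\ast,\nu_x)$, whence $|\mu^{\cV_k}(x)| = 1/\big|u^{\cV_k}(x)^\ast V_k v^{\cV_k}(x)\big| \leq 1/|\beta|$ and the denominators in (\ref{eq:formula_1der_red}) stay bounded away from $0$, both uniformly in $k$. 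Combined with the uniform bounds of Lemma \ref{lemma:uniqness}\textbf{(i)} on the derivatives of $\sigma^{\cV_k}$ and the uniform bound $\|[\nabla^2_{yy}\cL^{\cV_k}]^{-1}\| \leq 2/\gamma$ from Lemma \ref{lemma:uniqness}\textbf{(iii)}, every constant appearing in the estimates is independent of $k$. The main obstacle I anticipate is organizational rather than conceptual: controlling the combinatorial growth of terms in $y'''(x)$, especially those arising from two differentiations of $[\nabla^2_{yy}\cL]^{-1}$, and checking that each summand is bounded by a $k$-independent constant; this is precisely where the uniformity statements of Lemma \ref{lemma:uniqness}\textbf{(i)} and Assumption \ref{ass:optimality3} do the essential work.
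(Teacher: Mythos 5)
Your part \textbf{(i)} and your estimates for the first two derivatives in part \textbf{(ii)} are sound, but there is a genuine gap at the third-derivative step. You propose to bound $\alpha_\epsilon''' = z_1'''$ by bounding the full vector $y'''(x)$, obtained by differentiating the stationarity identity $\nabla_y\cL(x,y(x))=0$ three times, and you assert that the resulting expression involves partial derivatives of $\cL$ only up to third order. That count is off by one: the identity is already first order in $\cL$, so its third total derivative in $x$ contains \emph{fourth}-order partials of $\cL$ (terms such as $\partial_x^3\nabla_y\cL$ and $\nabla^4_{yyyy}\cL$ contracted with three copies of $y'$). Since $\cL(x,z,\mu)=z_1-\mu(\sigma(x,z)-\epsilon)$ is affine in $\mu$, these fourth-order partials are, up to sign, either $\mu$ times fourth-order derivatives of $\sigma$ or third-order derivatives of $\sigma$. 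Lemma \ref{lemma:uniqness}\textbf{(i)} supplies uniform-in-$k$ bounds only on the first \emph{three} derivatives of $\sigma$ and $\sigma^{\cV_k}$. For the full function $\sigma$ one could recover fourth-derivative bounds by shrinking the ball and invoking analyticity, but for the reduced functions $\sigma^{\cV_k}$, which change with $k$, no such uniform control is available from the tools you cite, so your estimate for $[\alpha_\epsilon^{\cV_k}]'''$ cannot be completed. (Your second-derivative step is safe precisely because two differentiations of the stationarity system only reach third-order partials.)

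The paper avoids this by never differentiating the stationarity system three times: it starts from the envelope identity $\nabla\alpha_\epsilon(x)=\nabla_x\cL(x,y(x))$, equivalently the closed forms (\ref{eq:formula_1der}) and (\ref{eq:psa_2der}), which lowers every derivative count by one. With this identity, $\alpha_\epsilon''$ involves only second-order partials of $\cL$, $y'$, and the inverse Hessian, while $\alpha_\epsilon'''$ involves only third-order partials of $\cL$ together with $y'$ and $y''$ --- and $y''$ itself needs only third-order partials, via a single differentiation of the implicit-function relation. All of these ingredients are covered uniformly in $k$ by Lemma \ref{lemma:uniqness}, by Assumption \ref{ass:optimality2} (giving $\|[\nabla^2_{yy}\cL]^{-1}\|_2\le 2/\gamma$ and its reduced analogue), and by (\ref{eq:bound_svecs}) together with Assumption \ref{ass:optimality3} (giving $|\mu(x)|,\,|\mu^{\cV_k}(x)|\le 1/|\beta|$). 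Your argument becomes correct --- and then essentially coincides with the paper's --- if you replace ``bound $y'''$'' by ``differentiate the envelope identity twice,'' keeping the rest of your uniform-in-$k$ bookkeeping, which is otherwise accurate.
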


\begin{proof}
	The real-analyticity of $\alpha_\epsilon(x)$ and $\alpha_\epsilon^{\cV_k}(x)$
	uniformly in an interval $\mathcal B(x_\ast, \nu_x)$ for some $\nu_x$ that is independent of $k$
	is immediate from Lemma \ref{lemma:uniqness}.

	For the proof of boundedness of the first derivative of $\alpha_\epsilon(x)$,  
	noting $u(x)^*v(x) \leq \beta < 0$ for $x \in \mathcal{B}(x_\ast, \nu_x)$
	due to (\ref{eq:bound_svecs}), and using
	the formulas (\ref{eq:formula_1der}) for the derivatives of $x \mapsto \alpha_{\epsilon}(x)$,
	at any $\widetilde{x} \in \mathcal{B}(x_\ast, \nu_x)$, we have
	\[
		 \alpha'_{\epsilon}(\widetilde{x})
					\;	=	\;
			\text{Re}\left(
				\sum_{\ell=1}^{\kappa}\frac{\mathrm{d} f_\ell({\widetilde{x}})}{\mathrm{d} x}
					\frac{u(\widetilde{x})^\ast A_\ell v(\widetilde{x})}{u(\widetilde{x})^\ast v(\widetilde{x})}
			\right)	
					\;\;	\Longrightarrow		\;\;\;
			\abs{ \alpha'_{\epsilon}({\widetilde{x}}) }
			\;	\leq		\;
			\sum_{\ell=1}^{\kappa}\abs{\frac{\mathrm{d} f_\ell({\widetilde{x}})}{\mathrm{d} x}} \frac{ \| A_\ell \|_2}{ | \beta |} ,
	\]
	implying  the boundedness of $\abs{\alpha'_{\epsilon}(x)}$ on $\mathcal B(x_\ast, \nu_x)$, 
	as $f_\ell$ is real analytic for $ \ell = 1, \dots, \kappa$. As for the second derivatives of $\alpha_\epsilon(x)$, 
	first observe
	\begin{equation}\label{eq:mux}
		\mu(x) = -\frac{1}{u(x)^*v(x)} \; \leq \; \frac{1}{-\beta}
	\end{equation}
	for all $x\in \mathcal B(x_\ast,\nu_x)$.
	Moreover, from part \textbf{(ii)} of Lemma \ref{lemma:uniqness}, we have
	\begin{equation}\label{eq:sigminx}
		\sigma_{\min}(\nabla_{yy}^2\cL(x,y(x)))  \; \geq \;  \gamma/2
	\end{equation}
	for all $x \in  \mathcal B(x_\ast,\nu_x)$,
	where $\gamma$ is as in Assumption \ref{ass:optimality2}.
	Formula (\ref{eq:psa_2der}) for the second derivative of $\alpha_\epsilon(x)$ yields
	\begin{equation}\label{eq:psa_sder_abs}
		\left|   \alpha''_\epsilon(x) \right|  \; \leq \; | \cL_{xx} (x,y(x))  |
			+\left\|\nabla_{xy}^2\cL(x,y(x))\right\|_2^2 \Vert [\nabla_{yy}^2\cL(x,y(x))]^{-1} \Vert_2,
	\end{equation}
	where $\cL_{xx} (x,y)$ is the second derivative of $\cL(x,y)$ with respect to $x$.
	Now, since the derivatives  $\cL_{xx} (x,y(x))$
	and $\nabla_{xy}^2\cL(x,y(x))$ can be expressed fully in terms of $\mu(x)$ and the partial derivatives 
	of $\sigma(x,y(x)),$ the boundedness of $\alpha''_{\epsilon}(x)$
	 follows from (\ref{eq:psa_sder_abs}) by employing \eqref{eq:mux}, \eqref{eq:sigminx} and part \textbf{(i)} of
	Lemma \ref{lemma:uniqness}. 
	
	The uniform boundedness of the third derivatives of $\alpha_{\epsilon}(x)$, and the boundedness of 
	the derivatives of the reduced pseduospectral abscissa function can be shown in a similar way.
\end{proof}

Interpolation properties between $\alpha_\epsilon (x)$ and $\alpha^{\cV_k}_\epsilon (x)$ and their
first derivatives hold at $x^{(k)}$. Even if these interpolation properties do not extend to the second derivatives, 
the second derivatives must be close at $x^{(k)}$ as shown next.

\begin{lemma}[Proximity of the Second Derivatives]\label{eq:prox_2der}
Suppose that Assumptions \ref{ass:optimality}, \ref{ass:optimality2} and \ref{ass:optimality3} are satisfied.
There exists a constant $C > 0$ independent of $k$ such that
		\begin{equation}\label{lm:slc0}
		\big|	\alpha''_{\epsilon} (x^{(k)}) - 
					[ \alpha_{\epsilon}^{\cV_k} ]'' (x^{(k)})	  \big|
							\:	\leq	\:	 C	 \big| x^{(k)}-x^{(k-1)}  \big| \: .
		\end{equation}
\end{lemma}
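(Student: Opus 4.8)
The plan is to exploit the key structural fact that, by the construction in Algorithm \ref{alg1}, the subspace $\cV_k$ contains the right singular vectors associated with \emph{both} interpolation points $x^{(k-1)}$ and $x^{(k)}$: indeed $v^{(k)} \in \cV_k$ directly, and $v^{(k-1)} \in \cV_{k-1} \subseteq \cV_k$. I would define the scalar error function $e(x) := \alpha_\epsilon(x) - \alpha_\epsilon^{\cV_k}(x)$ on the interval ${\mathcal B}(x_\ast, \nu_x)$ furnished by Lemma \ref{lemma:uniqness}. Since the three consecutive iterates are assumed sufficiently close to $x_\ast$, both $x^{(k-1)}$ and $x^{(k)}$ lie in ${\mathcal B}(x_\ast, \nu_x)$, where, by Lemma \ref{lemma:uniqness}, they are nondegenerate points for both the full and the reduced problems. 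Hence the Hermite interpolation Lemma \ref{thm:hermite}, parts \textbf{(i)} and \textbf{(iii)}, applies at each of these two points, yielding in particular the first-derivative matching
\[
	e'(x^{(k-1)}) \; = \; e'(x^{(k)}) \; = \; 0 ,
\]
which is the only interpolation identity the argument truly needs.

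Next I would apply Rolle's theorem to $e'$: as $e$ is real-analytic on ${\mathcal B}(x_\ast, \nu_x)$ by Lemma \ref{lemma:diff} part \textbf{(i)}, so is $e'$, and since $e'$ vanishes at the two distinct points $x^{(k-1)}$ and $x^{(k)}$ (we may assume $x^{(k)} \neq x^{(k-1)}$, as otherwise Algorithm \ref{alg1} has already converged to a global minimizer by Theorem \ref{thm:gconv1}), there is a point $\xi$ strictly between them with $e''(\xi) = 0$. Then, by the fundamental theorem of calculus,
\[
	e''(x^{(k)}) \; = \; e''(x^{(k)}) - e''(\xi) \; = \; \int_{\xi}^{x^{(k)}} e'''(t)\,\mathrm{d}t ,
\]
and invoking the uniform-in-$k$ bound on the third derivatives of $\alpha_\epsilon$ and $\alpha_\epsilon^{\cV_k}$ from Lemma \ref{lemma:diff} part \textbf{(ii)} — say $|e'''(t)| \leq M$ on ${\mathcal B}(x_\ast, \nu_x)$ with $M$ independent of $k$ — one obtains
\[
	\big| \alpha''_\epsilon(x^{(k)}) - [\alpha_\epsilon^{\cV_k}]''(x^{(k)}) \big| \; = \; |e''(x^{(k)})| \; \leq \; M\,|x^{(k)} - \xi| \; \leq \; M\,|x^{(k)} - x^{(k-1)}| ,
\]
since $\xi$ lies between $x^{(k-1)}$ and $x^{(k)}$. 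This establishes the claim with $C = M$.

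The conceptual crux I would be most careful about is justifying the first-derivative interpolation $e'(x^{(k-1)}) = 0$ at the \emph{previous} iterate: this rests on $\cV_k$ containing a right singular vector realizing a rightmost point of $\Lambda_\epsilon(x^{(k-1)})$, which holds through the nesting $\cV_{k-1} \subseteq \cV_k$, together with the nondegeneracy of $x^{(k-1)}$ needed to apply Lemma \ref{thm:hermite} \textbf{(iii)}. Once the two Hermite identities are secured, the remaining Rolle-plus-fundamental-theorem estimate is entirely routine; the genuinely technical ingredients — uniform simplicity of the relevant singular values, uniform invertibility of $\nabla^2_{yy}\cL$ and $\nabla^2_{yy}\cL^{\cV_k}$, and the consequent uniform third-derivative bounds independent of $k$ — have already been discharged in Lemmas \ref{lemma:uniqness} and \ref{lemma:diff}, so no new estimation effort is required here.
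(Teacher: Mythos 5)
Your proof is correct, but it follows a genuinely different route from the paper's. The paper expands both $\alpha_{\epsilon}$ and $\alpha_{\epsilon}^{\cV_k}$ to third order along the segment from $x^{(k)}$ to $x^{(k-1)}$ via Taylor's theorem and subtracts: this isolates $\alpha''_{\epsilon}(x^{(k)}) - [\alpha_{\epsilon}^{\cV_k}]''(x^{(k)})$ in terms of third-derivative remainders, and it consumes the interpolation identities $e(x^{(k-1)}) = e(x^{(k)}) = 0$ (Lemma \ref{thm:hermite} \textbf{(i)} at both iterates) together with $e'(x^{(k)}) = 0$ (Lemma \ref{thm:hermite} \textbf{(iii)} at the current iterate only). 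You instead use $e'(x^{(k-1)}) = e'(x^{(k)}) = 0$ — gradient interpolation at \emph{both} iterates, with value interpolation never entering — then Rolle's theorem to produce $\xi$ with $e''(\xi) = 0$, and the fundamental theorem of calculus with the uniform bound on $e'''$ from Lemma \ref{lemma:diff} \textbf{(ii)}. Both arguments rest on the same two pillars (Hermite interpolation and uniform-in-$k$ third-derivative bounds), so the difference is in which interpolation data is spent and how. Your version is slightly leaner (no Taylor expansions, and you handle the degenerate case $x^{(k)} = x^{(k-1)}$ explicitly, which the paper glosses over), at the mild cost of requiring Lemma \ref{thm:hermite} \textbf{(iii)} — hence nondegeneracy — at the previous iterate $x^{(k-1)}$ as well; the paper needs it only at $x^{(k)}$, since value interpolation at $x^{(k-1)}$ holds unconditionally. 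In the setting of this section this distinction is immaterial, because all iterates are assumed to lie in ${\mathcal B}(x_\ast,\nu_x)$, where Lemma \ref{lemma:uniqness} supplies nondegeneracy of both the full and reduced problems uniformly, exactly as you invoke it.
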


\begin{proof}
		We assume without loss of generality that $x^{(k-1)}$, $x^{(k)}$ are strictly inside ${\mathcal B}(x_\ast, \nu_x)$
		(i.e., the interval in Lemma \ref{lemma:diff}), where $\alpha_\epsilon(x)$ and $\alpha_\epsilon^{\cV_k}(x)$ 
		are real analytic with uniform bounds on their first three derivatives independent of $k$.
		
		Setting $\, h := x^{(k-1)} - x^{(k)} \,$, let us introduce the functions
		\begin{equation}\label{lm:slc1}
			l (t) := \alpha_{\epsilon} (x^{(k)} + t h) \text{ and } l_{k} (t) := \alpha_{\epsilon}^{\cV_{k}}(x^{(k)} + t h)
		\end{equation}
		for $t \in [0,1]$. By applying Taylor's theorem with third order remainder to $l (t)$ and $ l_k(t)$ 
		on the interval $(0,1)$, we obtain
	    	\begin{align*}
			& 	   l (1) 		 \;\;  = \;  		l (0) + l' (0) + l'' (0) /2 + l''' (\eta) /6,\\
			&	    l_{k} (1) 	 \;  = \;	  	l_{k} (0) + l'_{k} (0) + l''_{k} (0) /2 + l'''_{k} (\eta_k)/6
		\end{align*}
	    	for some constants $\eta, \eta_k \in (0,1)$. Notice that $l (1)=l_{k} (1)$,  $l (0)=l_{k} (0)$, and  
		$l'(0)=l'_{k} (0)$ due to Lemma \ref{thm:hermite}. Consequently,
	    	\begin{equation}\label{eq:inter}
			\begin{split}
			\frac{ \alpha''_{\epsilon} (x^{(k)}) h^2  -  [\alpha^{\cV_k}]'' (x^{(k)}) h^2 }{2}
								&	\;	=	\;	\frac{l''(0) - l''_{k} (0)}{2}	\\
								&	\;	=	\;	\frac{l'''_{k} (\eta_k)-l'''(\eta)}{6}	\\
								&	\;	=	\;
			\frac{ [\alpha^{\cV_k}]''' (x^{(k)} + \eta_k h) h^3	-	\alpha'''_{\epsilon} (x^{(k)} + \eta h) h^3   }{6}
				\,	.
			\end{split}
		\end{equation}
	    As $x^{(k)} + \eta_k h, x^{(k)} + \eta h \in {\mathcal B}(x_\ast, \nu_x)$, the third derivatives 
	   $[\alpha^{\cV_k}]''' (x^{(k)} + \eta_k h)$ and $\alpha'''_{\epsilon} (x^{(k)} + \eta h)$ on the righthand
	   side of (\ref{eq:inter}) in absolute value are bounded from above by a uniform constant $U$. 
	   Hence, we deduce from (\ref{eq:inter}) that
	   \[
	   		\big|	\alpha^{''}_{\epsilon} (x^{(k)}) - 
					[ \alpha_{\epsilon}^{\cV_k} ]'' (x^{(k)})	  \big|
							\:	\leq	\:	 \frac{2U}{3}  h	\;	=	\;	\frac{2U}{3}	 \big| x^{(k)}-x^{(k-1)}  \big| \:
	   \]
	   as desired.
\end{proof}

Now we are ready to state and prove the main result of this section, that is the superlinear
convergence result regarding the iterates of Algorithm \ref{alg1}.

\begin{theorem}[Superlinear Convergence]\label{thm:super_converge}
	Suppose that Assumptions \ref{ass:optimality}, \ref{ass:optimality2} and \ref{ass:optimality3} hold. 
	Additionally, assume that
	$\alpha''_\epsilon (x_\ast) \neq 0$.
	Then, there exists a constant $\Upsilon > 0$ independent of $k$ such that
	\begin{equation}\label{slc0}
	\frac{ \big| x^{(k+1)} - x_*  \big| }{ \big| x^{(k)} - x_*  \big| \max \{ \big| x^{(k)} - x_*  \big| ,  \big| x^{(k-1)} - x_*  \big| \}} 	\;	\leq	\;	 \Upsilon. 
	\end{equation}
\end{theorem}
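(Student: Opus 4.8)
The plan is to reduce everything to the first derivatives of the full and reduced pseudospectral abscissa functions, exploiting that $x_\ast$ and $x^{(k+1)}$ are stationary points. Since $x_\ast$ is interior to $\underline{\Omega}$ and, by Lemma~\ref{lemma:diff}, $\alpha_\epsilon$ is real analytic on $\mathcal B(x_\ast,\nu_x)$, optimality of $x_\ast$ gives $\alpha_\epsilon'(x_\ast)=0$. Likewise, $x^{(k+1)}$ is an interior global minimizer of $\alpha_\epsilon^{\cV_k}$, which is real analytic on $\mathcal B(x_\ast,\nu_x)$ by Lemma~\ref{lemma:uniqness}, so $[\alpha_\epsilon^{\cV_k}]'(x^{(k+1)})=0$. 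The strategy is to first bound $\bigl|[\alpha_\epsilon^{\cV_k}]'(x_\ast)\bigr|$ in terms of the previous two errors, and then convert this into a bound on $\bigl|x^{(k+1)}-x_\ast\bigr|$ through a mean value argument combined with a uniform lower bound on $\bigl|[\alpha_\epsilon^{\cV_k}]''\bigr|$ near $x_\ast$.

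For the first bound I would introduce the error function $e_k(x):=\alpha_\epsilon^{\cV_k}(x)-\alpha_\epsilon(x)$, which is real analytic on $\mathcal B(x_\ast,\nu_x)$. Lemma~\ref{thm:hermite} (parts~\textbf{(i)} and~\textbf{(iii)}, applicable since $x^{(k)}$ is nondegenerate by Lemma~\ref{lemma:uniqness}) gives $e_k(x^{(k)})=0$ and $e_k'(x^{(k)})=0$; Lemma~\ref{eq:prox_2der} gives $|e_k''(x^{(k)})|\le C\,|x^{(k)}-x^{(k-1)}|$; and Lemma~\ref{lemma:diff} provides a uniform bound $U'$ on $|e_k'''|$ over $\mathcal B(x_\ast,\nu_x)$. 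Taylor expanding $e_k'$ about $x^{(k)}$ and evaluating at $x_\ast$, and using $e_k'(x^{(k)})=0$ together with $e_k'(x_\ast)=[\alpha_\epsilon^{\cV_k}]'(x_\ast)-\alpha_\epsilon'(x_\ast)=[\alpha_\epsilon^{\cV_k}]'(x_\ast)$, yields
\[
	[\alpha_\epsilon^{\cV_k}]'(x_\ast)
		\;=\;
	e_k''(x^{(k)})\,(x_\ast-x^{(k)})
		\,+\,
	\frac{1}{2}\,e_k'''(\xi_k)\,(x_\ast-x^{(k)})^2
\]
for some $\xi_k$ between $x^{(k)}$ and $x_\ast$. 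Bounding the two terms and using $|x^{(k)}-x^{(k-1)}|\le 2\max\{|x^{(k)}-x_\ast|,|x^{(k-1)}-x_\ast|\}$ then gives $\bigl|[\alpha_\epsilon^{\cV_k}]'(x_\ast)\bigr|\le \widetilde C\,|x^{(k)}-x_\ast|\,\max\{|x^{(k)}-x_\ast|,|x^{(k-1)}-x_\ast|\}$ with $\widetilde C$ independent of $k$.

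To convert this into an error bound, I would use $[\alpha_\epsilon^{\cV_k}]'(x^{(k+1)})=0$ and the mean value theorem to write $[\alpha_\epsilon^{\cV_k}]'(x_\ast)=-[\alpha_\epsilon^{\cV_k}]''(\zeta_k)\,(x^{(k+1)}-x_\ast)$ for some $\zeta_k$ between $x_\ast$ and $x^{(k+1)}$, so that $|x^{(k+1)}-x_\ast|=\bigl|[\alpha_\epsilon^{\cV_k}]'(x_\ast)\bigr|\,/\,\bigl|[\alpha_\epsilon^{\cV_k}]''(\zeta_k)\bigr|$. Here the hypothesis $\alpha_\epsilon''(x_\ast)\neq 0$ enters: by continuity and the uniform third-derivative bound of Lemma~\ref{lemma:diff}, there is $c_0>0$ with $|\alpha_\epsilon''(x)|\ge c_0$ on a neighborhood of $x_\ast$. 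Writing $[\alpha_\epsilon^{\cV_k}]''(\zeta_k)=\alpha_\epsilon''(\zeta_k)+e_k''(\zeta_k)$ and controlling $e_k''(\zeta_k)$ by $|e_k''(\zeta_k)|\le|e_k''(x^{(k)})|+U'|\zeta_k-x^{(k)}|\le C|x^{(k)}-x^{(k-1)}|+U'\bigl(|\zeta_k-x_\ast|+|x_\ast-x^{(k)}|\bigr)$, each term of which is small when the three consecutive iterates are close enough to $x_\ast$, yields $\bigl|[\alpha_\epsilon^{\cV_k}]''(\zeta_k)\bigr|\ge c_0/2$. Combining with the bound of the previous paragraph and setting $\Upsilon:=2\widetilde C/c_0$ gives~\eqref{slc0}.

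The main obstacle is the uniform-in-$k$ lower bound on $\bigl|[\alpha_\epsilon^{\cV_k}]''\bigr|$ near $x_\ast$: Lemma~\ref{eq:prox_2der} controls the second derivative of $e_k$ only at the single interpolation point $x^{(k)}$, so this control must be propagated across the whole neighborhood using the uniform third-derivative bound, while simultaneously leaning on $\alpha_\epsilon''(x_\ast)\neq 0$ to keep $[\alpha_\epsilon^{\cV_k}]''$ away from zero. Once the real-analyticity, the Hermite interpolation of values and first derivatives, and the uniform derivative bounds supplied by the preceding lemmas are in place, the Taylor expansion and the mean value step are routine.
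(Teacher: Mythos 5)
Your proof is correct and takes essentially the same route as the paper's: the paper likewise expands the first derivatives about the interpolation point $x^{(k)}$ (via Taylor's theorem with integral remainder, deferring the details to Part 2 of the proof of Theorem 3.3 in \cite{Aliyev2017}), relying on exactly the ingredients you use --- stationarity of $x_\ast$ and $x^{(k+1)}$, Hermite interpolation from Lemma \ref{thm:hermite}, the second-derivative proximity bound of Lemma \ref{eq:prox_2der}, the uniform third-derivative bounds of Lemma \ref{lemma:diff}, and $\alpha''_\epsilon(x_\ast) \neq 0$ to keep the relevant second derivatives away from zero on a neighborhood of $x_\ast$. Your packaging through the error function $e_k$ and the mean value theorem is only a cosmetic variant of that argument, and in fact writes out explicitly the steps the paper handles by citation.
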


\begin{proof}
	Let $\nu_x$ be as in Lemma \ref{lemma:diff} so that $\alpha_\epsilon(x)$ and $\alpha_\epsilon^{\cV_k}(x)$ are differentiable,
	indeed $\alpha''_\epsilon(x)$ and $[\alpha_\epsilon^{\cV_k}]''(x)$ are Lipschitz continuous, inside ${\mathcal B}(x_\ast, \nu_x)$.
	Moreover, without loss of generality, assume $x^{(k-1)}, x^{(k)}, x^{(k+1)} \in {\mathcal B}(x_\ast, \nu_x)$.

	By assumption $\alpha''_\epsilon (x_\ast) \neq 0$, so, by employing Lemma \ref{eq:prox_2der},
	we can assume $x^{(k-1)}$, $x^{(k)}$ are close enough so that $[\alpha_\epsilon^{\cV_k}]''(x_\ast) \neq 0$.
	Indeed, if necessary by choosing $\nu_x$ even smaller, we can assume $\alpha''_\epsilon(x) \neq 0, \, [\alpha_\epsilon^{\cV_k}]''(x) \neq 0$
	for all $x \in {\mathcal B}(x_\ast, \nu_x)$.

	The superlinear convergence assertion in (\ref{slc0}) now follows by expanding $\alpha'_\epsilon(x_\ast)$
	about $x^{(k)}$ by employing Taylor's theorem with integral remainder. The derivation here is identical to that
	in Part 2 in the proof of \cite[Theorem 3.3]{Aliyev2017} by replacing $\sigma(\omega), \sigma_r(\omega)$
	in that work with $\alpha_\epsilon(x), \alpha_\epsilon^{\cV_k}(x)$, and $\omega_\ast, \omega_{r+1}, \omega_{r}, \omega_{r-1}$
	in that work with $x_\ast, x^{(k+1)}, x^{(k)}$, $x^{(k-1)}$.
\end{proof}

In the context of a related subspace framework for maximizing the smallest eigenvalue 
of a matrix-valued function dependent on one parameter, the order of convergence is shown 
to be at least $1 + \sqrt{2}$ in \cite{Kressner2018}. It is possible that the order of convergence
of Algorithm \ref{alg1} is also better than quadratic, which is suggested by the numerical
experiments we have performed.

\subsection{Extended Subspace Framework}
When $d > 1$ the arguments above leading to the superlinear convergence assertion
for Algorithm \ref{alg1} fails, because the generalization of (\ref{lm:slc0})
that concerns the proximity of the second partial derivatives of $\alpha_{\epsilon}(x)$,
$\alpha^{\cV_k}_\epsilon(x)$ at $x^{(k)}$ does not have to hold.

A remedy to this issue is proposed in Algorithm \ref{alg2}. 
The difference of Algorithm \ref{alg2} compared to Algorithm \ref{alg1}
is that additional right singular vectors at points close to $x^{(k)}$ are also included
when forming the subspace $\cV_k$.
Let $h^{(k)} = \| x^{(k)} - x^{(k-1)} \|$, and $e_{pq} = 1/\sqrt{2} (e_p + e_q)$ if 
$p \neq q$ and $e_{pp} = e_p$, where $e_p$ denotes the $p$th column of the 
$d\times d$ identity matrix. 
To be precise, in lines \ref{a2:in_for}-\ref{a2:in_for_end} of Algorithm \ref{alg2}, a right 
singular vector $v^{(k)}_{pq}$ corresponding to $\sigma_{\min}(A(x^{(k)}_{pq})-z^{(k)}_{pq}I)$ is 
computed at $x^{(k)}_{pq} = x^{(k)} + h^{(k)} e_{pq}$ and $z^{(k)}_{pq}$, a rightmost
point in $\Lambda_{\epsilon}(x^{(k)}_{pq})$, for $p = 1, \dots d$, $q = p, \dots , d$.
In addition to $v^{(k)}$, all of these singular vectors are also included in the subspace $\cV_k$.
Clearly, this approach is practical when there are only a few parameters.
As for Algorithm \ref{alg1}, we assume $\Lambda_{\epsilon}^{{\mathcal V}_1}(x) \neq \emptyset$
for all $x \in \underline{\Omega}$. Under the simplicity assumption on the smallest singular value
at the rightmost points of the $\epsilon$-pseudospectra,
if the initial interpolation points $x^{(1)}_1 , \dots , x^{(1)}_\eta$ in line \ref{alg:init_points_2}
of Algorithm \ref{alg2} are chosen on a uniform grid for $\underline{\Omega}$
sufficiently fine, then an implication of
Theorem \ref{thm:notempty_psa} is that this assumption is met, and, as a result,
$\Lambda^{{\mathcal V}_k}_{\epsilon}(x) \neq \emptyset$
for all $x \in \underline{\Omega}$ for all $k \geq 1$ due to monotonicity.

All of the results and derivations when deducing the superlinear convergence for Algorithm \ref{alg1}
when $d = 1$ hold for Algorithm \ref{alg2} when $d > 1$ with minor modifications. To be specific,
the Hermite interpolation property, that is Lemma \ref{thm:hermite} holds, not only
at $x^{(\ell)}$ but also at nearby $x^{(\ell)}_{pq}$ for $p = 1, \dots , d$, $q = p, \dots , d$.
Moreover, Lemma \ref{lemma:uniqness} extends without modification except that ${\mathcal B}(x_\ast, \nu_x)$
is now not an interval but a ball in ${\mathbb R}^d$, and Lemma \ref{lemma:diff} 
holds after replacing all instances of the derivatives with partial derivatives. 
In the proof of Lemma \ref{eq:prox_2der}, we now assume without loss of generality that
not only $x^{(k)}, x^{(k-1)}$ but also $x^{(k)}_{pq}$ for $p = 1, \dots, d$ and $q = p, \dots , d$,
are contained in the ball ${\mathcal B}(x_\ast, \nu_x)$,
and the proximity result
\[
	\left|
		\frac{\partial^2 \alpha_{\epsilon}}{\partial x_p \partial x_q}(x^{(k)})
				-
		\frac{\partial^2 \alpha^{\cV_k}_{\epsilon}}{\partial x_p \partial x_q}(x^{(k)})
	\right|
		\;	\leq 	\;	C	\| x^{(k)} - x^{(k-1)} \|_2
\]
for a constant $C$ independent of $k$ is obtained by applying Taylor's theorem to
$l_{pq}(t) := \alpha_{\epsilon} (x^{(k)} + t h^{(k)} e_{pq})$ and 
$l_{k,pq} (t) := \alpha_{\epsilon}^{\cV_{k}}(x^{(k)} + t h^{(k)} e_{pq})$ for $p = 1, \dots , d$ and
$q = p, \dots , d$. Finally, the extension of Lemma \ref{thm:super_converge}, that is the main
superlinear convergence result, is stated formally below for Algorithm \ref{alg2} when $d > 1$. 
Its derivation is based on Taylor's theorem with integral remainder, and similar to the proof of 
\cite[Theorem 3.3]{Kangal2018}.
\begin{theorem}\label{thm:super_converge2}
	Let $x_\ast$ be a global minimizer of $\alpha_{\epsilon}(x)$ such that 
	Assumptions \ref{ass:optimality}, \ref{ass:optimality2} and \ref{ass:optimality3} hold, 
	and that $\nabla^2 \alpha_\epsilon(x_\ast)$ is invertible.
	Then the following assertion is satisfied by any three consecutive iterates $x^{(k-1)}, x^{(k)}, x^{(k+1)}$ 
	of Algorithm \ref{alg2}, that are sufficiently close to $x_\ast$: 
	there exists a constant $\Upsilon > 0$ independent of $k$ such that
	\[
	\frac{ \big\| x^{(k+1)} - x_\ast  \big\|_2 }{ \big\| x^{(k)} - x_\ast  \big\|_2 \max \{ \big\| x^{(k)} - x_\ast  \big\|_2 ,  
					\big\| x^{(k-1)} - x_\ast  \big\|_2 \}} 	\;	\leq	\;	 \Upsilon. 
	\]
\end{theorem}

Theorem \ref{thm:gconv1} related to global convergence
extend for Algorithm \ref{alg2}\footnote{To be precise, Theorem \ref{thm:gconv1} 
also holds when the occurrence of Algorithm \ref{alg1} is replaced by Algorithm \ref{alg2}
in the theorem statement.}, 
as we still attain the monotonicity 
$\alpha_\epsilon^{\cV_{\ell_1}}(x) \leq \alpha_\epsilon^{\cV_{\ell_2}}(x) \leq \alpha_\epsilon(x)$
for all $x \in \Omega$ for positive integers $\ell_1 , \ell_2$ such that $\ell_1 \leq \ell_2$, and Hermite 
interpolation at points $x^{(k)}$ with Algorithm \ref{alg2}.


\begin{algorithm}
\begin{algorithmic}[1]
	\REQUIRE{The matrix-valued function $A(x)$, the feasible region $\underline{\Omega} \,$, and $\epsilon > 0$.}
	\ENSURE{An estimate $\widehat{x}$ for $\arg\min_{x\in\underline{\Omega}}\alpha_\epsilon (x)$, and $\widehat{z} \in {\mathbb C}$
				that is an estimate for a globally rightmost point in $\Lambda_{\epsilon}(\widehat{x})$}	
	\vskip .6ex
	\STATE{$x^{(1)}_1, \dots x^{(1)}_\eta \gets$ initially chosen points in $\underline{\Omega}$.}\label{alg:init_points_2}
	\STATE{$z^{(1)}_j \gets \arg\max\left\{\text{Re}(z) \; | \; z \in {\mathbb C} \, \text{ s.t. } \, 
						\sigma_{\min}(A(x^{(1)}_j)-zI)\leq \epsilon \right\}$ for $j = 1, \dots , \eta$.}\label{line:abs1_2}
	\STATE{$v^{(1)}_{j} \gets$ a right singular vector corr. to $\sigma_{\min}(A(x^{(1)}_j)-z^{(1)}_jI)$ for $j = 1, \dots, \eta$.} \label{line:sv1_2}	
	\STATE{$\mathcal{V}_1 \gets$ span$\left\{v^{(1)}_{1} , \dots v^{(1)}_{\eta}\right\} \quad$ and $\quad V_1 \gets$ an orthonormal basis for ${\mathcal V}_1$.}\label{defn:V0_2} 
	\vskip .4ex
	\FOR{$k=2,3,\dots$}
	\vskip .3ex
	\STATE{$x^{(k)}\gets\arg\min_{x\in\underline{\Omega}}\alpha_\epsilon^{{\mathcal V}_{k-1}} (x)$.} \label{siter_start2}
	\vskip .3ex
	\STATE{$z^{(k)} \gets \arg\max\left\{\text{Re}(z) \; | \; z \in {\mathbb C} \, \text{ s.t. } \, 
									\sigma_{\min}(A(x^{(k)})-zI)\leq \epsilon \right\}$.}\label{line:absk2}
	\vskip .4ex
	\STATE{\textbf{Return} $\widehat{x} \gets x^{(k)}$, $\widehat{z} \gets z^{(k)}$ if convergence occurred.} \label{alg:terminate2}
	\vskip .75ex
	\STATE{$v^{(k)} \gets$ a unit right singular vector corresponding to $\sigma_{\min}(A(x^{(k)})-z^{(k)}I)$.}\label{line:svk2} 
	\vskip .6ex
	\STATE{$h^{(k)} \gets \| x^{(k)} - x^{(k-1)} \|$}
	\vskip .6ex
		\FOR{$p = 1, \dots , d$} \label{a2:in_for}
		\vskip .3ex
			\FOR{$q = p, \dots , d$}
			\vskip .3ex
\STATE{$x^{(k)}_{pq} \gets x^{(k)} + h^{(k)} e_{pq}$}
		\vskip .4ex
\STATE{$z^{(k)}_{pq} \gets \arg\max\left\{\text{Re}(z) \; | \; z \in {\mathbb C} \, \text{ s.t. } \, 
									\sigma_{\min}(A(x^{(k)}_{pq})-zI)\leq \epsilon \right\}$.}\label{line:absk3}
		\vskip .4ex
\STATE{$v^{(k)}_{pq} \gets$ a unit right singular vector corresponding 
						to $\sigma_{\min}(A(x^{(k)}_{pq})-z^{(k)}_{pq}I)$.}\label{line:svk3} 
	\vskip .4ex
			\ENDFOR
			\vskip .3ex
	\ENDFOR	\label{a2:in_for_end}
	\vskip .5ex
	\STATE
	{$V_k\gets \orth\left( [ V_{k-1} \;\; v^{(k)} \;\; v^{(k)}_{11} \;\; \dots v^{(k)}_{dd} ]\right) \quad$ {and}  
	$\quad \mathcal{V}_k \gets \Col(V_k)$.} \label{siter_end_2}
	\vskip .5ex
	\ENDFOR
\end{algorithmic}
\caption{The extended subspace framework to minimize $\alpha_{\epsilon}(x)$ over $\underline{\Omega}$}
\label{alg2}
\end{algorithm}

\section{Extensions to Real Pseudospectral Abscissa}\label{sec:r_psa}
The real $\epsilon$-pseudospectrum, and real $\epsilon$-pseudospectral
abscissa of $A \in {\mathbb C}^{n\times n}$ for a given $\epsilon > 0$ are defined by
\bigskip
\begin{center}
$\,
	\Lambda^{\mathbb R}_{\epsilon}(A)
			\,	:=	\,
	\left\{
			z	\in	\C 
				\; | \; 
		z \in \Lambda(A+\Delta) \;\; 
				\exists \Delta\in {\mathbb R}^{n \times n} \; \text{ s.t. } \norm{\Delta}_2 \leq \epsilon \right\}, \,
$
\end{center}
\bigskip
and 
$\alpha^{\mathbb R}_{\epsilon}(A)$ $:= \max\left\{\Real(z) \; | \; z\in \Lambda^{\mathbb R}_{\epsilon}(A) \right\}$.
The $\epsilon$-level-set characterizations \cite{Bernhardsson1998}
\[
	\Lambda^{\mathbb R}_{\epsilon}(A)
			\;	=	\;
		\{ z \in {\mathbb C} \: | \: \mu(z) \leq \epsilon	\},
	\quad\;
	\alpha^{\mathbb R}_{\epsilon}(A)
			\;	=	\;
		\max\{  \Real(z) \: | \: z \in {\mathbb C} \;\,  \text{s.t.}  \;\,  \mu(z) \leq \epsilon	\}	\, ,
\]
where
\begin{equation}\label{eq:red_rpsa}
\begin{split}
	&
	\mu(z)
		:=
	\max_{\gamma \in (0,1]}	\:	g(z, \gamma)	\,	, 	\;\;
		\\
	&		
	g(z, \gamma)
		:=
	\sigma_{-2}
	\left(
		G(z, \gamma)
	\right)	\,	,	\quad	
	G(z, \gamma) :=
		\left[
			\begin{array}{cc}
				A - \Real(z) I		&	-\gamma \Imag(z) I	\\
				\gamma^{-1} \Imag(z) I	&	A - \Real(z) I	
			\end{array}
		\right]	\,	,	
\end{split}
\end{equation}
and $\sigma_{-2}(\cdot)$ denotes the second smallest singular value of its matrix argument
are useful for computational purposes. Fixed-point iterations based on low-rank dynamics
are proposed in the literature for the computation of $\alpha^{\mathbb R}_{\epsilon}(A)$
\cite{Guglielmi2013b, Rostami2015}. Moreover, a criss-cross type algorithm, and a subspace
method for the computation of $\alpha^{\mathbb R}_{\epsilon}(A)$ are introduced
in \cite{Lu2017}.

For a matrix-valued function $A(x)$ defined as in (\ref{eq:affine_mvf}) and
a prescribed $\epsilon > 0$,
it may be of interest to minimize $\alpha^{\mathbb R}_{\epsilon}(A(x))$
over $x$ in a compact set $\underline{\Omega}$ when the uncertainties
are constrained to be real.
A direct extension of Algorithm \ref{alg1}
would minimize a reduced counterpart of $\alpha^{\mathbb R}_{\epsilon}(A(x))$
such as the one in \cite[Section 6]{Lu2017}, and locate the rightmost point $z_\ast$ in
$\Lambda^{\mathbb R}_{\epsilon}(A(x_\ast))$ at the minimizer $x_\ast$ of the
reduced problem. Then the subspace defining the reduced
problem can be expanded with the inclusion of directions defined in terms
of the singular vectors corresponding to the smallest two singular values of
$G(z_\ast , \gamma_\ast)$ as in (\ref{eq:red_rpsa}) with $A = A(x_\ast)$, where
$\gamma_\ast$ is the maximizer of $g(z_\ast,\gamma)$ over $\gamma$.
The monotonicity properties analogous to the ones in Lemma \ref{monotonicity}
are immediate as argued in \cite[Section 6]{Lu2017}.
However, it appears that the interpolation properties are lost with such
a subspace framework; it seems that the interpolation properties do not
have to hold even between the function $\mu$ in (\ref{eq:red_rpsa})
and its reduced counterpart, as well as between the derivative of $\mu$
and that of the reduced counterpart. This is because $\mu$ is the maximum 
of a singular value function rather than merely a singular value function, i.e.,
even if $g(z_\ast,\gamma)$ is interpolated by its reduced counterpart at $\gamma = \gamma_\ast$,
the reduced counterpart of $g(z_\ast,\gamma)$ is not necessarily maximized at $\gamma_\ast$,
implying that the reduced counterpart of $\mu(z)$ may be larger than $\mu(z)$ at $z = z_\ast$.
Extension of Algorithm \ref{alg1} for the large-scale minimization of the real
pseudospectral abscissa, or at least analyzing such an extension,
is not straightforward, and goes beyond the scope of this work.

\section{Numerical Results}\label{sec:num_results}
We have implemented the proposed subspace framework, that is Algorithm \ref{alg1}, 
to minimize $\alpha_{\epsilon}(x)$ in Matlab.
In this section, we perform numerical experiments using this implementation  in Matlab 2020b on an 
iMac with Mac OS~12.1 operating system, Intel\textsuperscript{\textregistered} Core\textsuperscript{\texttrademark} 
i5-9600K CPU and 32GB RAM. Our implementation of the framework 
seems to be converging rapidly even for multiparameter examples.
Before focusing on numerical experiments on synthetic examples
and benchmark examples taken from the \emph{COMP}$l_e ib$ collection \cite{Leibfritz2004}
in Sections \ref{sec:num_syth} and \ref{sec:num_benchmark}, we first summarize a few important
implementation details in the next subsection.

\subsection{Implementation Details}\label{sec:num_implement}

\subsubsection{Initial Interpolation Points}
We choose the initial interpolation points  $x^{(1)}_1, \dots, x^{(1)}_\eta$ in line \ref{alg:init_points} of Algorithm \ref{alg1}
as follows unless they are made available explicitly.  If there are multiple optimization parameters, then
$x^{(1)}_1, \dots, x^{(1)}_\eta$ are selected randomly in $\underline{\Omega}$. Otherwise, if there is only one optimization
parameter, we choose them as equally-spaced points in $\underline{\Omega} := [ L , U]$, i.e., 
$x^{(1)}_j  := L  +  (j-1) \frac{U - L}{\eta - 1}$ for $j = 1, \dots , \eta$. In all of the experiments below, unless otherwise 
stated, the number of initial interpolation points is $\eta = 10$.

\subsubsection{Termination Condition}\label{sec:num_terminate}
In practice we terminate when the gap between the optimal values for the reduced optimization problems 
in two consecutive iterations is less than a prescribed tolerance. Formally, given
a tolerance \texttt{tol}, we terminate at iteration $k \geq 3$ if
\begin{equation}\label{eq:terminate}
	\alpha_\epsilon^{{\mathcal V}_{k-1}} (x^{(k)})	-	\alpha_\epsilon^{{\mathcal V}_{k-2}} (x^{(k-1)})
			\;\;	<	\;\;	\texttt{tol}.
\end{equation}
In practice we use \texttt{tol} $= \: 10^{-7}$. Additionally, we terminate if the number of
subspace iterations exceeds a certain amount, which is 30 iterations in all of the  numerical
experiments below. But this second condition is never needed for the examples here
as the condition in (\ref{eq:terminate}) with \texttt{tol} $= \: 10^{-7}$ is always met in
fewer than 10 iterations.

\subsubsection{Solution of the Reduced Optimization Problems}\label{sec:num_red_prob}
The minimization of $\alpha_\epsilon^{{\mathcal V}_{k-1}} (x)$ over $x \in \underline{\Omega}$ in line \ref{siter_start} 
of Algorithm \ref{alg1} is performed using either ``eigopt'' \cite{Mengi2014} or ``GRANSO'' \cite{Curtis2017}.

If there is only one parameter, then we rely on ``eigopt'' as it converges globally provided a lower bound $\gamma$
on the second derivatives of $\alpha_\epsilon^{{\mathcal V}_{k-1}} (x)$ where it is differentiable is chosen small enough.
In all of the experiments below depending on one parameter, we set $\gamma = -400$, which seems to work well.

On the other hand, if there are multiple parameters, we depend on ``GRANSO'' for the solution of these
reduced optimization problems. The reason is ``eigopt'' is usually slow when there are multiple optimization
parameters, and the number of function evaluations needed to satisfy a certain accuracy increases 
quite rapidly with respect to the accuracy required. ``GRANSO'', on the other hand, can solve multiple-parameter
problems quite efficiently. Yet, as it is based on BFGS (i.e., quasi-Newton methods), it can converge to
a local minimizer that is not a global minimizer. For the numerical examples depending on multiple 
parameters that we report below, this local convergence issue does not cause any problem.

\subsubsection{Objective for the Reduced Optimization Problems and Rectangular Pseudospectra}\label{sec:obj_red_prob}
The objective for the reduced optimization problem at the subspace iteration with loop index $k$ is  
$\alpha_\epsilon^{{\mathcal V}_{k-1}} (x)$ meaning $\alpha_\epsilon^{{\mathcal V}_{k-1}} (x)$
needs to be computed at several $x$. This requires finding the rightmost point of the
rectangular pseudospectrum
\[
	\Lambda_{\epsilon}(A^{V_{k-1}}(x))	\;	=	\;	
	\{ z \in {\mathbb C} \; | \; \sigma_{\min} ( A^{V_{k-1}}(x)  -  z V_{k-1}) \leq \epsilon \}.
\]
In general, finding the rightmost point of a rectangular psedospectrum of the form
\[
	\{ z \in {\mathbb C} \; | \; \sigma_{\min}(E - z F) \leq \epsilon	\}
\]
for given matrices $E, F \in {\mathbb C}^{p\times q}$ with $p > q$ is a challenging
problem. For instance, it does not seem easy to determine how many connected components 
such a rectangular pseudospectrum has, and where the connected components are in
the complex plane.

To find the rightmost point of $\Lambda_{\epsilon}(A^{V_{k-1}}(x))$, 
we adopt the approach proposed in \cite[Section 5.2]{Kressner2014}, which is
an extension of the quadratically convergent criss-cross algorithm for the $\epsilon$-pseudospectral
abscissa of a square matrix \cite{Overton2003} to rectangular pencils. We remark that $A^{V_{k-1}}(x)$ and $V_{k-1}$
are of size $n\times \widetilde{k}$, where $\widetilde{k} := k-2+\eta$. Hence, at first look they are not small scale. Yet, as suggested in \cite[Section 5.2]{Kressner2014},
a reduced QR factorization
\[
	\left[
		\begin{array}{cc}
			V_{k-1}		&		A^{V_{k-1}}(x)
		\end{array}
	\right]
			\;	=	\;
		Q	
	\left[
		\begin{array}{cc}
			\widetilde{B}		&		\widetilde{A}
		\end{array}
	\right]		
\]
yields $\widetilde{A}, \widetilde{B} \in {\mathbb C}^{2\widetilde{k} \times \widetilde{k}}$ such that
\[
	\Lambda_{\epsilon}(A^{V_{k-1}}(x)) 	\: = \:		\Lambda_{\epsilon}(\widetilde{A}, \widetilde{B})	:=	
		\{ z \in {\mathbb C} \; | \; \sigma_{\min} ( \widetilde{A}  -  z \widetilde{B}) \leq \epsilon \}. 
\]
To summarize, to compute $\alpha_\epsilon^{{\mathcal V}_{k-1}} (x)$ the rightmost point of 
$\Lambda_{\epsilon}(\widetilde{A}, \widetilde{B})$ depending on small matrices is found
cheaply using a criss-cross algorithm as in \cite{Overton2003}.

One subtle issue is that it is essential for the criss-cross algorithm to locate the rightmost
point in $\Lambda_{\epsilon}(\widetilde{A}, \widetilde{B})$ to start iterating initially from 
a point in the rightmost connected component of $\Lambda_{\epsilon}(\widetilde{A}, \widetilde{B})$.
As a heuristic, it is proposed in \cite[Section 5.2]{Kressner2014} to initialize the criss-cross
algorithm with the rightmost eigenvalue $\lambda$ of the pencil $L(s) = \widetilde{A}_1 - s \widetilde{B}_1$ 
satisfying $\sigma_{\min} (\widetilde{A} - \lambda \widetilde{B}) \leq \epsilon$, where $\widetilde{A}_1, \widetilde{B}_1$
denote the upper $\widetilde{k} \times \widetilde{k}$ parts of $\widetilde{A}, \widetilde{B}$. The difficulty is
that there may not be such an eigenvalue of $L(\cdot)$ satisfying 
$\sigma_{\min} (\widetilde{A} - \lambda \widetilde{B}) \leq \epsilon$.
As a safeguard, we additionally choose equally-spaced points $y_1, \dots , y_s$ in a prescribed 
subinterval (for the examples below $[-2{\rm i} , 2 {\rm i}]$) of the imaginary axis, then find the
largest $x$ such that $\sigma_{\min}( \widetilde{A} - (x + {\rm i} y_j ) \widetilde{B}) = \epsilon\,$ for $j = 1, \dots, s$. 
Such largest $x$ corresponding to $y_j$, call it $x^{LR}_{j}$, for $j = 1, \dots, s$ is given by the largest
imaginary part attained over the purely imaginary eigenvalues of the pencil \cite[Lemma 5.1]{Kressner2014}
\[
		{\mathcal L}(s)	
			\;	=	\;
		\left[
			\begin{array}{cc}
				-y_j \widetilde{B}^\ast + {\rm i} \widetilde{A}^\ast	&	\epsilon I		\\
				-\epsilon I		&		y_j \widetilde{B} + {\rm i} \widetilde{A}
			\end{array}
		\right]	
			\:	-	\:
		s
		\left[
			\begin{array}{cc}
				\widetilde{B}^\ast	&	0	\\
				0		&		\widetilde{B}	
			\end{array}
		\right]	\,	.
\]
To conclude, we initialize the criss-cross algorithm for rectangular pencils with the 
rightmost point among the points produced by the heuristic described above from \cite[Section 5.2]{Kressner2014}
and $x^{LR}_{j} + {\rm i} y_{j}$ for $j = 1, \dots , s$.

\subsubsection{Finding the Rightmost Point of $\Lambda_{\epsilon}(x)$}
The computation of the rightmost point of 
$\Lambda_{\epsilon}(x^{(1)}_j)$ for $j = 1,2, \dots , \eta$, and 
$\Lambda_{\epsilon}(x^{(\ell)})$ for $\ell = 2, \dots , k$ is required by the subspace framework
to form the subspace ${\mathcal V}_k$. This is usually the most intensive computational task, as it involves
finding the rightmost point in the $\epsilon$-pseudospectral abscissa of the large matrix $A(x)$ at various $x$. 
For this purpose, we either employ the original criss-cross algorithm for the computation of the pseudospectral
abscissa \cite{Overton2003} if the size of $A(x)$ is less than or equal to a prescribed amount, 
or otherwise the subspace framework in \cite{Kressner2014} if the size of $A(x)$ is larger than the prescribed
amount. In all of the experiments below, this prescribed size is chosen as 1000.

\subsection{Synthetic Examples Depending on One Parameter}\label{sec:num_syth}
We first conduct numerical experiments with the synthetic examples used in \cite[Section 7]{Mengi2018},
that are publicly available\footnote{\url{http://home.ku.edu.tr/~emengi/software/max_di/Data_&_Updates.html}}.
These examples concern the distance to instability, which, for a matrix $M \in {\mathbb C}^{n\times n}$, 
is defined by
\[
	\mathcal{D}(M)		\;	:=	\;	\inf \{ \: \| \Delta \|_2 \; | \;	\Delta \in {\mathbb C}^{n\times n}	\text{ s.t. }		\Lambda(M + \Delta ) \cap \mathbb{C}^+	\neq \emptyset	 \: \},
\]
where $\mathbb{C}^+$ denotes the closed right-half of the complex plane. It follows from the definition
of $\mathcal{D}(M)$ that, for every $\epsilon > 0$, we have
\begin{equation}\label{eq:dist_psa}
	\mathcal{D}(M)	\leq	\epsilon		\quad	\Longleftrightarrow		\quad		\alpha_\epsilon(M) \geq 0.
\end{equation}
Each example in \cite[Section 7]{Mengi2018} involves a matrix-valued function of the form
$A(x) = A + x b c^T$ over $x \in {\mathbb R}$ in a prescribed interval for given $A \in {\mathbb R}^{n\times n}$,
$b, c \in {\mathbb R}^n$. Specifically, the distance to instability for each example $A(x)$ is maximized over $x$
in a prescribed interval ${\mathcal I}$, and the maximal value of the distance to instability, as well as the
maximizer are reported. Letting $x_\ast$ be the global maximizer of ${\mathcal D}(A(x))$ over $x \in {\mathcal I}$, 
it is immediate from (\ref{eq:dist_psa}) that
$\,
\min_{x \in {\mathcal I}} \, \alpha_{\epsilon}(A(x))	\,	=	\,	\alpha_\epsilon(A(x_\ast))	 \, =	\, 0 \,
$
for $\, \epsilon =  {\mathcal D}(A(x_\ast))$.

We illustrate the proposed subspace framework  in Figure \ref{fig:x_vs_psa} to minimize $\alpha_\epsilon(x) = \alpha_\epsilon(A(x))$ for the 
example $A(x)$ in \cite[Section 7]{Mengi2018} of order $n = 400$ over $x \in [-0.3,0.2]$, and with 
$\epsilon = 0.12870882$, the reported value of ${\mathcal D}(A(x_\ast))$ in \cite[Section 7]{Mengi2018}. The initial 
subspace ${\mathcal V}_1$ is chosen as the 2-dimensional 
subspace so that Hermite interpolation is attained between $\alpha_{\epsilon}(x)$ and $\alpha^{\mathcal{V}_1}_\epsilon(x)$ at $x = -0.2, 0.1$. 
The reduced function $\alpha^{\mathcal{V}_1}_\epsilon(x)$ is minimized over $x \in [-0.3,0.2]$; its minimizer turns out to be $x^{(2)} = -0.0480905$.
Then the subspace $\mathcal{V}_1$ is expanded into ${\mathcal V}_2$ so that $\alpha^{\mathcal{V}_2}_\epsilon(x)$ interpolates
$\alpha_{\epsilon}(x)$ at $x = x^{(2)}$. The global minimizer of $\alpha^{\mathcal{V}_2}_\epsilon(x)$ is already quite close to the
actual minimizer of $\alpha_\epsilon(x)$ as can be seen in Figure \ref{fig:x_vs_psa}. The subspace framework on this example
terminates after 4 subspace iterations. The iterates of the subspace framework are given in Table \ref{table:s400_conv}. It appears
from the second and third columns of this table that $x^{(k)}$ converges to the minimizer $x_\ast$ 
at a superlinear rate, consistent with the superlinear convergence assertion of Theorem \ref{thm:super_converge}.
Also, as expected, the globally smallest value of $\alpha_\epsilon(A(x))$ is about 0, and the computed global minimizer 
$x_\ast = -0.1056316$ is about the same as the global maximizer of ${\mathcal D}(A(x))$ reported in \cite[Section 7]{Mengi2018}.

Next we apply the subspace framework to minimize $\alpha_{\epsilon}(x)$ for the same example 
but for several values
of $\epsilon$ ranging from $10^{-6}$ to 1. The computed minimal values and minimizers of $\alpha_{\epsilon}(x)$
are listed with respect to $\epsilon$ in Table \ref{table:psa_vs_epsln}. The optimal values appear to be in
harmony with $\alpha_{\epsilon}(x) = 0$ for $\epsilon$ equal to the maximal distance to instability attainable,
which is about 0.12870882.
For smaller values of $\epsilon$, the optimal $\epsilon$-pseudospectral abscissa is negative, whereas,
for larger $\epsilon$ values, it is positive as expected.  We especially remark that the subspace framework
seems to work well for small values of $\epsilon$ such as $10^{-6}$.

\begin{figure}[H]
 \centering
		\hskip -3ex
			\includegraphics[width = .62\textwidth]{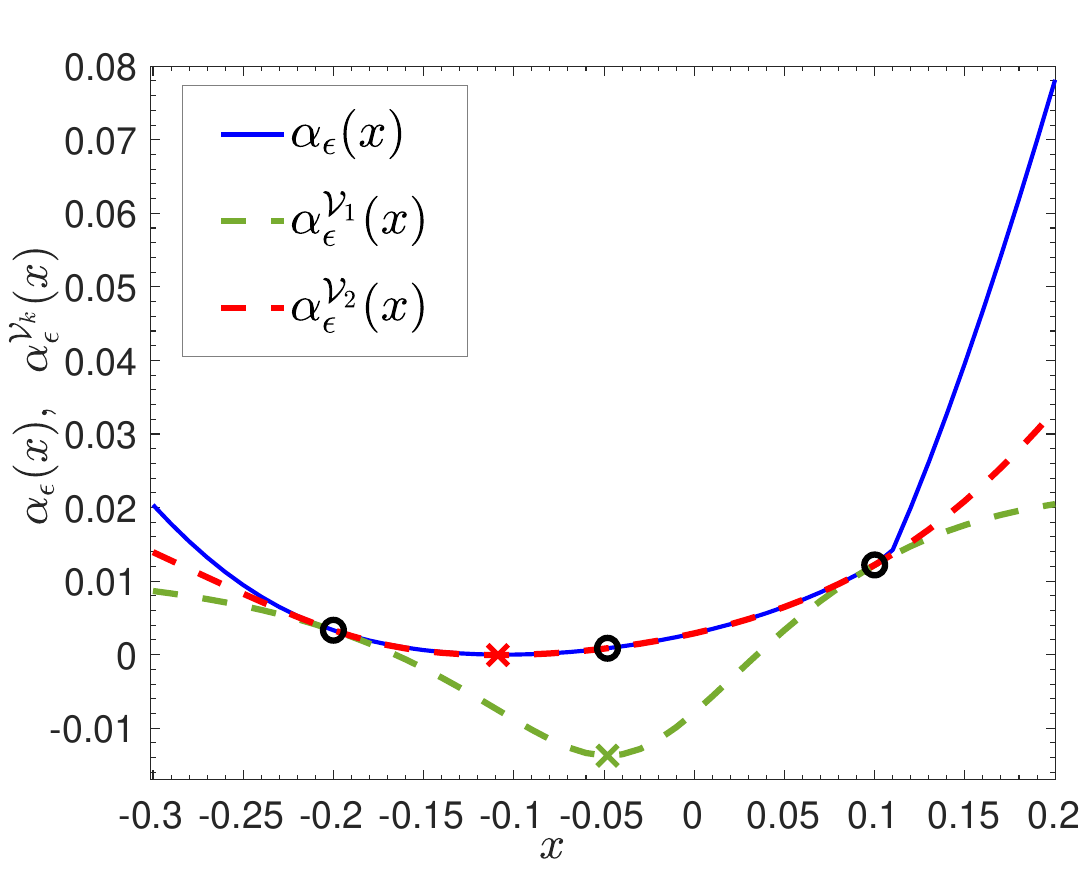} 
		\caption{  The progress of the subspace framework to minimize $\alpha_\epsilon(A(x))$
		over $x \in [-0.3, 0.2]$ for the example  $A(x)$ in \cite[Section 7]{Mengi2018} of order $n = 400$,
		and with $\epsilon$ equal to the computed value of the maximum of ${\mathcal D}(A(x))$ 
		over $x \in [-0.3,0.2]$. The black circles mark the interpolation points,
		while the crosses mark the global minimizers of $\alpha_\epsilon^{{\mathcal V}_1}(x)$ and
		$\alpha_\epsilon^{{\mathcal V}_2}(x)$.
		}
		\label{fig:x_vs_psa}
\end{figure}


\begin{table}[H]
\centering
\caption{ The iterates of the subspace framework to minimize $\alpha_\epsilon(A(x))$
for the example  $A(x)$ in \cite[Section 7]{Mengi2018} of order $n = 400$, and with $\epsilon$
equal to the computed value of the maximum of ${\mathcal D}(A(x))$ over $x \in [-0.3,0.2]$.}
\label{table:s400_conv}
\begin{tabular}{c|ccc}
	$k$  		& \phantom{aa} 	$x^{(k+1)}$ 		&  \phantom{aa}  $ |x^{(k+1)} - x_\ast |$  & 	\phantom{aaaa} $\alpha_\epsilon^{{\mathcal V}_{k}}(x^{(k+1)})$  \\    
	\hline
	1 		& \phantom{aa}  $-$\underline{0}.04808976223  	& \phantom{aa}  0.05754188697		& 	\phantom{aa} $-$\underline{0.0}1372736840 	\\
	2 		&  \phantom{aa} $-$\underline{0.10}883718893 	 &  \phantom{aa} 0.00320553973		& 	\phantom{aa} $-$\underline{0.0000}5010004	\\
	3 		& \phantom{aa}  $-$\underline{0.105631}18290	&  \phantom{aa}  0.00000004663		&	\phantom{aa} $-$\underline{0.0000000}1017	\\
	4 		& \phantom{aa} $-$\underline{0.10563164920}	&  \phantom{aa}  0.00000000000	&	\phantom{aa} $-$\underline{0.00000000753}
\end{tabular}
\end{table}




\begin{table}[H]
\centering
\caption{ The application of the subspace framework to minimize the $\epsilon$-pseudospectral
abscissa of the example $A(x)$ from \cite[Section 7]{Mengi2018} of order $n = 400$ on the interval $[-0.3,0.2]$
for various values of $\epsilon$. The global minimizer $x_\ast$ of $\alpha_{\epsilon}(x) =  \alpha_\epsilon(A(x))$ 
over $x \in [-0.3,0.2]$, 
corresponding minimal value of $\alpha_{\epsilon}(x)$, and number of subspace iterations are listed.}
\label{table:psa_vs_epsln}
\begin{tabular}{cc}
\begin{tabular}{c|ccc}
	$\epsilon$ 		&  	$x_\ast$		&		$\;\;\; \alpha_\epsilon (x_\ast)$		&	iter			\\    
	\hline
 	$10^{-6}$			&	$-$0.26107			&	$-$0.4381593	& 4 		\\			
	$10^{-5}$			&	$-$0.26114			&	$-$0.4380593	& 4 	 	\\			
	$10^{-4}$			&	$-$0.26181			&	$-$0.4370651	& 4		\\			
	$10^{-3}$			&	$-$0.26742			&	$-$0.4276521	& 4	 	\\
	$10^{-2}$			&	$-$0.27941			&	$-$0.3620685	& 3		\\
	$10^{-1}$			&	$-$0.07932			&	$-$0.0731799	& 2	
\end{tabular}
\,
		&	
\begin{tabular}{c|ccc}
	$\epsilon$ 		&  	$x_\ast$		&		$\;\;\; \alpha_\epsilon (x_\ast)$		&	iter			\\    
	\hline
 	$0.12$			&	$-$0.09836			&	$-$0.0215016			& 2 		\\			
	$0.2$			&	$-$0.15639			&	\phantom{$-$}0.1602082	& 2 	 	\\			
	$0.4$			&	$-$0.19641			&	\phantom{$-$}0.5285811	& 2		\\			
	$0.6$			&	$-$0.14273			&	\phantom{$-$}0.8384027	& 2	 	\\
	$0.8$			&	$-$0.11798			&	\phantom{$-$}1.1192159	& 2		\\
	$1$				&	$-$0.10290			&	\phantom{$-$}1.3831928	& 2	
\end{tabular}
\end{tabular}
\end{table}


We have also performed experiments with the examples from \cite[Section 7]{Mengi2018} 
of order $n = 200, 400, 800, 1200, 2000$
that concern the maximization of ${\mathcal D}(A(x))$ on the interval $[-3, 3]$, where $A(x) = A + xbc^T$ for given 
$A \in {\mathbb R}^{n\times n}, b,c \in {\mathbb R}^n$. In each case, we have minimized 
$\alpha_\epsilon (A(x))$ over $x \in [-3,3]$
using the subspace framework for $\epsilon$ equal to the reported maximal value of ${\mathcal D}(A(x))$ 
in \cite[Section 7]{Mengi2018}, which is eight decimal digit accurate.
The results are listed in Table \ref{table:syth_results}. The globally minimal value $\alpha_\epsilon(x_\ast)$ of $\alpha_\epsilon(x)$
is about 0 for each one of these examples as expected. Moreover, the computed global minimizers $x_\ast$ 
of $\alpha_{\epsilon}(x)$
listed in the table are close to those reported in \cite[Section 7]{Mengi2018}. The number of iterations until termination
for each one of the examples is 3 to 6 indicating quick convergence. As for the runtimes, according to the table, the main 
computational task that contributes to the runtime is the computation of the $\epsilon$-pseudospectral abscissa
of $A(x)$, which is required once per iteration as well as to form the initial subspaces. On the other hand, the reduced
optimization problem that involves the minimization of $\alpha_{\epsilon}^{{\mathcal V}_{k-1}}(x)$ at the subspace
iteration with loop index $k$ is relatively cheap to solve. These two computational tasks determine the total runtime. 

The runtimes for
a direct minimization (i.e., via \texttt{eigopt}) of $\alpha_{\epsilon}(x)$ without the subspace framework for minimization
are given in the last column of Table \ref{table:syth_results}. Here, $\alpha_{\epsilon}(x)$ is the objective, and needs
to be computed for several values of $x$. Following the practice used in the subspace framework, $\alpha_{\epsilon}(x)$
is computed directly using the criss-cross algorithm \cite{Burke2003} if the size of $A(x)$ is at most 1000 
(i.e., in the first three cases in the table with $n = 200, 400, 800$). On the other hand, if the size of $A(x)$
is larger than 1000, then the subspace framework in \cite{Kressner2014} is employed to compute $\alpha_{\epsilon}(x)$.
In all cases in the table, the direct minimization of $\alpha_{\epsilon}(x)$ takes considerably more time as 
compared to the minimization of $\alpha_{\epsilon}(x)$ via the proposed subspace framework.

\begin{table}
\centering
\caption{ The table concerns the application of the subspace framework to the examples from \cite[Section 7]{Mengi2018}
of order $n = 200, 400, 800, 1200, 2000$. In each case, we minimize 
$\alpha_{\epsilon}(x) = \alpha_{\epsilon}(A(x))$ over $x \in [-3,3]$,
where $\epsilon$ is the reported largest value of ${\mathcal D}(A(x))$ over $x \in [-3,3]$. 
The computed minimizers of $\alpha_{\epsilon}(x)$ are listed in the column of $x_\ast$.
The last four columns
list the total runtime (time), time for the reduced minimization problems (red), time to compute the rightmost 
point of $\Lambda_{\epsilon}(A(x))$ at various $x$ (psa) for the subspace framework, and time to solve the 
minimization problem directly without using the subspace framework (direct) in seconds.}
\label{table:syth_results}
\begin{tabular}{cc|cccccccc}
	$n$		&	$\epsilon$ 		&  $\;\;\;\; x_\ast$	&	$\alpha_\epsilon (x_\ast)$	&	iter			&  	time  	& 	 red 		&	psa		& 	direct		\\    
	\hline
	200		&	 0.01839422			&	\phantom{$-$}0.09439	&	0	& 5 	 & 	17.7		& 	7		&		10.6		&	50.6		\\			
	400 		&	0.12870882			&	$-$0.10566			&	0	& 4 	 &  	78.9	&	23.5		&		55.1		&	1547.1 	\\			
	800 		& 	0.11545563			&	$-$0.05851			&	0.0000010	& 4	&  	241.3	&	24.7		&		214.9	&	5713.7	\\	
	1200 	& 	0.07941192			&	$-$0.48625			&	0.0000003	& 3	 &     181.4	&	64.1	&		116.2	&	8732.8	\\
	2000		& 	0.08436380			&	$-$0.04766			&	0.0000005	& 6	&	737.9	&	24.5		&		708.5	&	7574.6   
\end{tabular}
\end{table}

\subsection{Benchmark Examples}\label{sec:num_benchmark}
Several benchmark examples for the stabilization by static output feedback problem are provided in the
\emph{COMP}$l_e ib$ collection \cite{Leibfritz2004}. The problems that we consider here
taken from \emph{COMP}$l_e ib$ concern finding a $K \in {\mathbb R}^{m\times p}$ such that 
$A + BKC$ has all of its eigenvalues in the open left-half of the complex plane for given
$A \in {\mathbb R}^{n\times n}$, $B \in {\mathbb R}^{n\times m}$, $C \in {\mathbb R}^{p\times n}$.
Minimizing the $\epsilon$-pseudospectral abscissa of $A + BKC$ over $K$ for a prescribed $\epsilon$ 
has also been suggested in \cite{Leibfritz2004} for the robust stabilization of the system.

We focus on three examples. The first one is about the stabilization of a single-input-single-output 
system so that $K$ is a scalar meaning there is only one parameter to be optimized, whereas
the remaining two examples depend on multiple optimization parameters. Especially, the first and third
examples involve relatively large matrices so that a direct minimization of the $\epsilon$-pseudospectral
abscissa requires considerably more time, which we report below. In all of the examples, $\epsilon$ is 
chosen as 0.2.

\medskip

\textit{\textbf{NN18} (n = 1006, m = p =1).}
This example involves the stabilization of $A + x b c^T$ over $x \in {\mathbb R}$. We minimize
$\alpha_{\epsilon}(x)$ for $\epsilon = 0.2$ over $x \in [-1,1]$ by employing the subspace framework.
The computed global minimizer is $x_\ast = -1$, and the computed globally smallest $\epsilon$-pseudospectral
abscissa is $\alpha_{\epsilon}(x_\ast) = -0.9149600$. The correctness of these computed values
can be verified by looking at Figure \ref{fig:NN18_psa}. Note that the original matrix $A$ is asymptotically
stable, yet its $\epsilon$-pseudospectral abscissa $\alpha_\epsilon(0) = -0.8$ is larger than $\alpha_{\epsilon}(x_\ast)$.
Hence, optimizing $\alpha_\epsilon(x)$ over $x$ yields a system that is more robustly stable compared
to the original system. The subspace framework terminates after 3 subspace iterations, and the iterates
generated are listed in Table \ref{table:NN18_conv}. We also report the runtimes of the subspace framework
in Table \ref{table:NN18_results}. Once again the computation of $\alpha_\epsilon(x)$ required several times
to form and expand the subspaces dominate the runtime, whereas the minimization of the reduced pseudospectral
abscissa functions is computationally much cheaper. The last column of Table \ref{table:NN18_results}
reports the total runtime in seconds for the direct minimization of $\alpha_{\epsilon}(x)$ (i.e., via \texttt{eigopt}), 
where we employ the subspace framework from \cite{Kressner2014} to compute the objective $\alpha_{\epsilon}(x)$;
the direct minimization of $\alpha_{\epsilon}(x)$ takes about 8 times
more computational time compared to that for the proposed subspace framework.

\begin{figure}
 \centering
	\begin{tabular}{ccc}
		\hskip -4.5ex
			\includegraphics[width = .5\textwidth]{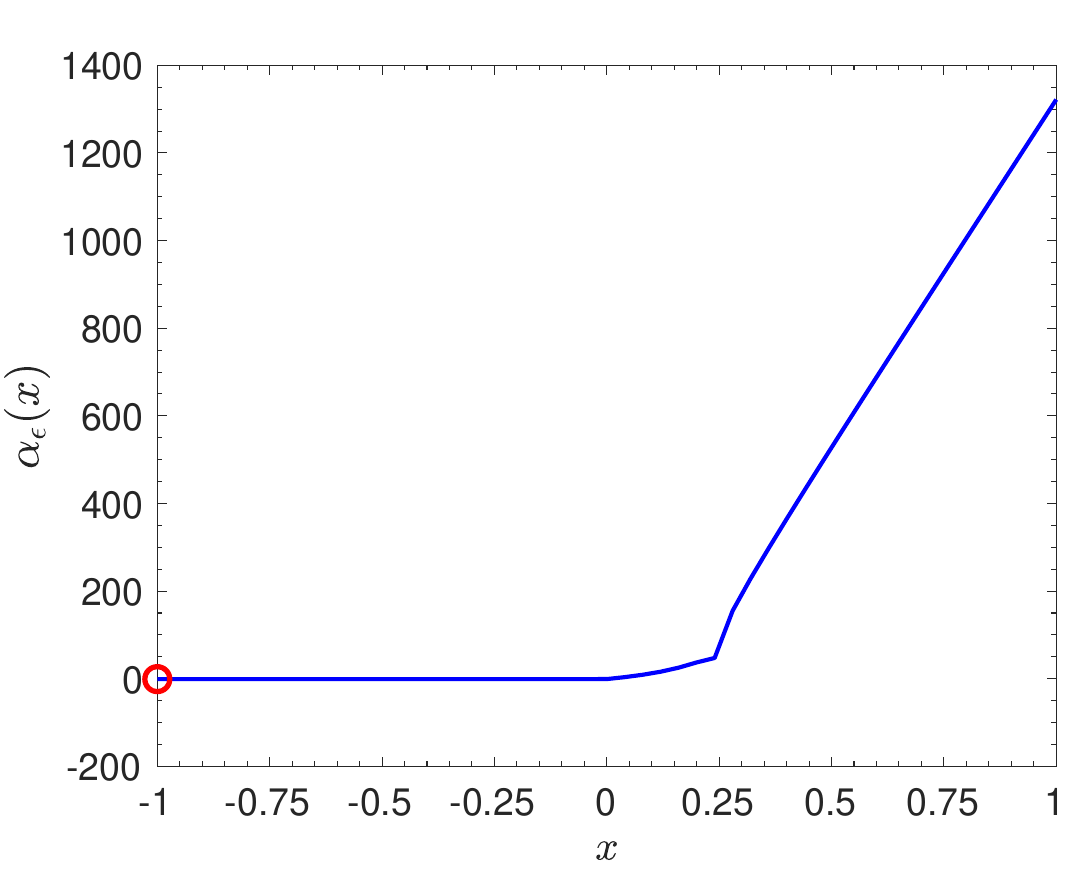} & 
			\includegraphics[width = .5\textwidth]{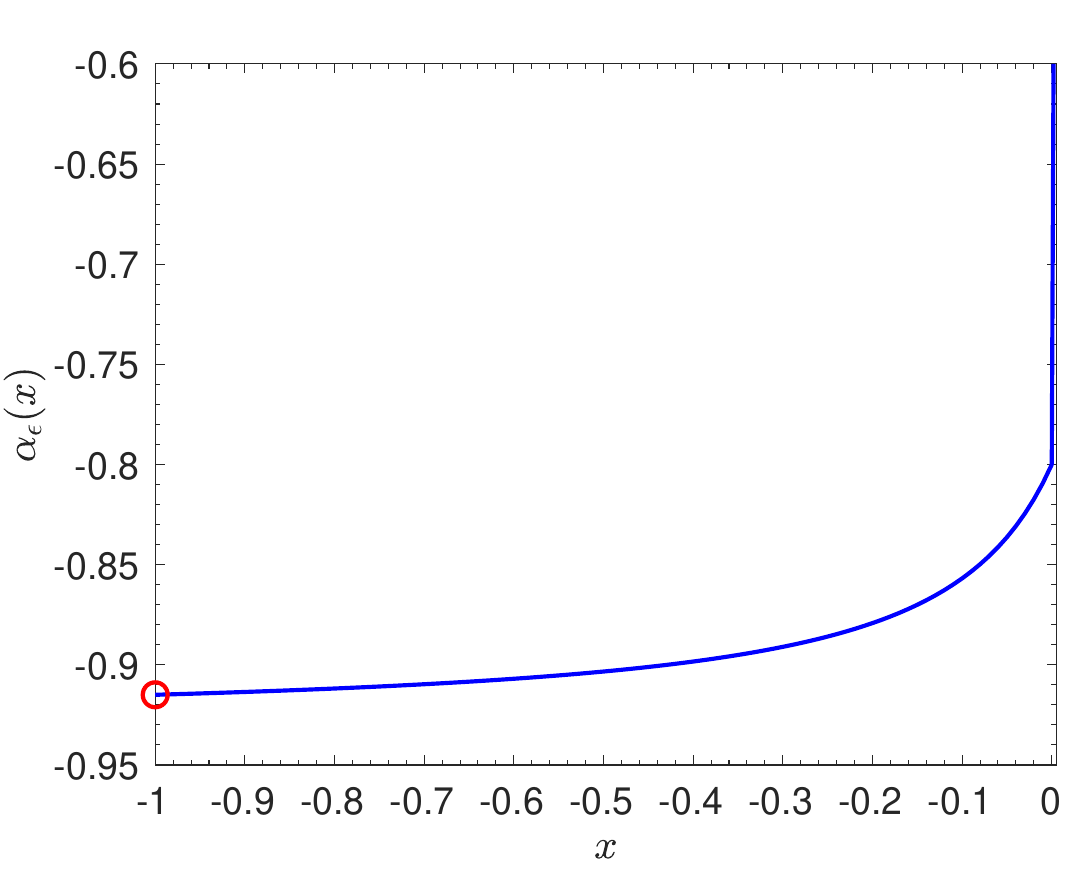} 
	\end{tabular}
		\caption{  The plots of $\alpha_{\epsilon}(x)$ as a function of $x$ for the NN18 example 
		in the \emph{COMP}$l_e ib$ collection. The right-hand plot is a zoomed version of the
		left-hand plot focusing on $x \in[-1, 0.02]$.
		}
		\label{fig:NN18_psa}
\end{figure}

\begin{table}
\centering
\caption{ The iterates of the subspace framework to minimize $\alpha_\epsilon(x)$ over $x \in [-1,1]$
for the NN18 example.}
\label{table:NN18_conv}
\begin{tabular}{c|cc}
	$k$  		& \phantom{aaaaaa} 	$x^{(k+1)}$ 			& 	\phantom{aaaa} $\alpha_\epsilon^{{\mathcal V}_{k}}(x^{(k+1)})$  \\    
	\hline
	1 		& \phantom{aa}  \phantom{$-$}0.23190  	& 	\phantom{aa} $-$1.0680310 	\\
	2 		&  \phantom{aa} $-$1.00000	 			&  	\phantom{aa} $-$0.9149600	\\
	3 		& \phantom{aa}  $-$1.00000				& 	\phantom{aa} $-$0.9149600	
\end{tabular}
\end{table}

\begin{table}
\centering
\caption{ Optimal values for $\alpha_\epsilon(x)$ over $x \in [-1,1]$ for the NN18 example obtained using the subspace framework,
as well as the runtimes for the subspace framework and direct minimization.
The last four columns list runtimes in seconds as in Table \ref{table:syth_results}.}
\label{table:NN18_results}
\begin{tabular}{ccccccc}
			 \phantom{aa$-$} $x_\ast$	&	 \phantom{aa} $\alpha_\epsilon (x_\ast)$	&	 \phantom{aa}  $\alpha_\epsilon(0)$		&  	 \phantom{aa} time  	& 	  \phantom{aa} red 		&	 \phantom{aa} psa		
			 			&	 \phantom{aa} direct		\\    
	\hline
			 \phantom{aa} $-$1	&	 \phantom{aa} $-$0.9149600	 & 	 \phantom{aa} $-$0.8		& 	 \phantom{aa} 22.9		&		 \phantom{aa} 3.9		&		 \phantom{aa} 18.2		&	\phantom{aa} 182.9
\end{tabular}
\end{table}

\textit{\textbf{HF1} (n = 130, m=1, p = 2).}
This example concerns the stabilization of $A + BKC$ with respect to $K \in {\mathbb R}^{1\times 2}$
for given $A \in {\mathbb R}^{130\times 130}, B \in {\mathbb R}^{130}, C \in {\mathbb R}^{2\times 130}$.
We minimize $\alpha_{\epsilon}(x)$ using the subspace framework for $\epsilon = 0.2$ and
$A(x) = A + x_1 B C(1,:)$ $+ x_2 \, B C(2,:)$ over $x \in [-1,1]\times[-1,1]$, where $C(j,:)$ denotes the 
$j$th row of $C$. The subspace framework terminates after two subspace iterations with the optimal value of $x$
as $x_\ast = (-0.36364, -0.26189)$, and the  corresponding minimal value of the $\epsilon$-pseudospectral 
abscissa as $\alpha_\epsilon(x_\ast) = 0.1740919$. 
The accuracy of these computed optimal values
can be verified from Figure \ref{fig:HF1_psa}. The runtimes in Table \ref{table:HF1_results}
again confirm that the total runtime is determined by the time required for the computation of $\alpha_{\epsilon}(x)$
at several $x$.

\begin{figure}
 \centering
	\begin{tabular}{ccc}
		\hskip -4.5ex
			\includegraphics[width = .5\textwidth]{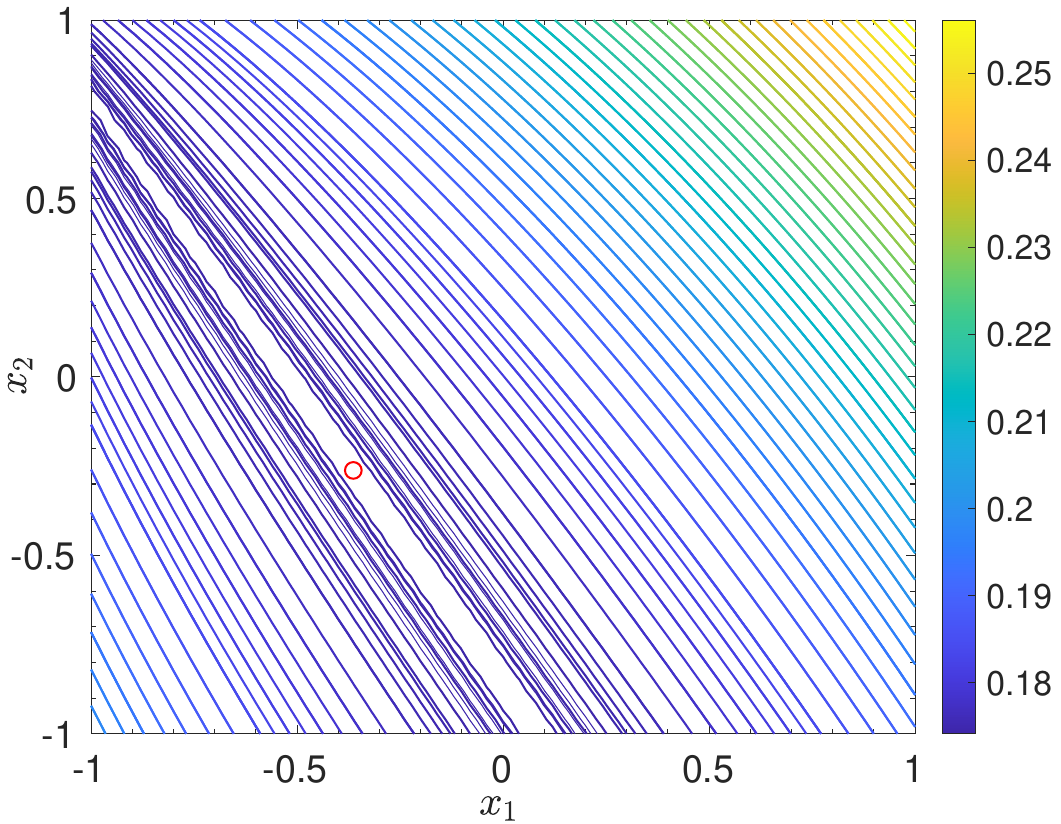} $\,$ &  
			\includegraphics[width = .53\textwidth]{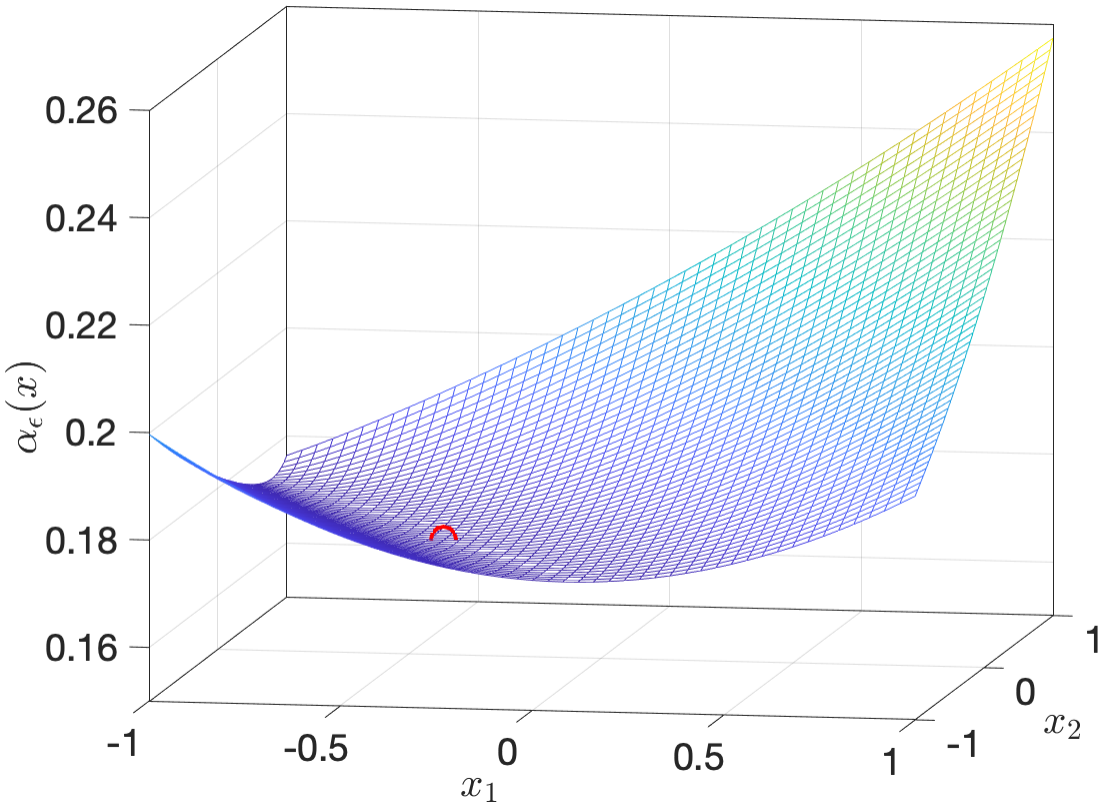} 
	\end{tabular}
		\caption{  The plots of $\alpha_{\epsilon}(x)$ for $\epsilon = 0.2$ as a function of $x$ for the HF1 example
		from the \emph{COMP}$l_e ib$ collection. In each plot, the circle marks the computed global minimizer.
		(Left) Contour diagram of $\alpha_{\epsilon}(x)$.
		(Right) 3-dimensional plot of the map $x \mapsto \alpha_{\epsilon}( x )$. 
		}
		\label{fig:HF1_psa}
\end{figure}

\begin{table}
\centering
\caption{ Optimal values for $\alpha_\epsilon(x)$ over $x \in [-1,1] \times [-1,1]$ for the HF1 example obtained using the subspace framework,
and the runtimes for the subspace framework. The last three columns list the runtimes for the subspace
framework in seconds as in Table \ref{table:syth_results}.}
\label{table:HF1_results}
\begin{tabular}{cccccc}
			 \phantom{i$-$} $x_\ast$	&	 \phantom{aa} $\alpha_\epsilon (x_\ast)$	&	 \phantom{aa}  $\alpha_\epsilon(0)$		&  	 \phantom{aa} time  	& 	  \phantom{aa} red 		&	 \phantom{aa} psa		\\    
	\hline
			 \phantom{i} ($-$0.36364, $-$0.26189)	&	 \phantom{aa} 0.1740919	 & 	 \phantom{aa} 0.1810205		& 	 \phantom{aa} 2.2		&		 \phantom{aa} 0.3		&		 \phantom{aa} 1.8		
\end{tabular}
\end{table}

\textit{\textbf{HF2D2} (n = 3796, m = 2, p = 3).}
This is a large-scale example that arises from a modeling of 2D heat flow \cite[Section 3]{Leibfritz2004}.
A stabilizer $K \in {\mathbb R}^{2\times 3}$ is sought so that $A + BKC$ is asymptotically stable for given
$A \in {\mathbb R}^{3796\times 3796} , B \in {\mathbb R}^{3796\times 2} , C \in {\mathbb R}^{3\times 3796}$.
The original matrix $A$ is unstable with spectral abscissa $0.2556862$, and $\epsilon$-pseudospectral
abscissa for $\epsilon = 0.2$ equal to $0.4625511$.

We express $A + BKC$ in the form $A(x) = A + \sum_{j=1}^6 x_j A_j$, where $x_j = k_{1j}$, $x_{3+j} = k_{2j}$,
and $A_j = B(:,1) C(j,:)$, $A_{3+j} = B(:,2) C(j,:)$ for $j = 1,2,3$, while $B(:,1)$ and $B(:,2)$ represent the 
first and second columns of $B$. We minimize $\alpha_{\epsilon}(x)$ for $\epsilon = 0.2$
with the constraints that $x_j \in [-1,1]$ for $j = 1,\dots, 6$. The subspace framework terminates after 4 subspace
iterations with optimal $x_\ast$ corresponding to the following matrix:
\[
				K_\ast
					\;	=	\;
				\left[
					\begin{array}{ccc}
						1	&	0.33494		&	1	\\
						1	&	-1			&	1	
					\end{array}
				\right].
\]
The resulting matrix $A + B K_\ast C$ is asymptotically stable;
indeed $\alpha_{\epsilon}(x_\ast) = -0.4124020$.
Figure \ref{fig:HF2D2_psa} depicts $\alpha_{\epsilon}(\widetilde{x}^{(j)})$ for five thousand randomly
chosen $\widetilde{x}^{(j)}, \, j = 1, \dots , 5000$. Every random $\widetilde{x}^{(j)}$ here is 
formed so that each one of its six components comes from a uniform distribution
over $[-1,1]$. Computing all these $\epsilon$-pseudospectral abscissa takes
about 41 minutes, yet the smallest value of the $\epsilon$-pseudospectral
abscissa over all these randomly chosen points is $-0.3092196$, significantly
larger than $\alpha_{\epsilon}(x_\ast) = -0.4124020$. As depicted in Figure \ref{fig:HF2DF_plot_psa}, whereas one of the 
components of $\Lambda_{\epsilon}(0)$ is fully on the right-hand side of the complex plane, the rightmost component 
of $\Lambda_{\epsilon}(x_\ast)$ is fully contained in the open left-half of the complex plane.
In terms of the runtime, now the reduced optimization problems take more time than the time to compute 
$\alpha_{\epsilon}(x)$ as reported in Table \ref{table:HF2DF_results}. Yet, the overall runtime for the subspace framework
is quite reasonable considering the system at hand is
of order 3796, and there are several optimization parameters. Once again the time required for the direct minimization
of $\alpha_{\epsilon}(x)$ (where again the objective $\alpha_{\epsilon}(x)$ is computed via the subspace framework
in \cite{Kressner2014}) is significantly more than the overall runtime for the subspace framework.

\begin{figure} 
 \centering
    \includegraphics[width = 1.01\textwidth]{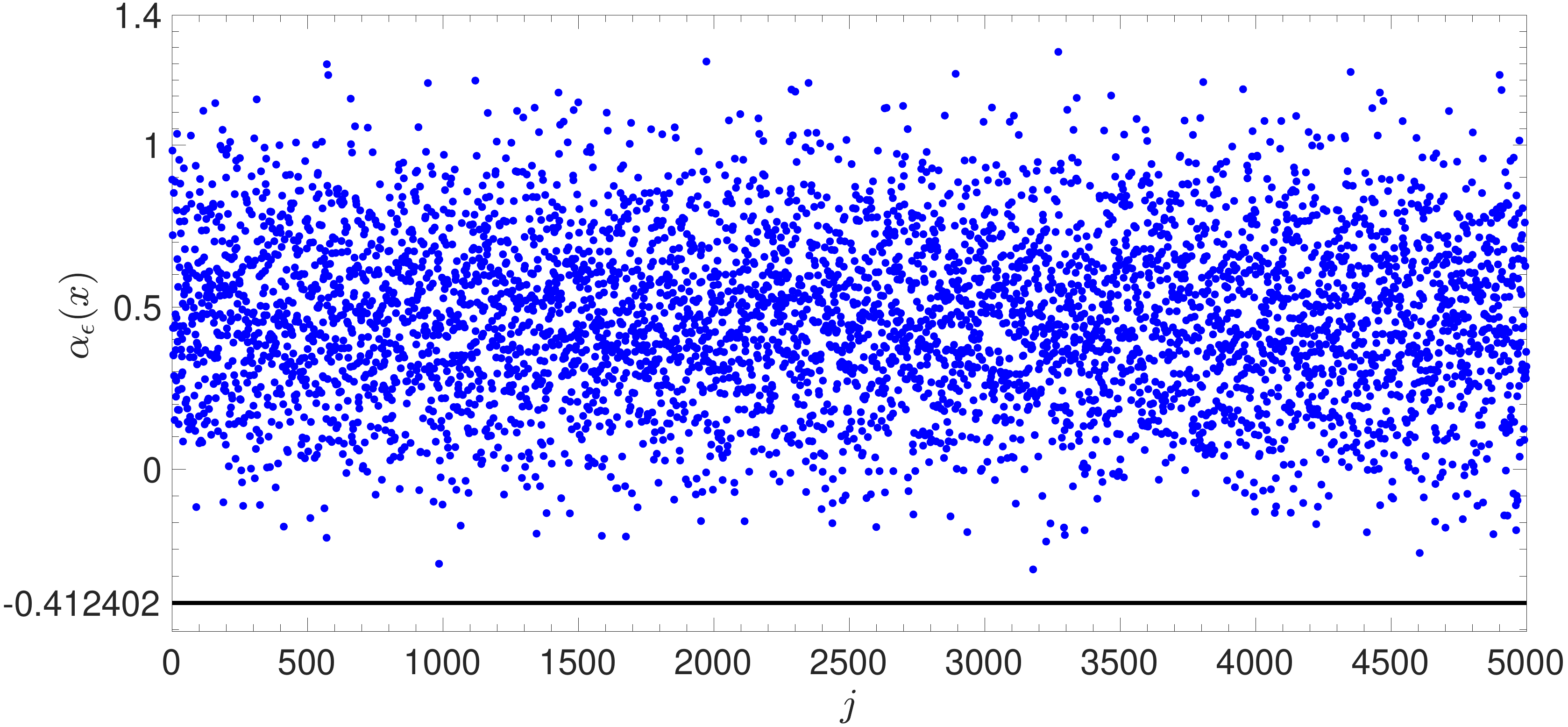} 
    \vskip -2.5ex
		\caption{  
		The plots of $\alpha_{\epsilon}(\widetilde{x}^{(j)})$ for $\epsilon = 0.2$ for the HF2D2 example 
		for randomly chosen five thousand points $\widetilde{x}^{(j)} \in [-1,1]^6$
		for $j = 1, \dots , 5000$. Every dot in the plot corresponds to $(j, \alpha_{\epsilon}(\widetilde{x}^{(j)}))$ for some $j$.   
		The horizontal line at the bottom is $y = \alpha_{\epsilon}(x_\ast) = -0.4124020$,
		where $x_\ast$ is the computed global minimizer. 
		}
		\label{fig:HF2D2_psa}
\end{figure}

\begin{figure}
 \centering
		\hskip 4ex
			\includegraphics[width = .8\textwidth]{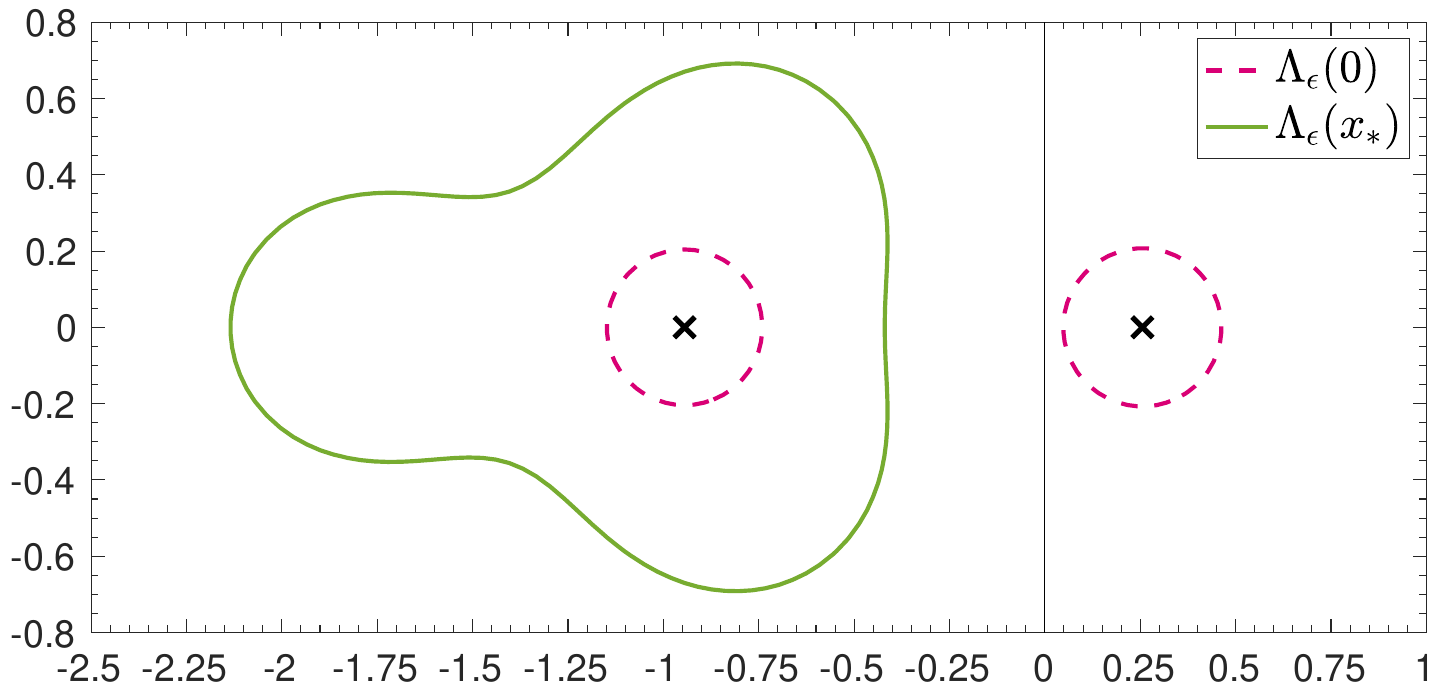} 
		\caption{  The plots of the boundaries of the rightmost components of $\Lambda_{\epsilon}(0)$ and $\Lambda_{\epsilon}(x_\ast)$
		for the HF2D2 example. The crosses mark the rightmost two eigenvalues of $A$, while the vertical
		line represents the imaginary axis. 
		}
		\label{fig:HF2DF_plot_psa}
\end{figure}

\begin{table}
\centering
\caption{ Optimal values of $\alpha_\epsilon(x)$ over $x \in [-1,1]^6$ for the HF2D2 example by the subspace framework,
and the runtimes for the subspace framework, as well as direct minimization. The last four columns are runtimes in seconds 
as in Table \ref{table:syth_results}.}
\label{table:HF2DF_results}
\begin{tabular}{cccccc}
\phantom{aa$-$} $\alpha_\epsilon (x_\ast)$	&	 \phantom{aa}  $\alpha_\epsilon(0)$		&  	 \phantom{aa} time  	& 	  \phantom{aa} red 		&	 \phantom{aa} psa	&	 \phantom{aa} direct	\\    
	\hline
		 \phantom{aa} $-$0.4124020	 & 	 \phantom{aa} 0.4625511		& 	 \phantom{aa} 36.2		&		 \phantom{aa} 25.5		&		 \phantom{aa} 9.6		&		 \phantom{aa}	211.5
\end{tabular}
\end{table}

\section{Software}
A Matlab implementation of the proposed subspace framework, that is Algorithm \ref{alg1},
is publicly available at \url{https://zenodo.org/record/6992092}. This implementation
makes use of all of the implementation details described in Section \ref{sec:num_implement}.
The numerical results on the benchmark examples in Section \ref{sec:num_benchmark}
can be reproduced by running the script \texttt{demo\_on\_benchmarks}. 


\section{Concluding Remarks}\label{sec:conclusion}
Minimization of the $\epsilon$-pseudospectral abscissa of a matrix dependent on
parameters for a prescribed $\epsilon > 0$ is motivated by robust stability and transient
behavior considerations for the associated linear control system, as well as stabilization problems 
such as the stabilization by static output feedback. Here, we have proposed a subspace framework to minimize the
$\epsilon$-pseudospectral abscissa of a large matrix-valued function dependent on parameters
analytically aiming at large-scale nature of the matrix-valued function. The large-scale
matrix-valued functions are restricted to small subspaces, and the $\epsilon$-pseudospectral
abscissa of the resulting reduced small-scale matrix-valued functions is minimized. The subspaces
are gradually expanded so as to attain Hermite interpolation properties between the
$\epsilon$-pseudospectral abscissa of the original and reduced matrix-valued functions
at the minimizers of the reduced problems. If the global minimizers of the reduced problems
stagnate, they stagnate at the global minimizer of the $\epsilon$-pseudospectral abscissa 
of the original matrix-valued function. Additionally, we have shown that the rate of convergence 
of the minimizers of the reduced problems to the global minimizer of the original problem is 
superlinear, under mild assumptions. The validity of these theoretical findings in practice 
is confirmed on synthetic and benchmark examples. The proposed framework makes it feasible 
to minimize the pseudospectral abscissa of matrix-valued functions of size on the
order of thousands, such as the NN18 and HF2D2 examples from the 
\emph{COMP}$l_e ib$ collection, in a short time.

Some of the ingredients in an actual implementation of the overall subspace framework
are locally convergent. Specifically, the subspace framework requires the rightmost point 
of the $\epsilon$-pseudospectrum of the original matrix-valued function at several
parameter values, and if the
matrix-valued function is really large, then it appears for this purpose one has to rely on a 
subspace framework such as the one proposed in \cite{Kressner2014} that converges locally.
Additionally, if the minimization is over several parameters, it appears that
the reduced minimization problems must be solved by employing a locally convergent
optimization algorithm such as those based on a quasi-Newton method, e.g., ``GRANSO'' \cite{Curtis2017}.
There is no quick remedy for these local convergence issues. However, 
on several benchmark examples considered here, the subspace framework indeed converges globally.

The reduced minimization problems require computing the $\epsilon$-pseudospectral
abscissa of rectangular pencils as in (\ref{eq:rec_psa}). We solve at the moment such 
pseudospectral abscissa problems using an extension of the criss-cross algorithm \cite{Overton2003} 
to rectangular pencils \cite[Section 5.2]{Kressner2014}. One major challenge in the
rectangular setting is that one has to start from a point in the rightmost connected component of the
associated rectangular $\epsilon$-pseudospectrum. We have outlined some ideas to overcome
this challenge in Section \ref{sec:obj_red_prob}. Yet, locating the rightmost point
in the $\epsilon$-pseudospectrum, and computing the $\epsilon$-pseudospectral abscissa 
for a rectangular pencil appear to be far from settled. This is a problem worth 
investigating in detail.


\smallskip

\textbf{Acknowledgements.} The authors are grateful to two anonymous referees,
and Mark Embree for their invaluable feedback.

\appendix

\section{Proof of Theorem \ref{eq:psa_derivs}}\label{app:psa_der_proof}
As $\widetilde{x}$ is nondegenerate, there is a unique optimal $z$ for the maximization 
problem in (\ref{eq:opt1}) with $x = \widetilde{x}$, which we denote by $z(\widetilde{x})$. Moreover,
the singular value $\sigma(\widetilde{x}, z(\widetilde{x}))$ is simple, so $\sigma(x, z)$ is
real-analytic at $(x, z) = (\widetilde{x}, z(\widetilde{x}))$.
By the first order optimality conditions applied to the constrained optimization problem 
in (\ref{eq:opt1}) with $x = \widetilde{x}$, there is $\mu(\widetilde{x})$ such that
\[
	\mathcal{L}_{z_1} (\widetilde{x},z(\widetilde{x}),\mu(\widetilde{x})) =  1 - \mu(\widetilde{x}) \sigma_{z_1}(\widetilde{x},z(\widetilde{x})) = 0.
\]
Clearly, $\mu(\widetilde{x})$ is uniquely defined by $\mu(\widetilde{x}) = 1 / \sigma_{z_1}(\widetilde{x},z(\widetilde{x}))$,
and $\mu(\widetilde{x}) \neq 0$, which in turn leads to 
	$\sigma(\widetilde{x}, z(\widetilde{x})) 	=	\epsilon$
by the complementarity conditions.

By continuity, the singular value function $\sigma(x, z)$
remains simple and real-analytic in a neighborhood of $(\widetilde{x}, z(\widetilde{x}))$.
Hence, from a standard sensitivity analysis for optimization problems, there is an open neighborhood $X$ 
of $\widetilde{x}$ such that for every $x \in X$ the optimal $z$ as well as corresponding Lagrange multiplier $\mu$
for the maximization problem in (\ref{eq:opt1}) are unique, and the unique optimizer
$y(x) = (z(x),\mu(x))$ is differentiable at every $x \in X$. From the first order optimality conditions, 
we have
\begin{equation}\label{eq:KKT}
	\nabla_y\mathcal L ( x, y(x)) = 0	\quad	\forall x \in X	.
\end{equation}

Moreover, since $\widetilde{x}$ is nondegenerate, the matrix $\nabla^2_{yy}\mathcal L ( \widetilde{x}, y(\widetilde{x}))$ is nonsingular.
Hence, the analytic implicit function theorem guarantees the existence of a unique real-analytic function
$w : U \rightarrow {\mathbb R}$ satisfying $w(\widetilde{x}) = y(\widetilde{x})$ and
$
	\nabla_y\mathcal L ( x, w(x) ) = 0	\;	\forall 	x \in U
$
in an open neighborhood $U \subseteq X \subseteq {\mathbb R}^d$ of $\widetilde{x}$. The function $w(x)$
is also the unique differentiable function satisfying $
	\nabla_y\mathcal L ( x, w(x) ) = 0	\;	\forall 	x \in U
$ and $w(\widetilde{x}) = y(\widetilde{x})$.
As a result, we have $w(x) = y(x)$ for all $x \in U$. 
Denoting the first component of $z(x) \in {\mathbb R}^2$ with $z_1(x)$,
this shows that $x \in U \mapsto \alpha_{\epsilon}(x) = z_1(x)$ is real-analytic at $x = \widetilde{x}$.

Using (\ref{eq:KKT}) as well as the analytical formulas for the derivatives of singular value functions 
(see, e.g., \cite[Section 3.3]{Mengi2014}), we deduce
\[
	0={\mathcal L}_{z_1}(\widetilde{x},z(\widetilde{x}),\mu(\widetilde{x})) 
	= 1-\mu(\widetilde{x}) \Real\left(u^*\frac{\partial}{\partial z_1} [ A(x,z) ] \bigg\vert_{\widetilde{x},z(\widetilde{x})} v\right) = 1+\mu(\widetilde{x}) \Real(u^*v)
\] 
and 
\[
	0={\mathcal L}_{z_2}(\widetilde{x},z(\widetilde{x}),\mu(\widetilde{x}))
			= 
	-\mu(\widetilde{x}) \Real\left(u^*\frac{\partial}{\partial z_2} [ A(x,z) ] \bigg\vert_{\widetilde{x},z(\widetilde{x})}v\right) = -\mu(\widetilde{x}) \Imag(u^*v),
\]
where $u,v$ consist of a pair of consistent unit left and right singular vectors corresponding to $\sigma(\widetilde{x},z(\widetilde{x})).$
The first equation implies $\Real(u^* v) \neq 0$. Furthermore, as $\mu(\widetilde{x}) \neq 0$, the second equation gives rise to $\Imag(u^*v)=0$,
that is $u^*v$ is real and so $u^* v = \Real(u^* v)$. It follows from the first equation that
\[
  		\mu(\widetilde{x})   \;  =   \;   -\frac{1}{u^*v}.
\]

From a classical sensitivity analysis result, in particular the fact that the gradient with respect
to the parameters of the
optimal value of an optimization problem dependent on parameters is equal to the gradient of the 
Lagrangian with respect to the parameters, we deduce 
\begin{equation}\label{eq:firstder2}
\nabla\alpha_\epsilon(x) = \nabla_x\cL(x,y(x))
\end{equation}
for every $x \in U$.
More specifically, denoting the $j$th component of $x$ with $x_j,$ we have
\begin{equation*}
\frac{\partial}{\partial x_j} [\alpha_\epsilon (x)] \bigg|_{\widetilde{x}}=
-\mu(x) \frac{\partial}{\partial x_j} [\sigma(x,z)] \bigg|_{\widetilde{x}, z(\widetilde{x})}.
\end{equation*}
Again, using the analytical formula for the singular value function, 
and $\mu(\widetilde{x}) = -1/ (u^* v)$, which is real and nonzero, we obtain (\ref{eq:formula_1der}).

For the second derivatives, we differentiate (\ref{eq:firstder2}) by applying the chain rule
to obtain
\begin{equation}\label{eq:secondder}
\nabla^2\alpha_\epsilon(x)   \;  =   \; 
\nabla^2_{xx}\cL(x,y(x)) + \nabla^2_{xy}\cL(x,y(x))\cdot y'(x)  \,\,   , 
\end{equation}
where $y'(\cdot)$ denotes the $3\times d \,$ Jacobian matrix of $y$ with respect to $x$.
Differentiating \eqref{eq:KKT} with respect to $x$ yields
\begin{equation*}
\nabla_{yx}^2\cL(x,y(x)) + \nabla^2_{yy}\cL(x,y(x))\cdot y'(x)
			\;\;	=	\;\;	0	\;	.
\end{equation*}
In particular, as $\widetilde{x}$ is nondegenerate, 
$\nabla^2_{yy}\cL(\widetilde{x},y(\widetilde{x}))$ is invertible, so
\begin{equation}\label{eq:jacob}
\begin{split}
y'(\widetilde{x}) 		& \; = \; 	-[\nabla^2_{yy}\cL(\widetilde{x},y(\widetilde{x}))]^{-1}\nabla^2_{yx}\cL(\widetilde{x},y(\widetilde{x})) 	 \\ 
		& \; = \; 	-[\nabla^2_{yy}\cL(\widetilde{x},y(\widetilde{x}))]^{-1}[\nabla^2_{xy}\cL(\widetilde{x},y(\widetilde{x}))]^T.
\end{split}
\end{equation}
Substituting the right-hand side of \eqref{eq:jacob} for $y'(\widetilde{x})$ in \eqref{eq:secondder} with $x = \widetilde{x}$, 
we obtain (\ref{eq:psa_2der}).
\hfill $\square$

\section{Proof of Theorem \ref{thm:notempty_psa}}\label{app:nonempty_psa_proof}
Without loss of generality, throughout this proof we assume $\| v \| = 1$.
Let $V$ be a matrix whose columns form an orthonormal basis for ${\mathcal V}$.
As $v \in {\mathcal V}$, there exists a unit vector $a$ such that $v = V a$, and so
\begin{equation*}
	\begin{split}
	\epsilon 	\,	\geq	\,
	\sigma_{\min}(A(\widetilde{x}) -  \widetilde{z} I)		\,	& =	\,	\| (A(\widetilde{x}) - \widetilde{z} I)	V a \|_2  \\
				\,	& \geq		\,	\sigma_{\min}(A^V (\widetilde{x}) - \widetilde{z} \, V).
	\end{split}	
\end{equation*}
This shows that $\widetilde{z} \in \Lambda^{\mathcal V}_{\epsilon}(\widetilde{x})$.

Indeed, from part \textbf{(i)} of Lemma \ref{monotonicity}, we also have 
$\sigma_{\min}(A(\widetilde{x}) -  \widetilde{z} I)	 \leq \sigma_{\min}(A^V (\widetilde{x}) - \widetilde{z} \, V)$,
as well as
$\sigma_{\min}(A(\widetilde{x}) -  \widetilde{z} I)	 = \epsilon$, as argued in the first paragraph of the 
proof of Theorem \ref{eq:psa_derivs} in Appendix \ref{app:psa_der_proof}. Hence, we deduce
\[
	\sigma_{\min}(A(\widetilde{x}) -  \widetilde{z} I)	= 
	\sigma_{\min}(A^V (\widetilde{x}) - \widetilde{z} \, V)  =  \epsilon .
\]

Now set $\, u := (A(\widetilde{x}) -  \widetilde{z} I) v / \epsilon \,$ so that $u$, $v$ form
a consistent pair of unit left, right singular vectors corresponding to $\sigma_{\min}(A(\widetilde{x}) -  \widetilde{z} I)$.
As $\sigma_{\min}(A(\widetilde{x}) -  \widetilde{z} I)$ is simple and nonzero, $\sigma_{\min}(A(x) -  z I)$
is differentiable at $(x, z) = (\widetilde{x}, \widetilde{z})$.
From the arguments in the proof of Theorem \ref{eq:psa_derivs}, 
the inner product $u^\ast v$ is real and nonzero.
Indeed, denoting the real part of the complex variable $z$ with $z_1$, as $\widetilde{z}$ is a rightmost point 
in $\Lambda_{\epsilon}(\widetilde{x})$, and from the formulas for the derivative of a singular value
of a matrix dependent on parameters, we must have
\[
	0	\; 	\leq	\;
	\frac{\partial}{\partial z_1}  [ \sigma_{\min}(A(x) -  z I) ] \bigg|_{\widetilde{x}, \widetilde{z}}
		\;	=	\;
	\Real
	\left(
		u^\ast
		\frac{\partial}{\partial z_1} [ A(x) -  z I ] \bigg|_{\widetilde{x}, \widetilde{z}}
		v
	\right)	
		\;	=	\;
		-u^\ast v	 .
\]
As $u^\ast v \neq 0$, we deduce $-u^\ast v > 0$.

We aim to show $\sigma_{\min}(A^V(\widetilde{x})- \widetilde{z} \,V)$ is also simple.
For the sake of contradiction, suppose otherwise.
Then there exist two unit orthonormal vectors $a,\widetilde{a}$ that satisfy
\begin{eqnarray*}
	\sigma_{\min}(A(\widetilde{x})-\widetilde{z}I)	&	=	&	
		\sigma_{\min}(A^{V}(\widetilde{x})-\widetilde{z} \,V)		\\
								&	=	&	
			\| (A^{V}(\widetilde{x})-\widetilde{z} \,V)a \|_2 = \,  
			\| (A^{V}(\widetilde{x})- \widetilde{z} \, V)\widetilde{a} \|_2.
\end{eqnarray*}
But then $\omega=V a$ and $\widetilde{\omega}=V \widetilde{a} \,$ are orthonormal vectors satisfying
\[
	\sigma_{\min}(A(\widetilde{x})- \widetilde{z} I)
		\,	=	\,
	\| (A(\widetilde{x})-\widetilde{z} I)\omega \|_2
		\,	=	\,
	\| (A(\widetilde{x})-\widetilde{z} I)\widetilde{\omega} \|_2,
\]
which contradicts with the simplicity of  $\sigma_{\min}(A(\widetilde{x})-\widetilde{z}I)$. 
Hence, $\sigma_{\min}(A^{V}(\widetilde{x})-\widetilde{z} \, V)$ must be simple.
Consequently, $\sigma_{\min}(A^{V}(x) - z V)$ is differentiable at 
$(x,z) = (\widetilde{x}, \widetilde{z})$.
	
\smallskip

Next we claim $u,a$ are a pair of consistent unit left and unit right singular vectors corresponding to 
$\sigma_{\min}(A^{V}(\widetilde{x}) - \widetilde{z} \, V) = \epsilon$. The equations 
$(A(\widetilde{x}) - \widetilde{z}I)v=\epsilon u$ and $u^\ast(A(\widetilde{x}) - \widetilde{z}I)=\epsilon v^\ast$
can be rewritten in terms of $a$ as
\[
	(A^{V}(\widetilde{x}) - \widetilde{z} \,V)a
		=
	\epsilon u \qquad \text{and} \qquad u^\ast (A^{V}(\widetilde{x}) - \widetilde{z} \,V) = \epsilon a^\ast, 
\]
confirming the claim. It follows that
\begin{equation*}
	\begin{split}
		\frac{\partial}{\partial z_1}  [ \sigma_{\min}(A^V(x) -  z V) ] \bigg|_{\widetilde{x}, \widetilde{z}}
		\;	& =	\;
	\Real
	\left(
		u^\ast
		\frac{\partial}{\partial z_1} [ A^V(x) -  z V ] \bigg|_{\widetilde{x}, \widetilde{z}}
		a
	\right)	\\
		& =	\;
	-\Real
	\left(
		u^\ast
		Va
	\right)
		\; =	\;
	- u^\ast v > 0	\:	.
	\end{split}
\end{equation*}

Letting $\sigma^{\mathcal V}(x,z) := \sigma_{\min}(A^{V}(x) - z \,V)$,
we now would like to show that $\sigma^{\mathcal V}(\widetilde{x},  \widehat{z})$ is strictly less than $\epsilon$
for some $\widehat{z} \in {\mathbb C}$ located on the left-hand side of $\widetilde{z} \in {\mathbb C}$,
by exploiting  $\sigma^{\mathcal V}(\widetilde{x},  \widetilde{z}) = \epsilon$ and 
$\frac{\partial}{\partial \, z_1} \sigma^{\mathcal V}(x,z) \, \big|_{\widetilde{x}, \widetilde{z}} > 0$
shown in the previous paragraph.
To this end, by an argument analogous to \cite[Proposition 2.9]{Kangal2018},
the second derivatives of $\sigma^{\mathcal V}(x,z)$ in a neighborhood of $(\widetilde{x}, \widetilde{z})$
are bounded from above by a constant independent of the subspace ${\mathcal V}$. Hence, 
letting $\xi := \frac{\partial}{\partial \, z_1} \sigma^{\mathcal V}(x,z) \, \big|_{\widetilde{x}, \widetilde{z}} = -u^\ast v > 0$ 
and $\widetilde{z}_1$, $\widetilde{z}_2 \,$ denote the real, imaginary parts of $\widetilde{z}$,
there is $\nu_1 \in {\mathbb R}, \nu_1 > 0$
independent of ${\mathcal V}$ such that 
\[
	\frac{\partial}{\partial z_1} [\sigma^{\mathcal V}(\widetilde{x},z)]	\geq \xi/2
	\quad \forall z =  z_1 + {\rm i} \widetilde{z}_2 \;\: \text{ s.t. } z_1 \in {\mathcal B}(\widetilde{z}_1, \nu_1) .
\] 
This in turn implies
\begin{equation}\label{eq:rsval_ub}
\sigma^{\mathcal V}(\widetilde{x},  \widehat{z})	\leq	\epsilon - \frac{\xi \nu_1}{4},
\end{equation}
where $\widehat{z} := (\widetilde{z}_1 - \nu_1/2) + {\rm i} \widetilde{z}_2$ as desired.

Moreover, $\sigma^{\mathcal V}(x,\widehat{z})$ is uniformly Lipschitz continuous with respect to $x$
\cite[Lemma 2.1]{Kangal2018}, that is there exists $\gamma \in {\mathbb R}, \gamma > 0$
independent of ${\mathcal V}$ such that
\begin{equation}\label{eq:rsval_uniLip}
	\sigma^{\mathcal V}(x,\widehat{z}) - \sigma^{\mathcal V}(\widetilde{x},\widehat{z})
		\;	\leq	\;	\gamma \cdot \| x - \widetilde{x} \|_2
		\quad	\forall x  \in \Omega	.
\end{equation}
Combining (\ref{eq:rsval_ub}) and (\ref{eq:rsval_uniLip}), we deduce
\[
	\sigma^{\mathcal V}(x,  \widehat{z})	\leq	\epsilon
	\quad	\forall x \in {\mathcal B}\big( \widetilde{x},  \frac{\xi \nu_1}{4 \gamma} \big) \cap \Omega	.
\]
Hence, $\widehat{z} \in \Lambda^{\mathcal V}_{\epsilon}(x)$
for all $x \in {\mathcal B}(\widetilde{x},  \frac{\xi \nu_1}{4\gamma}) \cap \Omega$.

Finally, suppose for every $x \in \underline{\Omega}$ there is a rightmost point 
$\zeta(x) \in \Lambda_{\epsilon}(x)$ such that 
$\sigma_{\min}(A(x) - \zeta(x) I)$
is simple. In the arguments of the previous paragraph, we now take $\xi$ as the minimum
of $-u^\ast v$ for $u, v$ that are consistent unit left, right singular vectors corresponding
to $\sigma_{\min}(A(\widetilde{x}) - \zeta(\widetilde{x}) I)$ over all $\widetilde{x} \in \underline{\Omega}$.
The quantity $\nu_1$ in (\ref{eq:rsval_ub}) and 
$\gamma$ in (\ref{eq:rsval_uniLip}) are also uniform over all $\widetilde{x} \in \underline{\Omega}$.
Hence, the arguments of the previous paragraph show that, for any $\widetilde{x} \in \underline{\Omega}$
such that a right singular vector corresponding to $\sigma_{\min}(A(\widetilde{x}) - \zeta(\widetilde{x}) I)$
is in ${\mathcal V}$, we have 
$\widehat{z}(\widetilde{x}) := (\zeta_1(\widetilde{x}) - \nu_1/2) 
		+ {\rm i} \zeta_2(\widetilde{x})  \in \Lambda^{\mathcal V}_{\epsilon}(x)$
$\: \forall x \in {\mathcal B}(\widetilde{x},  \frac{\xi \nu_1}{4\gamma}) \cap \Omega$
with $\zeta_1(\widetilde{x}), \zeta_2(\widetilde{x})$ denoting
the real, imaginary parts of $\zeta(\widetilde{x})$.




\hfill $\square$

\bibliography{na_references}

\end{document}